\documentclass{amsart}

\usepackage[margin=4.2cm]{geometry}
\usepackage{amsmath,amssymb,amsfonts,amsthm}
\usepackage[usenames,dvipsnames]{color}
\usepackage{mathtools}
\usepackage[all,cmtip]{xy}
\usepackage{enumerate}
\usepackage[colorlinks=true,linkcolor=red,citecolor=blue]{hyperref}

\newtheorem{theorem}{Theorem}[section]
\newtheorem{proposition}[theorem]{Proposition}
\newtheorem{lemma}[theorem]{Lemma}
\newtheorem{corollary}[theorem]{Corollary}

\theoremstyle{definition}
\newtheorem{definition}[theorem]{Definition}

\theoremstyle{remark}
\newtheorem{remark}[theorem]{Remark}
\newtheorem{example}[theorem]{Example}

\newcommand{\PP}{\mathbb{P}}
\newcommand{\MT}{\mathcal{MT}}

\newcommand{\lmod}{\!\!\!\mod}
\newcommand{\can}{\mathrm{can}}
\newcommand{\QQ}{\mathbb{Q}}
\newcommand{\LL}{\mathbb{L}}
\newcommand{\RR}{\mathbb{R}}
\newcommand{\CC}{\mathbb{C}}
\newcommand{\ZZ}{\mathbb{Z}}
\newcommand{\GG}{\mathbb{G}}

\newcommand{\Pe}{\mathcal{P}}
\newcommand{\mm}{\mathfrak{m}}

\newcommand{\Or}{\mathcal{O}}

\newcommand{\s}{\mathsf{s}}
\newcommand{\vol}{\mathrm{vol}}
\newcommand{\To}{\longrightarrow}
\newcommand{\eps}{\varepsilon}
\newcommand{\A}{\mathcal{A}}

\newcommand{\B}{\mathrm{B}}
\newcommand{\dR}{\mathrm{dR}}
\renewcommand{\min}{\backslash}
\renewcommand{\c}{\mathrm{c}}

\DeclareMathOperator{\sv}{\mathsf{sv}}

\DeclareMathOperator{\id}{id}

\title{Single-valued integration  and double copy}

\author{Francis Brown}
\address{All Souls College, Oxford, Oxford OX1 4AL, UK}
\email{francis.brown@all-souls.ox.ac.uk}

\author{Cl\'{e}ment Dupont}
\address{Institut Montpelli\'{e}rain Alexander Grothendieck, Universit\'{e} de Montpellier, CNRS, Montpellier,  France}
\email{clement.dupont@umontpellier.fr}

\subjclass[2010]{11G50, 11G55, 14F40, 14H81, 30E20}

\date{}

\begin{document}

\maketitle

\begin{abstract}  We study a single-valued integration pairing between differential forms and dual differential forms which subsumes some classical  constructions in mathematics and physics.
It can be interpreted as a $p$-adic period pairing at the   infinite prime. 
  The single-valued integration pairing is defined by transporting the action of complex conjugation from singular to de Rham cohomology via the comparison isomorphism. We  show   how quite general families of period integrals admit canonical single-valued versions and prove some general formulae for  them.  This implies an elementary   `double copy' formula expressing certain singular volume integrals over the complex points of a smooth projective variety as a quadratic expression in ordinary period integrals of half the dimension. We provide
several examples, including non-holomorphic modular forms, archimedean N\'{e}ron--Tate heights on curves, single-valued multiple zeta values and polylogarithms.
The results of the present paper are used in \cite{BD2} to prove a recent conjecture of Stieberger which relates the coefficients in a Laurent expansion of two different kinds of periods of twisted cohomology on the moduli spaces of curves $\mathcal{M}_{0,n}$ of genus zero with $n$ marked points. 
We also study a morphism between certain rings of `motivic' periods, called the de Rham projection,
which provides a bridge  between complex periods and single-valued periods in many situations of interest.
\end{abstract}

\section{Introduction}
Classical theories of integration concern integrals of closed differential forms $\omega$ over a  domain of integration  $\gamma$:
\begin{equation}\label{introIdef} I = \int_{\gamma} \omega    \ .
\end{equation} In the setting of algebraic geometry, integration can be interpreted as  a canonical   pairing between de Rham cohomology and singular homology for algebraic varieties. 
When the varieties depend algebraically on parameters  one typically obtains multi-valued functions of the parameters.

In this paper, we study a pairing between de Rham cohomology and its dual, de Rham homology. It assigns a number to a differential form and a `dual differential form'. When they depend on parameters,  the pairing is a    \emph{single-valued function} of the parameters, hence the name `single-valued integration'. 
The only  extra ingredient of the construction, in addition to the usual integration of forms over cycles (\ref{introIdef}), is complex conjugation (the `real Frobenius'). Single-valued integration  can in this way be   interpreted as a $p$-adic period pairing at the   infinite prime. 

This idea clarifies  many classical single-valued functions in mathematics and physics, but can also generate new objects when applied to familiar examples. 
In the sequel to this paper we apply our results  to the  case of the moduli space of curves of genus zero to settle a  recent question in string perturbation theory.

\subsection{Framework} \label{sect: Framework} Let $k$ be a field which admits an embedding $\sigma: k \hookrightarrow \CC$. Typically, we shall take $k=\QQ$, $k$ a general number field, or $k=\CC$. We  work in  the following setting:

\vspace{0.1in}
$(\star)_k$ \quad  : \quad  Let    $X$  be a smooth projective variety of dimension $n$ over   $k$, and let  $A, B\subset X$ be divisors with no common irreducible component such that $A\cup B$ is a normal crossing divisor in $X$.
\vspace{0.1in}

By resolution of singularities, any period 
in the sense of Kontsevich--Zagier \cite{kontsevichzagier} can be expressed as a period of the relative cohomology of  $(\star)_k$ for $k$ a number field. More precisely, let 
$$H_{\dR}= H^r_{\dR}(X \backslash A, B \backslash (A \cap B))  \qquad \qquad (\hbox{or, for short }  H_{\dR}^r(X \backslash A \lmod B))$$
denote relative algebraic de Rham cohomology.  It is a finite dimensional vector space over $k$. Let us denote the relative Betti or singular cohomology by 
$$H_{\B,\sigma}= H_{\mathrm{sing}}^r(X_\sigma\backslash A_\sigma(\CC), B_\sigma\backslash (A_\sigma \cap B_\sigma)(\CC)\,;\,\QQ) \qquad  (\hbox{or, }  H_{\B,\sigma}^r(X \backslash A \lmod B))  \ ,$$
where we use the notation $X_\sigma=X\times_{k,\sigma}\CC$, and so on.
It is a finite dimensional vector space over $\QQ$, dual to the singular homology
of $X_{\sigma}\backslash A_{\sigma}(\CC)$ relative to $B_{\sigma}(\CC)$.  According to de Rham \cite{derhamthese} and Grothendieck \cite{Grothendieck}, integration of algebraic differential forms over cycles  (\ref{introIdef}) can be interpreted as a canonical  and natural comparison isomorphism
$$\mathrm{comp}_{\sigma,\dR}  :   H_{\dR} \otimes_{k,\sigma} \CC \overset{\sim}{\To} H_{\B,\sigma} \otimes_{\QQ} \CC \ .$$
Let $\overline{\sigma} : k \hookrightarrow \CC$ denote the complex conjugate embedding.   Since complex conjugation is continuous, it defines  a natural  $\QQ$-linear isomorphism called the real Frobenius isomorphism:
$$F_{\infty}: \, H_{\B,\sigma} \overset{\sim}{\To} H_{\B,\overline{\sigma}}\ .$$
The \emph{single-valued period map}  is defined to be  the $\CC$-linear isomorphism
$$\s_{\sigma} \ : \   H_{\dR} \otimes_{k,\sigma} \CC \To   H_{\B,\sigma}  \otimes_{\QQ} \CC \overset{F_{\infty} \otimes \id}{\To} H_{\B,\overline{\sigma}}\otimes_{\QQ} \CC \To H_{\dR} \otimes_{k, \overline{\sigma}} \CC  $$ 
where the first and third isomorphisms are   $\mathrm{comp}_{\sigma,\dR}$ and $\mathrm{comp}^{-1}_{\overline{\sigma},\dR}$.
The map $\s_{\sigma}$  defines a $(\sigma,\overline{\sigma})$-bilinear pairing between de Rham cohomology classes and  de Rham homology classes, defined to be elements of the dual 
$ H_{\dR}^{\vee}$. It is typically transcendental, and quite different from the natural duality pairing between de Rham homology and cohomology (which takes values in $k$).
It satisfies analogues of all the usual rules for integration including change of variables formulae,  a  version of Stokes' theorem, and so on. This requires a certain formalism of matrix coefficients in Tannakian categories (\S\ref{subsec: tannakian intro}). 
\medskip

The single-valued periods of $H_{\B/\dR}$ are defined to be the entries of the matrix of the single-valued period map in any rational basis of $H_{\dR}$. They generate an interesting class of numbers (and functions when $X,A,B$ depend algebraically on parameters) with many applications.  Examples include: the single-valued   polylogarithms \cite{ganglzagier}, multiple zeta values \cite{brownSVMZV}, regulators \cite{beilinsondeligne}, and non-holomorphic modular forms \cite{brownCNHMF3}. They often arise  in physics since physical problems tend to have well-defined (as opposed to multi-valued) solutions. Examples include string amplitudes \cite{stiebergersvMZV, StiebergerTaylor}, Regge limits \cite{Dixon2012}, and the theory of graphical functions  \cite{schnetzgraphical}.   
Furthermore, 
 single-valued periods are invariants of the de Rham periods which occur in the de Rham coaction (\S\ref{sect: dR periods}) on `motivic' periods. 

\begin{example}\label{example log intro} Let $a\in \CC^{\times}\backslash \{1\}$  and let $\gamma$ be a path from $1$ to $a$. The logarithm 
$$\log a = \int_{\gamma}  \frac{dz}{z}$$
is a multi-valued function of $a$ whose monodromy lies in $2\pi i\, \ZZ$. This is due to the ambiguity in the choice of path $\gamma$. 
It is a period of (the family  over $\GG_m\backslash \{1\}$)  $$H^1(\PP^1 \backslash \{0,\infty\} \lmod \{1,a\})$$ which admits a de Rham basis  $[\frac{dz}{a-1}]$,
$[\frac{dz}{z}]$ and a  Betti basis  $[\gamma]$, $[\gamma_0]$, where $\gamma_0$ is a small positive loop around the origin.  With respect to these bases, the comparison isomorphism can be written as the period matrix 
$$P  =  \begin{pmatrix}  \int_{\gamma}  \frac{dz}{a-1}  &  \int_{\gamma}  \frac{dz}{z}  \\  \int_{\gamma_0}  \frac{dz}{a-1}   & \int_{\gamma_0}  \frac{dz}{z}  \end{pmatrix}=  \begin{pmatrix}  1   &  \log a \\  0  & 2  \pi i \end{pmatrix} \cdot
$$In   the same de Rham basis,  the single-valued map is represented by the matrix
$$ \overline{P}^{-1} P =    \begin{pmatrix}  1   &  \log |a|^2 \\  0  & -1\end{pmatrix} \cdot $$
Thus the single-valued period associated to the logarithm is $\log |a|^2$, and the single-valued version of $2\pi i$ is $-1$. 
 See \S\ref{par: example lefschetz} and \S\ref{par:example log} for a more detailed discussion.
\end{example} 

\begin{example}
The previous example has a natural generalisation where $\mathbb{P}^1$ is replaced with a smooth projective curve $X$ of genus $g$. The relevant cohomology group is thus 
$$H=H^1(X\backslash\{a_1,a_2\},\{b_1,b_2\})$$
where $a_1,a_2,b_1,b_2$ are four distinct points of $X$.  
The corresponding single-valued periods are related to the archimedean N\'{e}ron--Tate height pairing of the degree zero divisors $(a_2)-(a_1)$ and $(b_2)-(b_1)$ on $X$. More precisely, we prove in \S\ref{par: Greens} that this pairing arises as a quotient of a single-valued period of $\Lambda^{g+1}H$ by a single-valued period of $\Lambda^gH^1(X)$.
\end{example}

 \begin{remark}\label{rem p adic} The single-valued period map is a variant at the infinite prime of a $p$-adic period map where the role of Betti cohomology and the real Frobenius are played by crystalline cohomology and the crystalline Frobenius, respectively. In the case $k=\QQ$ we get for all primes of good reduction a $p$-adic period map, which is a $\QQ_p$-linear map
 $$\mathrm{per}_p: H_\dR\otimes_\QQ\QQ_p \longrightarrow H_{\mathrm{crys}}\stackrel{F_p}{\longrightarrow} H_{\mathrm{crys}} \longrightarrow H_\dR\otimes_\QQ\QQ_p\ .$$
 (Note that this is different from and more elementary than Fontaine's period maps which involve $p$-adic \'{e}tale cohomology and more complicated period rings \cite{fontainecorps}.) In the setting of Example \ref{example log intro}, for $a\in\mathbb{Z}$ not divisible by $p$, the $p$-adic period matrix equals
 $$\left(\begin{matrix} 1 & \log_p(a^{1-p}) \\ 0 & p\end{matrix}\right)$$
 where $\log_p$ denotes the $p$-adic logarithm \cite[\S 2.10]{deligneP1}. In particular, the $p$-adic period corresponding to $2\pi i$ is $p$ for all primes $p$.
 \end{remark}

For the remainder of this introduction we now restrict to the case where $\sigma=\overline{\sigma}$ is a real embedding and drop the reference to $\sigma$ in the notations. In this case the single-valued map $\s$ is defined over $\RR$, i.e., defines an $\RR$-linear automorphism of $H_\dR\otimes_k\RR$, and  admits a simple Tannakian description (\S\ref{subsec: tannakian intro}).

\subsection{Duality and formulae} 
The above definition of the single-valued period can be difficult to compute since one needs to know the entire period matrix. A related problem is how to represent de Rham homology classes, which are unfamiliar.
 
 Both issues can be resolved using  duality. 
 Recall that classical Poincar\'{e} duality gives a perfect pairing on the cohomology of a compact oriented manifold and allows one to interpret homology classes as cohomology classes of complementary degree.
 In order to extend this to the periods of mixed, as opposed to only pure, motives, we require a version with singularities (Theorem \ref{thm: PDNCD}). It states that there is a natural perfect pairing 
 \begin{equation} \label{introPVduality} \langle \quad  , \quad \rangle :  H^r(X \backslash B\lmod A)  \otimes  H^{2n-r}(X \backslash A \lmod B)  \To \QQ (-n)\ .
\end{equation}
In this manner, we can interpret de Rham homology classes as classes of  differential forms with  different singularities. This leads to a different perspective on single-valued periods as a pairing between classes of  differential forms:
 \begin{eqnarray} \label{introsvonforms}
H^r_\dR( X \backslash B \lmod A) \otimes H^{2n-r}_\dR( X \backslash A \lmod B)  &\To &  \RR  \\
\left[\nu\right]\otimes \left[\omega\right] & \mapsto &   (2 \pi i)^{-n}   \langle [\nu], \s [\omega] \rangle . \nonumber
\end{eqnarray}

Our first theorem gives a formula for this pairing when $\omega$ and $\nu$ have logarithmic singularities. Let us fix $r=n$ for simplicity and work with global algebraic differential $n$-forms $\omega$ and $\nu$ with logarithmic singularities along $A$ and $B$ respectively (see Theorem \ref{thm:sv recipe general} and Theorem \ref{thm:sv recipe} for more general and precise statements). Note that $\omega$ is a differential form on $X\min A$ which is automatically closed and vanishes along $B$ for degree reasons, and likewise $\nu$ with $A$, $B$ interchanged.

\begin{theorem} \label{introtheorem2}   The single-valued period (\ref{introsvonforms}) is computed by the following absolutely convergent integral:
\begin{equation}  \label{introtheorem2eq}   (2\pi i)^{-n}  \int_{X(\CC)}  \nu\wedge \overline{\omega} \ .
\end{equation}
\end{theorem}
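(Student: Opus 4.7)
The plan is to unwind the definition of $\s$ on representative forms and then invoke the form-theoretic description of the Poincar\'{e}--Lefschetz pairing (\ref{introPVduality}). By Deligne's theory of the logarithmic de Rham complex, the class $[\omega]\in H^n_\dR(X\min A\lmod B)$ is represented by the holomorphic log form $\omega$ itself on $X(\CC)\min A(\CC)$, with vanishing pullback to $B(\CC)\min (A\cap B)(\CC)$ for degree reasons (an $(n,0)$-form pulls back to zero on any complex submanifold of complex dimension $n-1$). Under $\mathrm{comp}_{\sigma,\dR}$ this class maps to the Betti class $[\omega]_\B$ represented by the same smooth form; analogously for $\nu$.

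Next, the real Frobenius $F_\infty$ is induced by the action of complex conjugation $c:X(\CC)\to X(\CC)$ on singular cochains, which on smooth-form representatives acts by pullback: $F_\infty[\omega]_\B=[c^*\omega]_\B$. Since $X$, $A$ and $B$ are defined over the real field $k$, a local computation in $k$-rational coordinates yields $c^*\omega=\overline{\omega}$, the form obtained by complex-conjugating coefficients and differentials. Since the Poincar\'{e}--Lefschetz pairing (\ref{introPVduality}) is compatible with the comparison isomorphism, it suffices to compute the pairing on the Betti side: being induced by cup product followed by the trace on $H^{2n}_\B(X)$, on smooth representatives it is wedge product followed by integration. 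Combining these steps, and using the identification $\QQ(-n)\otimes\CC\cong\CC$ employed in (\ref{introsvonforms}),
\[
\langle[\nu],\,\s[\omega]\rangle \;=\; \langle[\nu]_\B,\,[\overline{\omega}]_\B\rangle_\B \;=\; \int_{X(\CC)}\nu\wedge\overline{\omega},
\]
and multiplication by $(2\pi i)^{-n}$ yields the theorem. Absolute convergence follows from standard bounds: near a normal-crossing stratum with local equation $z_1\cdots z_k=0$, the integrand is dominated by a smooth factor times $\prod_i \bigl|\log|z_i|\bigr|^{a_i}$, which is locally Lebesgue integrable on $\CC^n$.

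The main technical obstacle is justifying the form-theoretic computation of the Poincar\'{e}--Lefschetz pairing when the representatives carry logarithmic singularities rather than compact support. A clean route is to approximate $\nu$ and $\omega$ by compactly supported $C^\infty$ forms on $X(\CC)\min (A\cup B)(\CC)$, obtained by multiplication by cutoff functions equal to $1$ outside a shrinking tubular neighborhood of $A\cup B$; to verify, using Deligne's quasi-isomorphism between the log complex and the $C^\infty$ de Rham complex on the open complement, that these cutoffs represent the same relative Betti classes modulo coboundaries whose contribution to the pairing vanishes in the limit; to apply the classical wedge-and-integrate formula in the compactly supported case; and to pass to the limit using dominated convergence provided by the integrability bound above.
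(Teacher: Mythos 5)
Your skeleton is the same as the paper's (Theorem \ref{thm:sv recipe general}): represent the classes by global logarithmic forms, use functoriality of the comparison isomorphism and $\mathrm{conj}^*\omega=\overline{\omega}$ to identify $\s[\omega]$ with $[\overline{\omega}]$, and reduce to computing the duality pairing \eqref{introPVduality} by wedging and integrating. The gap is in that last reduction, which is where essentially all the content lies. Your integrability bound is incorrect: near a stratum the integrand is not dominated by a smooth form times $\prod_i\bigl|\log|z_i|\bigr|^{a_i}$. Because one of the two forms has a logarithmic pole along each component while the other vanishes there, the local factors of $\nu\wedge\overline{\omega}$ are $\frac{dz_i\wedge d\overline{z}_i}{\overline{z}_i}$ and $\frac{z_i}{\overline{z}_i}\,d\overline{z}_i$, i.e.\ of size $|z_i|^{-1}$ — still locally integrable on $\CC$, but the relevant structural fact is that $\nu\wedge\overline{\omega}$ becomes \emph{smooth} on the real oriented blow-up $\widetilde{X}$ of $X(\CC)$ along the components of $A\cup B$ (``polar-smooth''), and this precise local normal form is what the proof must exploit.

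More seriously, the cutoff argument is asserted rather than carried out. The form $\chi_\varepsilon\omega$ is not closed ($d(\chi_\varepsilon\omega)=d\chi_\varepsilon\wedge\omega$), so it does not represent a class; producing a closed, compactly supported representative cohomologous to $[\omega]$ in $j_!\A^\bullet$ introduces correction terms concentrated on the $\varepsilon$-tube, and showing that their contribution to the pairing against $\nu$ tends to zero is exactly the hard point. It is not a consequence of dominated convergence: it is a Stokes-type boundary computation on $\{|z_i|=\varepsilon\}$, and the paper's Remark \ref{rem: higher order poles} stresses that such limits are in general sensitive to the shape of the tubular neighbourhoods. The paper resolves this by working on $\widetilde{X}$ as a manifold with corners, proving that $\nu\mapsto\int_X\nu\wedge(-)$ is a morphism of complexes via Stokes for manifolds with corners, and checking by hand that the restrictions of the two local normal forms above to the boundary faces $\rho_i=0$ vanish (Proposition \ref{prop:PD simpler log forms conjugate}). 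A smaller but real gap: identifying $H^n_{\dR}(X\min A\lmod B)$ with classes of global log forms vanishing along $B$ is not Deligne's quasi-isomorphism for a single open complement; the relative two-divisor version needs the commutation $j_!Rj_*\simeq Rj_*j_!$ of Proposition \ref{prop: commutejj} and a separate Poincar\'e-lemma computation for the twisted complex $\A^\bullet_X(\log A\cup B)(-B)$ (Proposition \ref{prop:5qis}).
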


The subtle point here is that the form $\nu\wedge\overline{\omega}$ is not a smooth form on $X(\CC)$. For example,  one cannot simply differentiate under the integral with respect to a parameter,  nor can one apply  Stokes' theorem in a naive manner (these operations are possible, however, see \S\ref{par:sv functoriality}).

This theorem allows for an  interpretation of certain $dz \wedge d\overline{z}$ integrals as single-valued periods. Since the single-valued period homomorphism on de Rham periods  has  a
large kernel  this imposes severe restrictions on the 
types of numbers and functions which can occur as such integrals. 

Although (\ref{introsvonforms}) is defined in full generality,  the integral  (\ref{introtheorem2eq}) does not converge in the general case when 
$\omega$, $\nu$ have higher-order poles.  Indeed Felder and Kazhdan   have recently studied similar kinds of integrals  using  a version of Riemann's mapping theorem in families \cite{felderkazhdanRiemann}  or by a zeta regularisation \cite{felderkazhdanregularization}.   We do not know how the latter definition compares to ours.  In any case, we can avoid these subtle issues since every  cohomology class may be represented by a form with logarithmic singularities.

From the previous theorem we deduce the following relation between elementary integrals.

\begin{corollary} \label{introcorDC} (Double copy formula). In the setting of the previous theorem:
\begin{equation}  \label{introDC}  \int_{X(\CC)}  \nu\wedge \overline{\omega}  =  \sum_{[\gamma],[\delta]} \langle  [\gamma]^{\vee}, [\delta]^{\vee} \rangle  \int_{\gamma} \nu \int_{\overline{\delta}}\omega
\end{equation} 
where $[\gamma]$ ranges over a basis for $H_n^{\B}(X \backslash B \,\lmod A)$ and $[\gamma]^{\vee}$ denotes the dual basis, and likewise $[\delta]$ with $A$, $B$ interchanged.
\end{corollary}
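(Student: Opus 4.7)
The plan is to combine Theorem \ref{introtheorem2} with the definition of $\s$ and a basis expansion of the comparison isomorphism. Theorem \ref{introtheorem2} gives
$$(2\pi i)^{-n}\int_{X(\CC)}\nu\wedge\overline{\omega} \;=\; (2\pi i)^{-n}\langle [\nu],\s[\omega]\rangle,$$
so the task reduces to expanding the single-valued pairing on the right as a double sum of ordinary period integrals. To do so, I would transport both arguments to the Betti side: since the Poincar\'{e}--Verdier pairing \eqref{introPVduality} is compatible with the comparison isomorphism and $\s = \mathrm{comp}_\dR^{-1}\circ F_\infty\circ \mathrm{comp}_\dR$, one has
$$\langle [\nu],\s[\omega]\rangle_\dR \;=\; \langle \mathrm{comp}_\dR[\nu],\,F_\infty\,\mathrm{comp}_\dR[\omega]\rangle_\B.$$

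Next, fix $\QQ$-bases $\{[\gamma_i]\}$ of $H_n^\B(X\min B\lmod A)$ and $\{[\delta_j]\}$ of $H_n^\B(X\min A\lmod B)$, with dual bases $\{[\gamma_i]^\vee\}, \{[\delta_j]^\vee\}$ in the corresponding Betti cohomology groups. By the very definition of the comparison isomorphism as integration of forms over cycles,
$$\mathrm{comp}_\dR[\nu] = \sum_i \Bigl(\int_{\gamma_i}\nu\Bigr)[\gamma_i]^\vee, \qquad \mathrm{comp}_\dR[\omega] = \sum_j \Bigl(\int_{\delta_j}\omega\Bigr)[\delta_j]^\vee.$$
The key geometric input is that, in the real embedding case, $F_\infty$ acts on Betti homology by sending a chain $\delta$ to its complex conjugate $\overline{\delta}$, so by the dual (cohomological) action, together with the $\CC$-linearity of $F_\infty\otimes\id$, the class $F_\infty\,\mathrm{comp}_\dR[\omega]$ evaluates on $[\delta_j]$ to $\int_{\overline{\delta_j}}\omega$. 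Expanding in the dual basis gives
$$F_\infty\,\mathrm{comp}_\dR[\omega] \;=\; \sum_j \Bigl(\int_{\overline{\delta_j}}\omega\Bigr)[\delta_j]^\vee,$$
and substituting into the bilinear Betti pairing yields exactly \eqref{introDC}.

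The main obstacle I anticipate is bookkeeping with Tate twists: the pairing \eqref{introPVduality} takes values in $\QQ(-n)$, and the definition of the single-valued period in \eqref{introsvonforms} carries a factor of $(2\pi i)^{-n}$. One must verify that these factors cancel consistently between the Betti and de Rham versions of Poincar\'{e}--Verdier duality so that \eqref{introDC} holds as a numerical identity without spurious powers of $2\pi i$; equivalently, one needs the compatibility of the top-degree trace maps on Betti and de Rham cohomology under $\mathrm{comp}_\dR$. Once those normalizations are aligned, the dual-basis expansion above delivers the formula.
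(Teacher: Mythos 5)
Your proposal is correct and follows essentially the same route as the paper's proof of Corollary \ref{corDC}: reduce to the Betti pairing via the compatibility of Verdier duality with the comparison isomorphism, expand in dual bases of Betti (co)homology, and identify the resulting quadratic expression using Theorem \ref{thm:sv recipe}. The Tate-twist bookkeeping you flag at the end resolves exactly as you anticipate --- the comparison isomorphism on $\QQ(-n)$ is multiplication by $(2\pi i)^n$, which cancels the $(2\pi i)^{-n}$ in the definition of the single-valued pairing, and the paper absorbs this into the displayed identity $(2\pi i)^n\langle[\nu],\s[\omega]\rangle=\langle c[\nu],F_\infty c[\omega]\rangle$.
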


It follows from the compatibility between the duality \eqref{introPVduality} and the comparison isomorphism that the right-hand side of (\ref{introDC}) always computes the single-valued period for any $\omega$, $\nu$. The previous formula expresses certain $dz \wedge d  \overline{z}$ integrals  
 as finite quadratic expressions in ordinary period integrals. They are reminiscent of the so-called `KLT relations' in the physics literature which will be discussed 
 in the sequel to this paper \cite{BD2}.  On the left-hand side, one multiplies the two forms  $\nu$ and $\overline{\omega}$ together before integrating; on the right, one integrates  the forms individually first, and then multiplies the integrals together. The rational coefficients $\langle[\gamma]^\vee,[\delta]^\vee\rangle$ appearing in the formula are the entries of the inverse transpose matrix of the intersection matrix of representatives for the classes $[\gamma]$ and $[\delta]$.
 
 \begin{example}\label{introExlog}
 In the case of the logarithm, we have $\nu=d\log((z-a)/(z-1))$ and formula \eqref{introDC} reads :
 $$\int_{\mathbb{P}^1(\CC)} \nu\wedge \frac{d\overline{z}}{\overline{z}} = \int_{\delta_{0,\infty}} \nu\,\int_{\overline{\gamma_0}}\frac{dz}{z}  -\int_{\gamma_1} \nu\,\int_{\overline{\delta_{1,a}}}\frac{dz}{z}$$
 where $\gamma_j$ denotes a small positive loop around $j$, and $\delta_{0,\infty}$ and $\delta_{1,a}$ denote disjoint paths from $0$ to $\infty$ and from $1$ to $a$ respectively. Indeed, both sides of the equality equal $2\pi i\,\log|a|^2$ (see \S\ref{par:example log}).
 \end{example}
 
 It is interesting, and presumably straightforward, to  extend the above results to the case of cohomology with coefficients, or over a non-trivial base (i.e., in families). 

\subsection{Separated objects and projection} 
We now explain in which sense\footnote{In the literature, single-valued analogues of polylogarithms are widely referred to as  single-valued `cousins'. What we propose in this section, then, is a  mathematical definition of the word `cousin'.} single-valued periods can be assigned to ordinary period integrals. 
For this, we describe a construction which is unrelated to the single-valued period map, and  has possible applications of its own (notably for the definition of canonical volume forms). It enables us, under a certain condition of separatedness, to associate an algebraic differential form to a singular homology class.

We say that $H^n (X\backslash A\, \lmod B)$ is \emph{separated} if its Hodge numbers $h^{p,q}$ vanish unless $p=q=0$ or $p,q>0$. For instance, all effective mixed Hodge structures of Tate type satisfy this condition.
We fix an algebraic closure $\overline{k}$ of the field $k$ and an embedding $\overline{k}\hookrightarrow \CC$ compatible with $\sigma$. The next theorem states that in the separated case there is a natural map from homology classes to logarithmic  differential forms.

\begin{theorem}  \label{introthmseparated} If $H^n (X\backslash A\, \lmod B)$ is separated, there exists a natural map 
$$c_0^{\vee} :  H_n^\B(X\backslash A \lmod B) \To    H^{n}_{\dR}(X\backslash B \lmod A) \otimes_{k} \overline{k} \ . $$
The image $c_0^{\vee} (\gamma)$ of a  class $\gamma$ can be represented by a canonical global algebraic differential form  $\nu_{\gamma}$ on $X\times_k\overline{k}$ with logarithmic singularities along $B\times_k\overline{k}$. \end{theorem}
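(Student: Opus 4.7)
The plan is to realise $c_0^\vee$ as the composition of three natural operations: Poincar\'e--Verdier duality, the inverse comparison isomorphism, and a canonical projection onto $F^n$ made available by the separated hypothesis.

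First I would apply the duality pairing (\ref{introPVduality}) in degree $r=n$ to identify
$$H_n^\B(X \min A \lmod B) \,\cong\, H^n_\B(X \min B \lmod A)(n),$$
so that a Betti homology class $\gamma$ on $X\min A \lmod B$ becomes a Betti cohomology class on the ``dual'' relative variety $X\min B \lmod A$. Applying the inverse comparison isomorphism $\mathrm{comp}_{\sigma,\dR}^{-1}$ then sends this class into $H^n_\dR(X \min B \lmod A) \otimes_{k,\sigma} \CC$. At this stage, however, the image is only defined over $\CC$, and neither step yet uses the separated hypothesis.

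The crucial step is to exploit the separated condition to construct a canonical projection
$$\pi_{F^n} \colon H^n_\dR(X \min B \lmod A) \otimes_{k,\sigma} \CC \To F^n H^n_\dR(X \min B \lmod A) \otimes_k \overline{k}$$
that is algebraic over $\overline{k}$. Under the duality above, the separated property of $H^n(X \min A \lmod B)$ transfers to the analogous property for $H^n(X \min B \lmod A)$, and should be formulated precisely so that $F^n$ admits a canonical complement: either via the non-overlapping weight pieces in the mixed Tate case (where this reduces to Beilinson's canonical splitting of the weight filtration, which is known to be rational on the de Rham side), or more generally via a complement built from $\overline{F}$-type data whose Galois orbits descend to $\overline{k}$. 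The composite of these three operations is the map $c_0^\vee$.

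The image of $c_0^\vee$ then lies in $F^n H^n_\dR(X \min B \lmod A) \otimes_k \overline{k}$. By Deligne's identification of the Hodge filtration with the filtration of the logarithmic de Rham complex, this piece is computed by global algebraic $n$-forms on $X \times_k \overline{k}$ with logarithmic singularities along $B \times_k \overline{k}$ (the relative vanishing along $A$ being automatic for a global form of top degree), producing the canonical representative $\nu_\gamma$. The principal obstacle will be formulating the separated condition sharply enough to yield a Galois-equivariant projection onto $F^n$ that is rational over $\overline{k}$ and not merely over $\CC$; this requires a careful analysis of the interaction between the Hodge and weight filtrations in the separated regime, together with a check of naturality in the data $(X,A,B)$.
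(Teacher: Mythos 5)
Your overall architecture is the right one and agrees with the paper's: $c_0^\vee$ is (the transpose of) the composite ``project modulo $F^1$, identify with $W_0$ using the splitting $H_\dR = W_0H_\dR\oplus F^1H_\dR$ that separatedness provides, compare with Betti, include''; and under the duality (\ref{introPVduality}) the annihilator of $F^1$ is exactly $F^nH^n_\dR(X\min B\lmod A)$, which is computed by $\Gamma(X,\Omega^n_X(\log B))$ via the degeneration of the logarithmic Hodge--de Rham spectral sequence. However, you have located the difficulty in the wrong place. The projection onto $F^n$ along its canonical complement (dually: onto $(F^1)^\perp$ along $(W_0)^\perp=W_{2n-1}$) is defined over $k$ itself with no further work, since both filtrations live on the de Rham $k$-vector space and separatedness says precisely that they are complementary; no Beilinson splitting or ``$\overline{F}$-type data'' enters.

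The genuine gap is the assertion that the composite lands in the $\overline{k}$-points and admits a canonical algebraic representative. Your map is $\pi_{F^n}\circ(\mathrm{duality})\circ\mathrm{comp}^{-1}$; the middle factor is transcendental, and a $k$-rational projection applied to a transcendental class a priori yields only a class over $\CC$. What saves the day is that the composite factors through the comparison isomorphism restricted to $W_0H^n(X,B)$ (equivalently through $\mathrm{gr}^W_{2n}H^n(X\min B)$ on the dual side), and \emph{this} piece of the comparison is algebraic: the spectral sequence for relative cohomology identifies $W_0H^n(X,B)$ with a subquotient of $\bigoplus_{|I|=n}H^0(B_I)$ for the zero-dimensional strata $B_I$, where the comparison is defined over $\overline{k}$ (over $k$ when the $B_I$ are split), and dually $\mathrm{gr}^W_{2n}H^n(X\min B)$ sits inside $\bigoplus_{|I|=n}H^0(B_I)(-n)$ via iterated Poincar\'e--Leray residues. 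This identification is also what produces the \emph{canonical} form $\nu_\gamma$ --- the unique global logarithmic form whose iterated residues match, up to sign and a power of $2\pi i$, the iterated partial boundary maps applied to $\gamma$ --- whereas your argument only exhibits an unspecified element of $F^n\otimes_k\CC$. Without this residue/boundary computation (together with the preliminary reduction, via the long exact sequence of the pair, showing that $\mathrm{gr}_0^W$ and $\mathrm{gr}^0_F$ of $H^n(X\min A\lmod B)$ coincide with those of $H^n(X,B)$, so that only $B$ intervenes), both assertions of the theorem remain unproved.
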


 The map $c_0^\vee$ replaces a cycle (Betti homology class) on $X\backslash A$ with boundary along $B$ with a form on $X\backslash B$, which can be viewed as a de Rham homology class in $H_\dR^\vee$ via the perfect pairing \eqref{introPVduality}. It thus allows one to associate a single-valued counterpart to an ordinary period. In view of Theorem \ref{introtheorem2}, this can informally be denoted by 
$$\int_{\gamma} \omega \qquad \quad \leadsto  \qquad    \quad (2\pi i)^{-n}\int_{X(\CC)} \nu_\gamma\wedge \overline{\omega}\ .$$
where $\omega$ is a global algebraic differential form on $X$ with logarithmic singularities along $A$.
This is called the \emph{de Rham projection} and requires the formalism of matrix coefficients in Tannakian categories to be defined (\S\ref{subsec: tannakian intro}).
One of its key features is that it provides a bridge between complex periods and single-valued periods in a way which is compatible with all algebraic relations  (assuming some form of the period conjecture).

We give an explicit recipe for computing the map $c_0^\vee$ (see Theorem \ref{coro:c0vee}) which roughly says that the poles of $\nu_{\gamma}$ lie along the irreducible components of $B$ which support the boundary of $\gamma$.

\begin{example} The logarithmic  differential form  
$$\nu_{\gamma} =  d\log\left(\frac{z-a}{z-1}\right)=\frac{dz}{z-a} - \frac{dz}{z-1} = \frac{(a-1)\, dz}{(z-1)(z-a)}  $$
is the image under $c_0^\vee$ of the class of the path $\gamma$ from $1$ to $a$. This justifies the claim made in Example \ref{introExlog}:
$$\log(a) = \int_{\gamma} \frac{dz}{z}  \quad  \qquad \leadsto \qquad    \quad   \log |a|^2 = \frac{1}{2\pi i} \int_{\PP^1(\CC)} \nu_\gamma\wedge\frac{d\overline{z}}{\overline{z}} \ .$$
 \end{example}

In a sequel to this paper \cite{BD2} we apply these ideas to integrals in superstring perturbation theory to prove a conjecture of Stieberger, which states that closed string amplitudes in genus zero are the single-valued projections of open string amplitudes. The underlying geometry is the  moduli spaces of genus zero curves. We also study, in this special case, the extension of our results to the case of cohomology with coefficients.

\subsection{Tannakian interpretation}\label{subsec: tannakian intro}
Our results are naturally phrased in the language of matrix coefficients in Tannakian categories. We lift the cohomology groups $H_{\dR/\B}$,  together with the comparison isomorphism, to an object in a Tannakian category of realisations $\mathcal{H}$. It plays the role of a category of motives. Indeed,  any reasonable category of motives $\mathcal{M}$ admits a functor to $\mathcal{H}$, which is hoped to be fully faithful. Every period can be lifted, via a geometric interpretation  of the integral which defines it, to an $\mathcal{H}$-period, which is an  element of  a ring $\Pe^\mm_{\mathcal{H}}$ of matrix coefficients  in $\mathcal{H}$. The period integral itself  can be recovered from the $\mathcal{H}$-period by applying the period homomorphism $\mathrm{per}:\Pe^\mm_{\mathcal{H}} \to \mathbb{C}$. A natural variant of the ring $\mathcal{P}^\mm_{\mathcal{H}}$ is the ring $\mathcal{P}^{\mm,\dR}_{\mathcal{H}}$ of de Rham $\mathcal{H}$-periods, i.e., functions on the Tannaka group of $\mathcal{H}$ associated to the de Rham fiber functor (de Rham motivic Galois group).  These occur in the coaction dual to the action of the  motivic Galois group, see \S \ref{sect: dR periods}. The single-valued period map gives rise to a homomorphism $\s:\Pe^{\mm,\dR}_{\mathcal{H}}\to \RR$ and allows in particular to realise  functions on the motivic Galois group as real numbers.
Note that the $p$-adic variant (Remark \ref{rem p adic}) leads, in a Tannakian framework which incorporates crystalline cohomology and the crystalline Frobenius, to a $p$-adic period map which allows one to realise functions on the motivic Galois group as $p$-adic numbers. In other terms, de Rham periods give the correct framework to study both single-valued periods and these kinds of $p$-adic periods.

The map $c_0^\vee$ from Theorem \ref{introthmseparated} defines in this context a morphism 
$$\pi^{\mm,\dR}: \Pe^{\mm,+}_{\mathcal{H},\mathrm{sep}}\otimes_\QQ\CC \longrightarrow \Pe^{\mm,\dR}_{\mathcal{H}}\otimes_\QQ\CC$$
where $\Pe^{\mm,+}_{\mathcal{H},\mathrm{sep}}$ is a subalgebra of $ \Pe^{\mm}_{\mathcal{H}}$. This is called the \emph{de Rham projection}. 
It provides a connection between   periods on the one hand and single-valued periods (or $p$-adic periods) on the other, and is in fact defined over a number field for motivic periods of objects of the form  $(\star)_{\overline{\QQ}}$.

\subsection{Contents}
In Section \S\ref{sect: MotPeriods} we review the framework of matrix coefficients in Tannakian categories (`motivic' periods), define the single-valued pairing in a general context, and prove some of its abstract properties.  In Section \S\ref{sect: Verdierduality} we review the formalism of Verdier duality, and prove the main integral formula for the single-valued periods associated to pairs of logarithmic forms (Theorems \ref{thm:sv recipe general}, \ref{thm:sv recipe}). In Section \S\ref{sect: dRProj} we discuss separated mixed Hodge structures, whose matrix coefficients admit a projection map to de Rham periods. We compute a formula for this map, and prove  Theorem \ref{introthmseparated} amongst other things. Section \S\ref{par:sv functoriality} briefly considers some of the subtle functoriality properties of the single-valued integrals. 

There are many  potentially interesting applications of this  theory.   In this paper, we study some of  the  very simplest examples,   most of which are treated in \S\ref{sect: examples}. They  include: regulators (for number fields) \S \ref{par:MT case}, periods of  the universal elliptic curve leading to non-holomorphic Eisenstein series \S\ref{par: UnivElliptic}, Green's functions and N\'eron--Tate heights on curves \S\ref{par: Greens},  and multiple zeta values \S\ref{par: SVMZV}. We recast the theory of archimedean height pairings on curves in the language of de Rham periods, and provide an integral formula for single-valued multiple zeta values, for which no  closed expression was previously known. We deduce that Deligne's associator, previously  defined only via a fixed point equation involving the non-explicit action of a motivic Galois group \cite{brownSVMZV},  actually admits an elementary integral formula. 

\subsection{Acknowledgements} 
This project has received funding from the European Research Council (ERC) under the European Union’s Horizon 2020 research and innovation programme (grant agreement no. 724638). Both authors thank  the IHES for hospitality. The second author was partially supported by ANR grant ANR-18-CE40-0017.  This paper was initiated during the trimester ``Periods in number theory, algebraic geometry and physics'' which took place at the HIM Bonn in 2018,  to which both authors offer their thanks. Many thanks to Andrey Levin, whose talk during this programme on the dilogarithm inspired this project, to Federico Zerbini for discussions, and to the anonymous reviewer for helpful comments.

\section{`Motivic' periods and the single-valued period homomorphism} \label{sect: MotPeriods}

\subsection{A category of realisations}

\subsubsection{Over the rationals}
We mostly work in a $\QQ$-linear Tannakian category $\mathcal{H}$ of systems of realisations as in \cite{deligneP1}.
 Its objects consist of triples 
$V= (V_{\B}, V_{\dR}, c) $
where $V_\B$, $V_{\dR}$ are finite-dimensional $\QQ$-vector spaces and $c$ is an isomorphism 
$c: V_{\dR} \otimes_{\QQ}\CC  \overset{\sim}{\To} V_\B \otimes_{\QQ}\CC$
with the following extra structures: 
\begin{itemize}
    \item  $V_\B$, and $V_{\dR}$ are equipped with an increasing filtration $W$ (the weight filtration). The isomorphism $c$ respects the weight filtrations on $V_\B \otimes_\QQ \CC, V_{\dR} \otimes_\QQ \CC.$
    \item $V_{\dR}$ is equipped with a decreasing filtration $F$ (the Hodge filtration). The data of $V_\B,W, cF$ defines a graded-polarizable $\QQ$-mixed Hodge structure.
    \item There is an involution 
    $F_{\infty} : V_\B \To V_\B$
    called the real Frobenius, such that the following diagram commutes
    \begin{equation}\label{eq: compatibility frobenius comp c definition}
    \begin{aligned}
    \xymatrix{
    V_\dR\otimes_\QQ\CC \ar[r]^c \ar[d]_{\id\otimes c_\dR} & V_\B\otimes_\QQ\CC \ar[d]^{F_\infty\otimes c_\B}\\
    V_\dR\otimes_\QQ\CC \ar[r]_c & V_\B\otimes_\QQ\CC
    }
    \end{aligned}
    \end{equation}
    where $c_\B$, $c_{\dR}$ denote the  action of complex conjugation on coefficients.
\end{itemize}
The morphisms in this category are linear maps $\phi$ between components of objects $\phi_\B, \phi_{\dR}$ which respect all the above data.  As shown in \cite{deligneP1}, this category is neutral Tannakian over $\QQ$ and has, in particular, two fiber functors 
$$\omega_{\B/\dR} : \mathcal{H} \To \mathrm{Vec}_{\QQ}$$
which send $(V_{\B}, V_{\dR}, c)$ to $V_{\B/\dR}$ respectively.

\subsubsection{Variant for number fields}  \label{sect: TannakianNumberFields} Let $k$ be a number field. Let $\mathcal{H}(k)$ denote the $\QQ$-linear category
whose objects consist  of:  $V_{\dR}$, a finite dimensional  vector space over $k$;  for every embedding $\sigma: k \hookrightarrow \CC$ a finite dimensional   vector space $V_{\B,\sigma}$  over $\QQ$;  a  set of $\CC$-linear isomorphisms
$$c_{\sigma} : V_{\dR}\otimes_{k,\sigma}\CC \overset{\sim}{\To} V_{\B, \sigma} \otimes_{\QQ} \CC\ .$$
They are equipped with weight and Hodge filtrations that make every $V_{\B,\sigma}$ into a graded-polarizable $\QQ$-mixed Hodge structure.
They also possess real Frobenius isomorphisms
$F_{\infty} :  V_{\B, \sigma} \overset{\sim}{\rightarrow} V_{\B, \overline{\sigma}}$ which are compatible with complex conjugation, i.e., such that the following diagram commutes:
    
     $$\xymatrix{
    V_\dR\otimes_{k,\sigma}\CC \ar[r]^{c_\sigma} \ar[d]_{\id\otimes c_\dR} & V_{\B,\sigma}\otimes_\QQ\CC \ar[d]^{F_\infty\otimes c_\B}\\
    V_\dR\otimes_{k,\overline{\sigma}}\CC \ar[r]_{c_{\overline{\sigma}}} & V_{\B,\overline{\sigma}}\otimes_\QQ\CC
    }$$
This category is Tannakian  with  fiber functors $\omega_{\dR}: \mathcal{H}(k) \rightarrow \mathrm{Vec}_{k}$ and $\omega_{\B,\sigma}: \mathcal{H}(k) \rightarrow \mathrm{Vec}_{\QQ}$ for every $\sigma$. It is neutralised by the latter. Note that $\mathcal{H}= \mathcal{H}(\QQ)$.

\begin{example} \label{ex: cohomXAB} Let $X, A, B$ satisfy $(\star)_k$.  For any $r$,
let 
$$H_{\dR} =H^r_{\dR}(X\backslash A \lmod B) \quad \hbox{ and } \quad  H_{\B,\sigma}=H^r_{\B,\sigma}(X\backslash A \lmod B)$$
be  the algebraic de Rham and  relative singular cohomology with respect to an embedding $\sigma: k \hookrightarrow \CC$, as defined in \S\ref{sect: Framework}.  
The comparison isomorphism 
$$\mathrm{comp}_{\sigma} : H_{\dR} \otimes_{k, \sigma} \CC \overset{\sim}{\To}   H_{\B,\sigma}\otimes_{\QQ} \CC$$
was defined by Grothendieck \cite{Grothendieck} in the basic case, building upon work of de Rham \cite{derhamthese}. Deligne \cite{delignehodge2, delignehodge3} showed that the above data has a natural graded-polarizable mixed Hodge structure, and hence the object
$$H^r(X\backslash A \lmod B) = \left((H_{\B,\sigma})_{\sigma} , H_{\dR} ,  (\mathrm{comp}_{\sigma})_{\sigma}\right)$$
is an object of $\mathcal{H}(k)$. The real Frobenius involution $F_{\infty}: H_{\B,\sigma} \overset{\sim}{\rightarrow} H_{\B,\overline{\sigma}}$ is induced by complex conjugation  $X_{\overline{\sigma}}(\CC) \overset{\sim}{\rightarrow} X_{\sigma}(\CC)$. 
\end{example}

\begin{remark} To define single-valued periods, one only requires the existence and properties of the real Frobenius, and can drop the filtrations $W$ and $F$. However, these filtrations play an essential role in defining the de Rham projection (see \S\ref{sect: Separated}).
\end{remark}

\subsection{\texorpdfstring{$\mathcal{H}$}{H}-periods}
Our results can be phrased in terms of matrix coefficients in the Tannakian category $\mathcal{H}$. We refer to \cite[\S 2]{brownnotesmot} for more details on this formalism.

\begin{definition} 
An $\mathcal{H}$-period is a regular function on the torsor of tensor isomorphisms from the fiber functor $\omega_\dR$ to the fiber functor $\omega_\B$. The space of $\mathcal{H}$-periods is denoted~$\mathcal{P}^\mm_{\mathcal{H}}=\mathcal{O}( \mathrm{Isom}_{\mathcal{H}}^{\otimes}(\omega_{\dR}, \omega_\B))$.
\end{definition}

Concretely, $\Pe^\mm_{\mathcal{H}}$ is the $\QQ$-vector space spanned by matrix coefficients denoted by a symbol
$$[ H,  \gamma, \omega]^{\mm}$$
where $H$ is an object of $\mathcal{H}$, $\gamma \in H_\B^{\vee}$ and $\omega\in H_{\dR}$, modulo the following relations: bilinearity in $\gamma$, $\omega$ and naturality with respect to morphisms in $\mathcal{H}$. The space  of $\mathcal{H}$-periods  $\Pe^{\mm}_{\mathcal{H}}$ is naturally a ring, equipped with a period homomorphism 
$$\mathrm{per} : \Pe^{\mm}_{\mathcal{H}} \To \CC$$
which on generators is defined by 
$$\mathrm{per}\, [ H,  \gamma, \omega]^{\mm} = \gamma( c(\omega))$$
where $H= (H_\B, H_{\dR}, c).$ It can be thought of as the integral of $\omega$ along $\gamma$. Thus, $\mathcal{H}$-periods should be thought of as `formal' lifts of periods. The period map is far from injective but its restriction to the subring of $\mathcal{H}$ generated by objects of the form given in Example \ref{ex: cohomXAB} for $k=\mathbb{Q}$ is hoped to be injective (see \S\ref{subsec: motivic terminology}).\medskip

The ring $\Pe^{\mm}_{\mathcal{H}}$ admits an involution, denoted by $F_{\infty}$, which acts on matrix coefficients via the formula $F_{\infty} [ H,  \gamma, \omega]^{\mm}  = [ H,  F_{\infty}^{\vee}\gamma, \omega]^{\mm} $. By \eqref{eq: compatibility frobenius comp c definition} it corresponds to complex conjugation:
$$ \mathrm{per} (F_{\infty} \xi ) =  \overline{\mathrm{per} (\xi) } \qquad \hbox{ for all } \xi \in \Pe^{\mm}_{\mathcal{H}}\ .$$

\begin{remark}
The superscript $\mathfrak{m}$ stands for `motivic' because the category $\mathcal{H}$ \emph{formally plays the same role} as a category of motives in the context of periods.  We refer the reader to \S\ref{subsec: motivic terminology} for a discussion of this abuse of terminology.
\end{remark}

\begin{example}\label{example cauchy} (Cauchy's theorem) Let $X = \PP^1$, $A=  \{0, \infty\}$, and $B = \varnothing$. Then 
$$H^1(X\backslash A \lmod B) = ( \QQ, \QQ, 1 \mapsto 2 \pi i)  =: \QQ(-1)$$
is the Lefschetz mixed Hodge structure. Let $\gamma$ denote a positively oriented circle around $0$ in $X\min A(\CC)=\CC^\times$ and  define the Lefschetz $\mathcal{H}$-period 
$$\LL^{\mm} = \left[ \QQ(-1), [\gamma], \left[\textstyle{\frac{dz}{z}}\right]\right]^{\mm}\ .$$
It is the `motivic' version of $2\pi i$ since its period is $\mathrm{per}(\LL^\mm)= 2\pi i.$ 
Since $\overline{\gamma} = - \gamma$, we verify that  $F_{\infty} \LL^{\mm} = - \LL^{\mm}.$ 
\end{example} 

\subsection{de Rham \texorpdfstring{$\mathcal{H}$}{H}-periods} \label{sect: dR periods} A de Rham $\mathcal{H}$-period is defined in an analogous way to $\mathcal{H}$-periods. The de Rham Galois group of the Tannakian category $\mathcal{H}$ is the group scheme of tensor automorphisms of the fiber functor $\omega_\dR$:
$$G^\dR_{\mathcal{H}}=\mathrm{Aut}_{\mathcal{H}}^{\otimes}(\omega_{\dR})\ .$$
The ring of de Rham $\mathcal{H}$-periods $\Pe^{\mm,\dR}_{\mathcal{H}}$ is by definition the affine ring of $G^\dR_{\mathcal{H}}$. Concretely, it is the $\QQ$-vector space  spanned by matrix coefficients denoted by
$$[ H,  f, \omega]^{\mm, \dR}$$
where $f\in H_{\dR}^{\vee}$ and $\omega \in H_{\dR}$, modulo the relations of bilinearity in $f$, $\omega$ and naturality. 

Since $\Pe^{\mm,\dR}_{\mathcal{H}}$ is the ring of functions on a group scheme it is endowed with a structure of a Hopf algebra whose coproduct is given by the formula
$$\Pe^{\mm,\dR}_{\mathcal{H}} \To \Pe^{\mm,\dR}_{\mathcal{H}} \otimes_\QQ \Pe^{\mm,\dR}_{\mathcal{H}} \quad  , \;[H,f,\omega]^{\mm,\dR} \mapsto  \sum_i\,[H,f,e_i^\vee]^{\mm,\dR} \otimes [H,e_i,\omega]^{\mm,\dR} $$
where the sum is over a basis $e_i$ (and $e_i^\vee$ denotes the dual basis)
of $H_{\dR}$.
It does not depend on the choice of basis.
In the same vein, we have a coaction of the Hopf algebra $\Pe^{\mm,\dR}_{\mathcal{H}}$ on the algebra of $\mathcal{H}$-periods, which is given by the similar formula
\begin{equation*}\label{eq: motivic coaction general formula}
\Pe^{\mm}_{\mathcal{H}} \To \Pe^{\mm}_{\mathcal{H}} \otimes_\QQ \Pe^{\mm,\dR}_{\mathcal{H}} \quad  , \;[H,\gamma,\omega]^{\mm} \mapsto  \sum_i\,[H,\gamma,e_i^\vee]^{\mm} \otimes [H,e_i,\omega]^{\mm,\dR} \ .
\end{equation*}

  \subsection{Single-valued periods}
  Similarly, there is a notion of Betti $\mathcal{H}$-periods (which shall not be used in this paper), and an associated Tannaka group
$G^\B_{\mathcal{H}} = \mathrm{Aut}_{\mathcal{H}}^{\otimes} (\omega_\B)$. 
The isomorphism $c$ which forms part of the data of an object of $\mathcal{H}$ (or, equivalently, the  period homomorphism $\mathrm{per}$) gives an isomorphism of affine group schemes 
\begin{equation} \label{GBisomGdR} G_{\mathcal{H}}^\B \times_{\QQ} \CC \overset{\sim}{\To} G_{\mathcal{H}}^{\dR} \times_{\QQ} \CC \ . \end{equation}
The real Frobenius $F_{\infty}$ defines an automorphism of the fiber functor $\omega_\B$ and hence a point  in  $G^\B_{\mathcal{H}} (\QQ)$ which we denote by $F_{\infty}$, without risk of confusion.
\begin{definition} The element $\s \in G^{\dR}_{\mathcal{H}}(\CC)$ is  the image of $F_{\infty} 
\in G^\B_{\mathcal{H}} (\QQ)$ under the canonical isomorphism (\ref{GBisomGdR}).
\end{definition}

The action of the element  $\s$  is  given by the composition 
$$\s : H_{\dR} \otimes_{\QQ} \CC \overset{c}{\To}  H_\B \otimes_{\QQ} \CC \overset{F_{\infty}\otimes \id}{\To} H_\B \otimes_{\QQ} \CC \overset{c^{-1}}{\To} H_{\dR} \otimes_{\QQ} \CC\ .$$
It follows from the compatibility \eqref{eq: compatibility frobenius comp c definition} that we have 
$$(\id\otimes c_\dR) \circ \s = \s \circ (\id\otimes c_\dR)$$
where $c_\dR$ denotes complex conjugation, and therefore $\s$ is real-valued: $\s \in G^{\dR}_{\mathcal{H}}(\RR)$. Furthermore, since $F_{\infty}$ is an involution, we have $\s^2 = 1$.  The element $\s$ is functorial and respects the weight filtration.  

\begin{definition}
The single-valued period homomorphism 
$$\s : \mathcal{P}^{\mm,\dR}_{\mathcal{H}} \To \RR$$
is defined by $\s \in \mathrm{Hom}(\Or(G^{\dR}_{\mathcal{H}}), \RR).$ On matrix coefficients it is given by  $$\s [ H, f, \omega]^{\mm,\dR}  = f(  \s (\omega)).$$
\end{definition}

\begin{example} 
The  Lefschetz de Rham $\mathcal{H}$-period  is
 $\LL^{\mm,\dR} = [\QQ(-1), [\textstyle{\frac{dz}{z}}]^{\vee}, [\textstyle{\frac{dz}{z}}]  ]^{\mm,\dR}$. 
Since $F_{\infty}$ acts via $-1$ on $\QQ(-1)_\B$, 
it follows from the definition that  $\s(\LL^{\mm,\dR}) =-1 $.\end{example}

If $P$ denotes the matrix of $c$ with respect to  suitable bases of $H_{\dR}, H_{\B}$, then the map $\s$ in the same basis of $H_{\dR}$ is represented by the matrix
$$P_{\s} = P^{-1} F_{\infty} P =  \overline{P}^{-1} P$$
which we call the \emph{single-valued period matrix}. It clearly satisfies $P_{\s}^2= I$, $\operatorname{tr} (P_{\s}) = \operatorname{tr}(F_{\infty})$, $\det(P_\s)=\det(F_\infty)$. 

\begin{proposition}
For an $\mathcal{H}$-period $[H, f, \omega]^{\mm,\dR} \in \Pe^{\mm,\dR}_{\mathcal{H}}$, the single-valued period can be expressed as the following quadratic expression
$$\s \, [ H, f, \omega]^{\mm,\dR} =    \sum_{i} \, \mathrm{per}\left( [H^{\vee}, F_{\infty}\gamma_i^{\vee}, f]^{\mm} \right)  \cdot \mathrm{per} \left([H, \gamma_i, \omega]^{\mm} \right)$$
where 
 the sum is over a basis $\gamma_i $ of $ H^{\vee}_\B$ (and $\gamma_i^{\vee}$ denotes the dual basis). \end{proposition}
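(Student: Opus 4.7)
The plan is to unwind the definition of $\s [H,f,\omega]^{\mdR}$ as $f(\s(\omega))$, expand $\s(\omega)$ in a basis coming from the Betti side, and identify each matrix coefficient produced in this way as a Betti period of the dual object $H^{\vee}$.

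First, I would fix a basis $\gamma_i$ of $H_{\B}^{\vee}$ with dual basis $\gamma_i^{\vee}$ of $H_{\B}$ and expand
\[
c(\omega) \;=\; \sum_i \gamma_i\bigl(c(\omega)\bigr)\,\gamma_i^{\vee} \;=\; \sum_i \mathrm{per}\,[H,\gamma_i,\omega]^{\mm}\,\gamma_i^{\vee}
\]
in $H_{\B}\otimes_{\QQ}\CC$. Applying $F_{\infty}\otimes\id$ and then $c^{-1}$, the definition of the single-valued element gives
\[
\s(\omega) \;=\; \sum_i \mathrm{per}\,[H,\gamma_i,\omega]^{\mm}\,c^{-1}\bigl(F_{\infty}\gamma_i^{\vee}\bigr),
\]
so that
\[
\s\,[H,f,\omega]^{\mdR} \;=\; f\bigl(\s(\omega)\bigr) \;=\; \sum_i \mathrm{per}\,[H,\gamma_i,\omega]^{\mm}\;f\bigl(c^{-1}(F_{\infty}\gamma_i^{\vee})\bigr).
\]

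The second step is to recognise the factor $f\bigl(c^{-1}(F_{\infty}\gamma_i^{\vee})\bigr)$ as a Betti period of the dual object $H^{\vee}$. The object $H^{\vee}$ has de Rham realisation $H_{\dR}^{\vee}$, Betti realisation $H_{\B}^{\vee}$, and comparison isomorphism $\tilde c=(c^{T})^{-1}$ characterised by $\tilde c(f)(b)=f(c^{-1}(b))$. Since the real Frobenius on $H^{\vee}$ is the transpose of $F_{\infty}$, and $F_{\infty}\gamma_i^{\vee}\in H_{\B}=(H_{\B}^{\vee})^{\vee}$, the definition of $\mathrm{per}$ gives
\[
\mathrm{per}\,[H^{\vee},F_{\infty}\gamma_i^{\vee},f]^{\mm} \;=\; \bigl(F_{\infty}\gamma_i^{\vee}\bigr)\bigl(\tilde c(f)\bigr) \;=\; \tilde c(f)\bigl(F_{\infty}\gamma_i^{\vee}\bigr) \;=\; f\bigl(c^{-1}(F_{\infty}\gamma_i^{\vee})\bigr),
\]
which is exactly the factor appearing above. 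Substituting yields the claimed formula.

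The only place that requires care, and which I would expect to be the main obstacle in writing this up cleanly, is the bookkeeping in the second step: one has to pin down the conventions for the comparison isomorphism, real Frobenius, and period map of the dual object $H^{\vee}$, and check that the transpose signs all line up so that $\mathrm{per}\,[H^{\vee},F_{\infty}\gamma_i^{\vee},f]^{\mm}$ really equals $f(c^{-1}(F_{\infty}\gamma_i^{\vee}))$ and not, say, its complex conjugate. Once these conventions are fixed, the identity is a formal consequence of bilinearity in the basis expansion and is manifestly independent of the choice of basis $\gamma_i$.
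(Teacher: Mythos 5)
Your proof is correct and is essentially the paper's own argument, which simply says the identity is a restatement of the definition via the rule for matrix multiplication together with the fact that $H^{\vee}=(H_{\B}^{\vee},H_{\dR}^{\vee},c^{*})$ with $c^{*}$ the inverse transpose of $c$ — exactly the two steps you spell out. Your more detailed bookkeeping (identifying $f(c^{-1}(F_{\infty}\gamma_i^{\vee}))$ as $\mathrm{per}\,[H^{\vee},F_{\infty}\gamma_i^{\vee},f]^{\mm}$ via $c^{*}(f)=f\circ c^{-1}$) is accurate.
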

\begin{proof}
This is a restatement of the definition using the usual rule for matrix multiplication, and using the fact that the object in $\mathcal{H}$ dual to   $H=(H_{\B}, H_{\dR}, c)$ is 
$H^{\vee} =(H_{\B}^{\vee}, H^{\vee}_{\dR}, c^*)$,  where $c^*$ is the inverse transpose of $c$.
\end{proof}

\begin{remark} \label{rem: motivicsv} A `motivic' version 
$\s^{\mm} : \Pe^{\mm,\dR}_{\mathcal{H}} \rightarrow \Pe^{\mm}_{\mathcal{H}}$ of the single-valued period 
was defined in \cite[(4.3)]{brownnotesmot}. It is given by the same  formula as the previous proposition except that  we drop both occurrences of the word  `per' (see \emph{loc. cit.} Remark 8.1). It satisfies $\mathrm{per}\circ \s^\mm=\s$.
\end{remark}

\subsection{Variant for number fields}\label{par:motivic periods number fields}
The above constructions admit a variant over number fields $k$. Given an embedding $\sigma: k \hookrightarrow \CC$, 
let $\CC_{\sigma}$ denote a copy of the complex numbers $\CC$ with $k$-linear structure given by $\sigma: k \hookrightarrow \CC$. The associated single-valued period homomorphism is obtained as the composite
$$\s_{\sigma}: H_{\dR}\otimes_{k}  \CC_{\sigma}  \overset{c_{\sigma}}{\To}  H_{\B,\sigma}\otimes_{\QQ}\CC \overset{F_{\infty} \otimes \id}{\To}H_{\B,\overline{\sigma}}\otimes_{\QQ}\CC \overset{c^{-1}_{\overline{\sigma}}}{\To}   H_{\dR}\otimes_{k}\CC_{\overline{\sigma}}\ .$$
This single-valued period homomorphism, in the special case of mixed Tate objects, is related  to the `big period map' in  \cite[Definition 2.3]{goncharovexponential}.
Since the $k$-linear structures on $\CC_{\sigma}$ and $\CC_{\overline{\sigma}}$ are different when $\sigma$ is not real, this does \emph{not} in general  induce an isomorphism of fiber functors. It only does so after extending scalars to the ring $\CC_{\overline{\sigma}} \otimes_k   \CC_{\sigma}$. In this case,
since every map in the above is functorial with respect to morphisms and respects the tensor product, we deduce a single-valued period homomorphism:
$$\s_{\sigma} : \Pe^{\mm,\dR}_{\mathcal{H}(k)} \To  \CC_{\overline{\sigma}} \otimes_k   \CC_{\sigma}$$
where $\Pe^{\mm,\dR}_{\mathcal{H}(k)}$ is the Hopf algebra of equivalence classes of matrix coefficients $[H, f, \omega]^{\mm,\dR}$ with $f\in H_{\dR}^{\vee}, \omega \in H_{\dR}$.  Equivalently, it defines a point $\s_{\sigma} \in G^{\dR}_{\mathcal{H}(k)}(\CC_{\overline{\sigma}}\otimes_k \CC_{\sigma})$ where  $G^{\dR}_{\mathcal{H}(k)} = \mathrm{Aut}^{\otimes}_{\mathcal{H}(k)}(\omega_{\dR})$ is an affine group scheme over $k$.
Compatibility with complex conjugation (which acts on $\CC_{\overline{\sigma}} \otimes_k   \CC_{\sigma}$ by $\overline {x\otimes y} = \overline{y} \otimes \overline{x}$)  implies that
\begin{equation} \label{ssigmarelation} \overline{\s_{\sigma}} = \s_{\overline{\sigma}}\ .
\end{equation} 
If $P_{\sigma}$ (resp. $P_{\overline{\sigma}}$) are  matrix representatives for  $c_{\sigma}$ (resp. $c_{\overline{\sigma}}$) with respect to suitable bases, then the map $\s_{\sigma}$ is represented by the matrix 
\begin{equation} \label{Pssigmaformula}  P_{\s_{\sigma}}= P_{\overline{\sigma}}^{-1} F_{\infty} P_{\sigma}= \overline{P_{\sigma}}^{-1} P_{\sigma}
\end{equation} 
with values   in  $ \CC_{\overline{\sigma}} \otimes_k   \CC_{\sigma} $.
It satisfies the following `single-valued period relations'
\begin{equation} P_{\s_{\sigma}} \overline{P_{\s_{\sigma}}}= I 
\end{equation} 
where $I$ is the identity matrix. 
This follows from (\ref{ssigmarelation}) and  the identity $\s_{\sigma} \s_{\overline{\sigma}}=\mathrm{id}$, which holds since $F_{\infty}$ is an involution (and follows  directly from (\ref{Pssigmaformula})).

In the case when $\sigma: k \hookrightarrow \RR$ is a real embedding, the map $\s_{\sigma}$ defined above does in fact define an isomorphism of fiber functors, and therefore defines a point 
\begin{equation} \label{ssigmapointonGdrReal} \s_{\sigma} \in G^{\dR}_{\mathcal{H}(k)}(\RR)  \quad \hbox{ satisfying } \quad  \s_{\sigma}^2=1 \ .
\end{equation}
where it is understood that $\mathbb{R}$ is viewed as a $k$-algebra via $\sigma$. In this case the period matrix $P_{\s_{\sigma}}$ has real entries and  satisfies 
\begin{equation} \label{Pequations} 
P_{\s_{\sigma}}^2= I \qquad \hbox{ and  } \qquad  \operatorname{tr} ( P_{\s_{\sigma}}) =\operatorname{tr} (F_{\infty}) \;\, , \; \det( P_{\s_{\sigma}}) =\det(F_{\infty})
\end{equation} 
since $P_{\s_{\sigma}} = P_{\sigma}^{-1} F_{\infty} P_{\sigma}$ is conjugate to $F_{\infty}.$

\begin{remark} 
Let $k$ be a number field. Objects of the category $\mathcal{H}(k)$ can be viewed as families of objects
 over $\mathrm{Spec}\, k$ \cite[\S 10.1]{brownnotesmot} . In particular, the comparison isomorphisms for all embeddings can be expressed as a canonical isomorphism:
 $$\  H_{\dR}\otimes_\QQ\CC \simeq \left(\bigoplus_{\sigma: k \hookrightarrow \CC} H_{\B,\sigma}\right)\otimes_\QQ\CC\ .$$
 The real Frobenius involution $F_{\infty}$ acts on the right-hand side and induces a single-valued involution  $\s$ on the left-hand side.   For the reasons given above, it does not induce a tensor isomorphism of $\omega_{\dR}\otimes_{\QQ} \CC$ in general. 
 
 However, in the case when $k$ is totally real, it does.  Let 
 $R_{k/\QQ} \, G^{\dR}_{\mathcal{H}(k)}$  denote the restriction of scalars from $k$ to $\QQ$ of the affine group scheme 
  $G^{\dR}_{\mathcal{H}(k)}=\mathrm{Aut}^{\otimes}_{\mathcal{H}(k)} \omega_{\dR}$, which is defined over $k$. It satisfies  $R_{k/\QQ} \,G^{\dR}_{\mathcal{H}(k)}(\RR)= G^{\dR}_{\mathcal{H}(k)}(\RR\otimes_{\QQ} k)$.  We deduce that in this case 
   the real Frobenius therefore  defines a point
 $$\s \in R_{k/\QQ} \,G^{\dR}_{\mathcal{H}(k)}(\RR)\  \qquad \qquad  \hbox{for } k \hbox{ totally real} \ ,$$
 which is given by the collection $(\s_{\sigma})_{\sigma: k\hookrightarrow \RR}.$ 
\end{remark}

\subsection{Mixed Tate case}\label{par:MT case}
Let $k$ be a number field and let $\MT(k)$ denote the category of mixed Tate motives over $k$ \cite{levinetatemotives}. Since the Hodge realisation functor is fully faithful \cite[Proposition 2.14]{delignegoncharov}, it embeds as a   full subcategory of 
$\mathcal{H}(k)$. It  has two particularities:

\begin{enumerate}[(i)]
\item It admits a canonical fiber functor $\omega_{\can}: \MT(k) \rightarrow \mathrm{Vec}_{\QQ}$ with the property that $M_{\dR}= M_{\can} \otimes_{\QQ} k$.
\item  This functor (and hence the de Rham realisation) is graded in even degrees. 
\end{enumerate}

Property $(i)$ implies that for every embedding $\sigma: k \hookrightarrow \CC$ there is  a comparison isomorphism $c_{\sigma}:  M_{\can} \otimes_{\QQ} \CC \cong M_{\B,\sigma} \otimes_{\QQ} \CC$. The single-valued involution 
$$\s_{\sigma}: M_{\can}\otimes_{\QQ}  \CC \overset{c_{\sigma}}{\To}  M_{\B,\sigma}\otimes_{\QQ}\CC \overset{F_{\infty} \otimes \id}{\To}M_{\B,\overline{\sigma}}\otimes_{\QQ}\CC \overset{c^{-1}_{\overline{\sigma}}}{\To}   M_{\can}\otimes_{\QQ}\CC $$
is therefore an isomorphism of fiber functors and hence defines a point 
$$\s_{\sigma} \in G^{\can}_{\MT(k)} (\CC)$$
where $G^{\can}_{\MT(k)} = \mathrm{Aut}^{\otimes}_{\MT(k)}(\omega_{\can})$ is the Tannaka group associated to the canonical fiber functor. Its affine ring $\Pe_{\MT(k)}^{\mm,\can}$ is defined by matrix coefficients $[M, f,v]^{\can}$ where $f\in M_{\can}^{\vee}$ and $v \in M_{\can}$ subject to the usual relations (bilinearity, naturality). The key point is that these relations are bilinear in $v$, $f$ over $\QQ$ (as opposed to $k$), and hence the single-valued involution defines a homomorphism $\s_{\sigma}: \Pe^{\mm,\can}_{\MT(k)} \rightarrow \CC$.  As before, we have $\s_{\sigma} \s_{\overline{\sigma}}=1$ and $\overline{\s_{\sigma}} = \s_{\overline{\sigma}}.$

Property $(ii)$ implies that one can define 
a different element,  which is denoted by $\sv_{\sigma}$ (without the subscript $\sigma$ in the case when $k = \QQ$) which satisfies $\sv_{\sigma}( \LL^{\mm,\dR})=1$ \cite{brownSVMZV}. 
It is the element of $G^{\can}_{\MT(k)}(\CC)$ defined as the product $\sv_\sigma = \tau(-1)\s_\sigma$, where $\tau(-1)\in G^{\can}(\QQ)$ is multiplication by $(-1)^w$ in weight $2w$. Stated differently,  
let $\GG_m$ denote the group of automorphisms of $\QQ(-1)_{\can}$. The action of $G^{\can}$ on  $\QQ(-1)_{\can}$ defines a  morphism of affine group schemes $
G^{\can}_{\MT(k)} \rightarrow \GG_m.$
 The image of $\s_{\sigma}$ under this map is $-1$; the image of $\sv_{\sigma}$ is the identity. This implies that $\sv_{\sigma}$ in fact lies in  the complex points of the unipotent radical  $U^{\can}_{\MT(k)} \leq  G^{\mathrm{can}}_{\MT(k)} $.

 \begin{example}
  Let $n\geq 1$ and let $E$ be an extension of $\QQ(-n)$ by $\QQ(0)$ in the category $\MT(k)$.  Its period matrices, with respect to the graded $\omega_{\can}$ basis $e_{0}, e_n$, and a suitable basis of $E_{\B,\sigma}$  compatible with the weight filtration, are of the form 
 $$P_{\sigma} =  \begin{pmatrix}  1 & a_{\sigma} \\ 0  &(2\pi i)^n \end{pmatrix} $$
 where $a_{\sigma} \in \CC$ is well-defined up to addition of an element in $(2\pi i)^n \QQ$. The operator $\sv_{\sigma}$  is represented by the matrix
 $$\tau(-1) \overline{P_{\sigma}}^{-1} P_{\sigma} =  \begin{pmatrix}  1 & r_{\sigma} \\ 0  & 1 \end{pmatrix} $$
 where $r_{\sigma} = \sv_{\sigma} [ E, e_0^{\vee}, e_n]^{\can}$ is given by 
 $$r_{\sigma} = 2 \begin{cases}     \mathrm{Re}\,  a_{\sigma} & \hbox{ if } n \hbox{ odd;} \\
  i\, \mathrm{Im}\, a_{\sigma} & \hbox{ if } n \hbox{ even.} \end{cases} \ $$
  On the other hand consider the regulator $\mathrm{reg}_{\sigma}$ \cite[\S 1.6]{beilinsondeligne}: 
  $$\mathrm{reg}_{\sigma}: \mathrm{Ext}^1_{\MT(k)} (\QQ(-n), \QQ(0)) \overset{\sigma}{\To} 
  \mathrm{Ext}^1_{\RR-\mathrm{MHS}} (\RR(-n), \RR(0)) \overset{\sim}{\To} \CC /( 2\pi i)^n \RR  $$
  where the first map assigns to an extension $E$  the extension of $\RR$-mixed Hodge structures $E_{\B, \sigma} \otimes_{\QQ} \RR$.  The last map is the class of $a_{\sigma}$ modulo $(2\pi i)^n \RR$, which is well-defined. Finally, if one identifies
  $\CC /( 2\pi i)^n \RR $ with $i^{n-1} \RR$ via the inclusion of the latter in $\CC$ then one concludes that 
 $$ r_{\sigma}  =2 \, \mathrm{reg}_{\sigma} \ . $$
  In other words,  $2\, \mathrm{reg}_\sigma$ is the image of  $\log( \sv_\sigma)$ in the abelianisation of the  Lie algebra of the unipotent radical $U^{\mathrm{can}}_{\MT(k)}$. 
  In fact, much of \cite{beilinsondeligne} can be profitably recast using the language of unipotent de Rham polylogarithms.
\end{example}

    \subsection{A general motivic setting}\label{subsec: motivic terminology}
            Most of the contructions in this article are in the setting of the Tannakian category $\mathcal{H}(k)$. It plays the role of a category of motives (over $k$, with coefficients in $\mathbb{Q}$), and suffices to  speak of  single-valued periods and the de Rham projection. Any Tannakian category of motives $\mathcal{M}(k)$ should come equipped with a realisation functor
        \begin{equation}\label{eq: hodge realization motives}
        \mathcal{M}(k) \longrightarrow \mathcal{H}(k)
        \end{equation}
        through which the Betti and de Rham fiber functors factor. It is reasonable to hope, by analogy with the classical Hodge conjecture, that this realisation functor is fully faithful. This would imply that the rings of matrix coefficients of $\mathcal{M}(k)$, which are defined in an analogous way to those of $\mathcal{H}(k)$, embed into those of $\mathcal{H}(k)$ (in other words, $ \mathcal{P}^{\mm}_{\mathcal{M}(k)} \rightarrow \mathcal{P}^{\mm}_{\mathcal{H}(k)}$ and $\mathcal{P}^{\mm,\dR}_{\mathcal{M}(k)} \rightarrow \mathcal{P}^{\mm,\dR}_{\mathcal{H}(k)}$ are injective). This motivates our use of the superscript $\mathfrak{m}$, for `motivic'. Note that the objects $H^r(X\backslash A\,\lmod B)\in \mathcal{H}(k)$ defined in Example \ref{ex: cohomXAB} are expected to lift to objects in $\mathcal{M}(k)$, and should generate $\mathcal{M}(k)$ as a Tannakian category. All of our constructions should also naturally lift to a motivic setting.
        
        An example of such a category $\mathcal{M}(k)$ which is unconditionally defined is Nori's category of mixed motives $\mathcal{NM}(k)$. In this case the full faithfulness of \eqref{eq: hodge realization motives} is conjectured in \cite[Conjecture 3.22]{levineK}. By definition, the objects $H^r(X\backslash A\,\lmod B)\in \mathcal{H}(k)$ lift to objects of $\mathcal{NM}(k)$, and generate it as a Tannakian category.
        
        In the mixed Tate case the situation is favorable: the category $\mathcal{MT}(k)$ is defined unconditionally (it should be a full subcategory of any Tannakian category $\mathcal{M}(k)$ as above), and  the realisation functor $\MT(k)\to \mathcal{H}(k)$ is known to be fully faithful. In fact we have already used this fact in the previous paragraph to view $\MT(k)$ as a full subcategory of $\mathcal{H}(k)$. It is conjectured, but not known,  that $\MT(k)$ is a full subcategory of $\mathcal{NM}(k)$, which is one of  the reasons for working primarily with $\mathcal{H}(k)$.

   \subsection{Periods over a general base} It is possible to define motivic periods in families and their associated single-valued periods (see the notes \cite[\S 8.3]{brownnotesmot}). Since the essential idea is the same, but requires a certain amount of technical background, we shall not describe this in any great detail here, only to note that examples such as the logarithm $\log(z)$, and its single-valued version $\log |z|^2$, make perfect sense as a function of the parameter $z$. 
   
   The construction does, however, clarify the origin of the term `single-valued'. For periods over a base $S$, we replace the de Rham component  $H_{\dR}$ with an algebraic vector bundle with integrable connection and regular singularities over $S$, and the Betti component $H_\B$ with a  local system over $S(\CC)$.
    The key point is that an algebraic vector bundle is locally trivial in the Zariski topology, so admits everywhere locally a de Rham basis. 
    Since the single-valued period only depends on  de Rham data, it is necessarily  single-valued (as opposed  to classical periods, which use the Betti local system and inherently have monodromy). As remarked by the reviewer, this is nothing special about $\s$ but applies to any element of $G^{\dR}(\CC)$ (the corresponding automorphism of fiber functors). Alas, the only non-trivial element of $G^{\dR}(\CC)$ which is known is precisely the element $\s$. The point of this paper is to show, in addition, that it is computable in many interesting cases.

    \subsection{Further remarks}
    Let $M$ be an object of $\mathcal{H}$. 
    Its Betti Tannaka  group $G_{\mathcal{H}}^{\B}(M)$ is the image of $G^{\B}_{\mathcal{H}}$ in $\mathrm{GL}(M_\B)$ \cite[\S 3.6]{brownnotesmot}. This group is a variant of a Mumford--Tate group, which is defined to be the Tannaka group  of the category of mixed Hodge structures: more precisely there is a morphism from the Mumford--Tate group of $M_{\B}$ into $G^{\B}_{\mathcal{H}}(M_{\B})$, but it is not necessarily an  isomorphism since the category $\mathcal{H}$ is an enrichment of the category of mixed Hodge structures (it encodes additional data).
    Let $\xi \in \Pe_{\mathcal{H}}^{\mm,\dR}$ denote a de Rham $\mathcal{H}$-period. There exists a minimal object $M(\xi)$ of $\mathcal{H}$ \cite[\S 2.4]{brownnotesmot} associated to $\xi$. It has  the property that $M(\xi)_{\dR}$ is the right $\Or(G_{\mathcal{H}}^{\dR})$-comodule generated by $\xi$ (under the right coaction of $\Pe_{\mathcal{H}}^{\mm,\dR}$ on itself).

     \begin{proposition}
The single valued motivic period matrix  of $M$  (Remark \ref{rem: motivicsv}) has  rational entries if and only if
$F_{\infty}$ is central in  $G_{\mathcal{H}}^{\B}(M)$.

The single-valued motivic period of $\xi$ is rational if and only if $F_{\infty}$ is central in $G_{\mathcal{H}}^{\B}(M(\xi)). $
        \end{proposition}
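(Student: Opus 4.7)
The plan is to interpret both claims as statements about when regular functions on the torsor $\mathrm{Isom}^{\otimes}(\omega_{\dR},\omega_{\B})$ are constant, using the identification $\Pe^{\mm}_{\mathcal{H}}=\mathcal{O}(\mathrm{Isom}^{\otimes}(\omega_{\dR},\omega_{\B}))$: a motivic period is in $\QQ$ precisely when its associated regular function is constant. Upon fixing a base comparison $c_0$ over $\CC$, any other $c$ takes the form $c=g\,c_0$ for $g\in G^{\B}_{\mathcal{H}}$, and the universal motivic single-valued matrix of $M$ becomes the function
\[
g\;\longmapsto\; c_0^{-1}|_M\cdot g^{-1}F_{\infty}\,g\cdot c_0|_M,
\]
with values in $\mathrm{GL}(M_{\dR})\otimes\Pe^{\mm}_{\mathcal{H}}$, depending on $g$ only through its image in $G^{\B}_{\mathcal{H}}(M)$.

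For the first claim, this function is constant in $g$ iff $g^{-1}F_{\infty}g=F_{\infty}$ for every $g\in G^{\B}_{\mathcal{H}}(M)$, i.e.\ iff $F_{\infty}$ is central in $G^{\B}_{\mathcal{H}}(M)$, yielding the equivalence.

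For the second claim, the easy direction follows from the first applied to $M(\xi)$, since $\s^{\mm}(\xi)$ is one entry of the motivic single-valued matrix of $M(\xi)$. For the converse, the key is that $\phi:c\mapsto c^{-1}F_{\infty}c$, viewed as a morphism $\mathrm{Isom}^{\otimes}\to G^{\dR}_{\mathcal{H}}$, is equivariant for the right translation action on the source and the conjugation action on the target. Pullback $\s^{\mm}=\phi^{*}$ therefore intertwines the right coaction on $\Pe^{\mm}_{\mathcal{H}}$ with the adjoint coaction $\mathrm{Ad}^{*}$ on $\Pe^{\mdR}_{\mathcal{H}}$, i.e.\ $\Delta\circ\s^{\mm}=(\s^{\mm}\otimes\id)\circ\mathrm{Ad}^{*}$. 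Rationality of $\s^{\mm}(\xi)$, written $\Delta(\s^{\mm}(\xi))=\s^{\mm}(\xi)\otimes 1$, combined with this intertwining, forces $\s^{\mm}$ to land in $\QQ$ on the entire family of periods related to $\xi$ by $\mathrm{Ad}^{*}$. By the minimality characterisation of $M(\xi)$ as the sub-comodule of $\Pe^{\mdR}_{\mathcal{H}}$ generated by $\xi$, together with the Tannakian reconstruction of $G^{\B}_{\mathcal{H}}(M(\xi))$ from matrix coefficients of $M(\xi)$, this reduces the conjugacy class of $F_{\infty}$ in $G^{\B}_{\mathcal{H}}(M(\xi))$ to the single point $\{F_{\infty}\}$, i.e.\ yields centrality.

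The main obstacle is this last step: extracting centrality from the coaction compatibility together with the minimality of $M(\xi)$. One must carefully distinguish the right-translation sub-comodule which defines $M(\xi)_{\dR}$ from the adjoint sub-comodule arising in the intertwining, and reconcile them through the Tannakian duality underlying the definition of $M(\xi)$ in \cite[\S 2.4]{brownnotesmot}.
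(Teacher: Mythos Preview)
Your approach matches the paper's. The first claim is handled identically: both recognise that an element of $\Pe^{\mm}_{\mathcal{H}}$ lies in $\QQ$ exactly when it is $G^{\B}_{\mathcal{H}}$-invariant, and that the $G^{\B}_{\mathcal{H}}$-translates of the single-valued matrix are $(c^{\mm})^{-1}(g^{-1}F_{\infty}g)\,c^{\mm}$. For the second claim, the easy direction is immediate from the first applied to $M(\xi)$, as you say.

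For the hard direction you and the paper take the same route. Your equivariance $\Delta\circ\s^{\mm}=(\s^{\mm}\otimes\id)\circ\mathrm{Ad}^{*}$ is exactly what the paper records in the shorthand $\s^{\mm}(g\,\xi)=g^{-1}\s^{\mm}(\xi)\,g$: the right $G^{\dR}_{\mathcal{H}}$-action on the torsor conjugates $\s^{\mm}\in G^{\dR}_{\mathcal{H}}(\Pe^{\mm}_{\mathcal{H}})$, so at the level of matrix coefficients $\s^{\mm}$ pulls back the adjoint coaction on $\Pe^{\mdR}_{\mathcal{H}}$ to the de Rham coaction on $\Pe^{\mm}_{\mathcal{H}}$. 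The paper then invokes that $M(\xi)$ is generated by $\xi$ under $G^{\dR}_{\mathcal{H}}$ to propagate rationality to all single-valued periods of $M(\xi)$ and reduce to the first part.

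You flag precisely the point that this generation is for the \emph{right-translation} comodule structure, whereas the equivariance above is for the \emph{adjoint} one. This is a genuine wrinkle: the adjoint orbit of $\xi=[M(\xi),f_0,\omega_0]$ consists of the coefficients $[M(\xi),(h^{-1})^{\vee}f_0,h\omega_0]$ with $f_0$ and $\omega_0$ moved \emph{simultaneously}, and one must still argue that this forces rationality of every entry $[M(\xi),e_i^{\vee},e_j]$ of the single-valued matrix. The paper does not spell this reconciliation out any more than you do. So your outline is faithful to the paper's argument and is not missing an idea the paper supplies; you are simply more explicit about where the remaining work lies.
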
 
        
   \begin{proof}
   Let $c^{\mm}:M_{\dR}  \otimes_{\QQ} \Pe_{\mathcal{H}}^{\mm} \overset{\sim}{\rightarrow} M_\B \otimes_{\QQ} \Pe_{\mathcal{H}}^{\mm}$ denote the universal comparison isomorphism \cite[\S 4.1]{brownnotesmot}. 
   It follows from its definition that the right action of  $G_{\mathcal{H}}^\B$ induced by its action on the coefficient ring $\Pe_{\mathcal{H}}^{\mm}$ (which we shall denoted by $|_g$) is given by  left matrix  multiplication: i.e.,    $ g \, c^{\mm} = c^{\mm}|_g$. 
   The motivic single-valued involution is $\s^{\mm} = (c^{\mm})^{-1} F_{\infty} c^{\mm}$.
     It follows that  $G_{\mathcal{H}}^\B$ acts via  
     $$\s^{\mm}|_{g} =  (c^{\mm})^{-1} (g^{-1} F_{\infty} g )c^{\mm}\ .$$ 
     The element $\s^{\mm}$ is therefore  $G_{\mathcal{H}}^\B$-invariant if and only if $g^{-1} F_{\infty} g= F_{\infty}$ for all $g$, i.e.,   $F_{\infty}$ is central in  $G_{\mathcal{H}}^{\B}(M)$. Since, by the Tannaka theorem, the space of $G_{\mathcal{H}}^{\B}$-invariants of $\mathcal{P}^{\mm}_{\mathcal{H}}$ are exactly $\QQ$, this proves the first part.
     
     For the second part,  note that the left action of $G^{\dR}_{\mathcal{H}}$ on de Rham periods satisfies   $\s^{\mm}(g\,  \xi^{dR})= g^{-1} \s^{\mm}(\xi) g$.  Therefore if $\s^{\mm}(\xi)$ is rational, it follows that all single-valued motivic periods of $M(\xi)$ are too, since the latter is generated by $\xi$ under the action of  $G^{\dR}_{\mathcal{H}}$. Furthermore, $\xi$ is itself a de Rham period of $M(\xi)$, so the second statement follows by applying the first statement to $M(\xi).$
      \end{proof}
  
    The argument shows more generally that the centraliser of $F_{\infty}$ in $G^\B_{\mathcal{H}}$ acts trivially on the motivic version  of the single-valued period matrix. For example, we immediately deduce that the single-valued versions of  (motivic) algebraic numbers with abelian Galois group are rational numbers.

    \begin{remark}
The ring of single-valued periods can be thought of group-theoretically as follows.   Let $T_{\mathcal{H}} = \mathrm{Isom}^\otimes_{\mathcal{H}}(\omega_{\dR}, \omega_{\B}) = \mathrm{Spec}\, \Pe^{\mm}_{\mathcal{H}}$.  There is a morphism $$p \mapsto p^{-1} F_{\infty} p :T_{\mathcal{H}}\To G_{\mathcal{H}}^{\dR}$$
of schemes over $\QQ$. Let $S_{\mathcal{H}} \subset G_{\mathcal{H}}^{\dR}$ denote the Zariski closure of the image. It is contained in the subscheme of elements squaring to the identity.  The morphism $T_{\mathcal{H}} \rightarrow S_{\mathcal{H}}$  is equivariant with respect to the action of $G_{\mathcal{H}}^{\dR}$ by multiplication on the right on $T_{\mathcal{H}}$, and by conjugation on $S_{\mathcal{H}}$. Its 
non-empty fibers are torsors over the centraliser of $F_{\infty}$ in $G_{\mathcal{H}}^\B$, which acts on $T_{\mathcal{H}}$ via $p \mapsto  c p$ for any $c \in G_{\mathcal{H}}^\B$
satisfying $cF_{\infty} = F_{\infty} c.$ In order to discuss what this construction means in terms of relations between single-valued periods we 
replace $\mathcal{H}$ with a Tannakian category of motives $\mathcal{M}=\mathcal{M}(\mathbb{Q})$ as in \S\ref{subsec: motivic terminology} (or alternatively with the subcategory of $\mathcal{H}$ generated by objects $H^r(X\backslash A\,\lmod B)$ as in Example \ref{ex: cohomXAB}) and perform the same formal constructions. It would be desirable to have a description of the kernel $I_{\mathcal{M}}$ of the natural map $\Pe^{\mm,\dR}_{\mathcal{M}}=\Or(G_{\mathcal{M}}^{\dR}) \rightarrow \Or(S_{\mathcal{M}})$, which encodes the relations between motivic lifts of single-valued periods, i.e., it is the ideal of \emph{motivic} relations (relative to $\mathcal{M}$) between single-valued periods. It contains the ideal generated by $\mathbb{L}^{\dR}+1$.
The period conjecture for $\mathcal{M}$ states that the comparison isomorphism should be Zariski dense in $T_{\mathcal{M}}(\mathbb{C})$, and hence the single-valued period $\mathsf{s}$ should also be Zariski dense in $S_{\mathcal{M}}(\mathbb{C})$. The classical period conjecture would imply that  $I_{\mathcal{M}}$ is   the ideal of \emph{all}  relations amongst single-valued periods.  
    \end{remark}

\subsection{Examples in small rank} 
   Let $H= (H_\B, H_{\dR}, c)$ be an object of $\mathcal{H}$.  Let 
     $$H_\B = H_\B^+ \oplus H_\B^-$$
     denote the decomposition of $H_B$ into $F_{\infty}$-eigenspaces. Since the single-valued period matrix $P_{\s}$
     is conjugate to $F_{\infty}$, its trace is $\mathrm{tr}\,  P_{\s} = \dim H_B^+- \dim H_B^-$ and its determinant is  $\det P_{\s} = (-1)^{\dim H_\B^-}.$
    \subsubsection{Rank 1} Let $M$ be of rank one. Then $P_{\s}= \det P_{\s}= \det F_{\infty}=F_{\infty}$.
    In particular,  for any object of rank one, the single valued period is $\pm 1$.

\subsubsection{Rank 2}  Suppose that  $\dim H_\B^+ = \dim H_\B^- =1$ (for example,   $H$ has Hodge numbers $(p,q)$ and $(q,p)$ with $p\neq q$). Denote the period matrix with respect to  a basis of $H_{\dR}$ and a basis of eigenvectors of $H_\B^+ \oplus H^-_\B$ by 
    $$P = \begin{pmatrix} a^+ & b^+ \\ i a^- & i b^- \end{pmatrix} \ $$
    where $a,b\in \RR.$ The single-valued period matrix is 
    $$P_{\s}= \overline{P}^{-1} P =\frac{1}{a^+ b^- - b^+a^- }\begin{pmatrix}  a^+ b^-+a^-b^+  &   2 b^+b^-   \\ -2a^+ a^- &  - (a^+ b^- + a^-b^+) \end{pmatrix} \cdot$$
    It is clearly real-valued, satisfies $P_{\s}^2 = I$, and has vanishing trace. 
    
     For an example,  see \S\ref{par: UnivElliptic}. 
   Other  examples of such objects arise  from modular forms \cite{brownCNHMF3}, although the  quasi-periods of modular forms are not well-known. If one chooses  a basis of $M_{\dR}$ adapted to the Hodge filtration, the terms $a^+a^-$ can be interpreted as the Petersson norm of a modular form. The term $b^+b^-$ can be thought of as a `Petersson norm' of a weakly holomorphic modular form. The diagonal terms $a^+b^-+a^-b^+$ appear as coefficients in the associated weak harmonic lift.

\section{Verdier duality and differential forms} \label{sect: Verdierduality}

	\subsection{Relative cohomology and Verdier duality}\label{subsec: Verdier}
	
		Classical Poincar\'{e} duality allows one to interpret a homology class as a cohomology class of complementary degree. We need the following classical variant \cite[Theorem 2.4.5]{hubermuellerstach} for which we include a proof in order to verify the compatibility with mixed Hodge structures (alternatively one could use [\emph{loc. cit.}] and Hodge realisation for Nori motives).
		
	    \begin{theorem}\label{thm: PDNCD}
	    Let $(X,A,B)$ satisfy $(\star)_\CC$ (defined in  \S\ref{sect: Framework}).  For every integer $r$ there is a perfect pairing of $\QQ$-mixed Hodge structures: 
	    \begin{equation}\label{eq:PDNCDpairing}
	    H^r(X\min B \lmod A) \otimes H^{2n-r}(X\min A \lmod B) \longrightarrow \QQ(-n)\ .
	    \end{equation}
	    \end{theorem}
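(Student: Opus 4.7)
The approach is to interpret both sides of the pairing as hypercohomology of constructible complexes on $X$ and then reduce the duality statement to Verdier duality on the smooth projective variety $X$. Set $U = X\min(A\cup B)$ and introduce the four open immersions
$$j_A': U \hookrightarrow X\min B,\quad j_B : X\min B \hookrightarrow X,\quad j_B': U \hookrightarrow X\min A,\quad j_A : X\min A \hookrightarrow X,$$
which form a Cartesian square with corners $U$, $X\min B$, $X\min A$, $X$. Define the constructible complexes
$$F_{AB} := R(j_B)_* (j_A')_! \, \QQ_U \qquad\text{and}\qquad F_{BA} := R(j_A)_* (j_B')_! \, \QQ_U\,.$$
Unwinding the direct images gives $H^r(X; F_{AB}) = H^r(X \min B \lmod A)$ and $H^{2n-r}(X; F_{BA}) = H^{2n-r}(X\min A \lmod B)$, so it suffices to produce a canonical perfect pairing $H^r(X; F_{AB}) \otimes H^{2n-r}(X; F_{BA}) \to \QQ(-n)$.

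For this I would invoke Verdier duality on $X$, whose dualizing complex is $\omega_X = \QQ_X[2n](n)$, so that $H^r(X;F_{AB})^{\vee} \cong H^{-r}(X; \mathbb{D}_X F_{AB})$. Applying the identities $\mathbb{D}\circ Rg_* = g_!\circ\mathbb{D}$ and $\mathbb{D}\circ g_! = Rg_*\circ\mathbb{D}$ for open immersions $g$, together with $\mathbb{D}_U \QQ_U = \QQ_U[2n](n)$, gives
$$\mathbb{D}_X F_{AB} \;=\; j_{B!}\, R(j_A')_*\, \QQ_U\, [2n](n).$$
The theorem thereby reduces to the base-change identity $j_{B!}\, R(j_A')_*\, \QQ_U \;\cong\; R(j_A)_*\, (j_B')_!\, \QQ_U$ on $X$, for the Cartesian square of open immersions described above.

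This identity is the crux of the argument; it is not one of the four classical base-change formulas, but it holds because the normal crossings hypothesis forces a product local model. Around any point of $A \cup B$ one has analytic coordinates $z_1,\dots,z_n$ with $A = \{z_1\cdots z_p = 0\}$ and $B = \{z_{p+1}\cdots z_{p+q} = 0\}$, so locally the situation factors as an external tensor product and reduces to the case where $A$ and $B$ are disjoint smooth divisors. In that case the identity can be verified by a direct stalk computation: both sides vanish on $B$ and, away from $B$, they agree with $R(j_A')_* \QQ_U$. Standard sheaf-theoretic glueing then produces the global isomorphism.

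To upgrade the pairing to a morphism of $\QQ$-mixed Hodge structures, I would work in Saito's derived category of mixed Hodge modules, in which Verdier duality and the $Rf_*/f_!$ identities all preserve the Hodge and weight filtrations. Alternatively, one can combine Deligne's logarithmic de Rham realizations via the cup product $\Omega^\bullet_X(\log A)(-A) \otimes \Omega^\bullet_X(\log B)(-B) \to \Omega^{2n}_X[-2n]$ with the trace map $H^{2n}(X) \to \QQ(-n)$. The principal difficulty lies in the base-change identity of the second paragraph; once that is in place, the Verdier-duality formalism and the compatibility with Hodge and weight filtrations are essentially automatic.
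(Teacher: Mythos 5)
Your proposal follows essentially the same route as the paper: the crux in both cases is the exchange identity $j_{B!}\,R(j_A')_*\,\QQ_U \simeq R(j_A)_*\,(j_B')_!\,\QQ_U$ (the paper's Proposition \ref{prop: commutejj}), proved by reducing to the local product model afforded by the normal crossing hypothesis, after which Verdier duality on $X$ and the lift to Saito's mixed Hodge modules give the perfect pairing exactly as you describe. The only cosmetic caveat is that one should first construct the global morphism (by adjunction from the base-change isomorphism $(j_A)^*(j_B)_!\simeq (j_B')_!(j_A')^*$) and then check it is locally an isomorphism, rather than glue local isomorphisms.
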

	    
	    We focus on the algebraic case but nevertheless note that this theorem is more generally true for $X$ a compact complex manifold (without the reference to mixed Hodge structures, and without the Tate twist).
    
        The proof of Theorem \ref{thm: PDNCD} is sheaf-theoretic.  For the rest of \S\ref{subsec: Verdier},  we  abuse  notation and use the same letters to denote algebraic varieties and  their  underlying complex  manifolds.
        It is convenient to compute the relative cohomology of the pair $(X\min A,B\min A\cap B)$ as the cohomology of $X$ with values in the complex of sheaves 
        $$\mathcal{F}(A,B) := R(j_{X\min A}^X)_*(j_{X\min A\cup B}^{X\min A})_!\QQ_{X\min A\cup B}[n]\ ,$$
         where  $j_{U}^Y$ always denotes an open immersion of $U$ into $Y$. We view $\mathcal{F}(A,B)$ as an object of the 
         full subcategory  $\mathrm{D}(X)$ of the bounded derived category of sheaves of $\QQ$-vector spaces on $X$ consisting of complexes of sheaves with constructible cohomology.
         
         The shift by $n$ ensures that $\mathcal{F}(A,B)$ lies in the abelian subcategory $\mathrm{Perv}(X)\subset \mathrm{D}(X)$ of perverse sheaves on $X$ (this is because all the open immersions that we consider are  affine). In order to say something about mixed Hodge structures we  note that $\mathcal{F}(A,B)$ lifts to an object,   denoted by the same symbol, in the category $\mathrm{MHM}(X)$ of mixed Hodge modules over $X$ \cite{saitoMHM}. This is because mixed Hodge modules have a six-functor formalism that lifts that  of perverse sheaves.
        
         \begin{proposition}\label{prop: commutejj}
         Under the assumptions of Theorem \ref{thm: PDNCD}, the natural morphism
        $$  (j_{X\backslash B}^X)_!R(j_{X\backslash A\cup B}^{X\backslash B})_*\QQ_{X\backslash A\cup B}[n] \longrightarrow R(j_{X\backslash A}^X)_*(j_{X\backslash A\cup B}^{X\backslash A})_!\QQ_{X\backslash A\cup B}[n]$$
        is an isomorphism of mixed Hodge modules.
        \end{proposition}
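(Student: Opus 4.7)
The plan is to verify the natural morphism is an isomorphism by checking its stalks on the four locally closed strata of $X$ cut out by $A$ and $B$: the open stratum $X\setminus(A\cup B)$, the two divisorial strata $A\setminus(A\cap B)$ and $B\setminus(A\cap B)$, and the intersection $A\cap B$. Since every operation used lifts through the six-functor formalism to $\mathrm{MHM}(X)$, it suffices to check this at the level of the underlying complexes of sheaves; the promotion to mixed Hodge modules is then automatic.

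On the open stratum both sides tautologically restrict to $\QQ_{X\setminus(A\cup B)}[n]$ and the natural map is the identity. At a point $x\in B\setminus(A\cap B)$, the left-hand side vanishes because $(j^X_{X\setminus B})_!$ is extension by zero across $B$; the right-hand side also vanishes because $x$ lies in $X\setminus A$, where the sheaf $(j^{X\setminus A}_{X\setminus A\cup B})_!\,\QQ$ is zero by construction. At a point $x\in A\setminus(A\cap B)$, one may choose a contractible coordinate neighborhood $N_x$ disjoint from $B$; both stalks then reduce to $R\Gamma(N_x\setminus A;\QQ)[n]$, and naturality of the morphism forces these identifications to agree.

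The crux is the stratum $A\cap B$. The LHS vanishes again by extension by zero across $B$, so the task is to show the RHS stalk vanishes as well. Using the normal-crossing hypothesis together with the assumption that $A$ and $B$ share no common irreducible component, one picks local coordinates at $x\in A\cap B$ in which $A$ is cut out by $z_1\cdots z_a=0$ and $B$ by $z_{a+1}\cdots z_{a+b}=0$ on a polydisk $N_x$, with $a,b\geq 1$. Excision and the definition of the lower-shriek extension identify the stalk of the RHS with
$$R\Gamma\bigl(N_x\setminus A,\;(N_x\cap B)\setminus A;\;\QQ\bigr)[n].$$
A Künneth decomposition in these coordinates isolates the factor $R\Gamma(D^b,\{z_1\cdots z_b=0\};\QQ)$, which vanishes because both the polydisk $D^b$ and the star-shaped union of its coordinate hyperplanes are contractible, so this relative cohomology is trivial.

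The main obstacle is precisely this final local computation along $A\cap B$: translating the sheaf-theoretic vanishing into a topological statement and exploiting the normal-crossing structure through a Künneth reduction is exactly where the geometric hypothesis $(\star)_\CC$ is genuinely used. Without transversality of $A$ and $B$ the pair $(N_x\setminus A,(N_x\cap B)\setminus A)$ would not split as a product, and the vanishing would generically fail.
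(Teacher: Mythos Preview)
Your proof is correct and rests on the same essential input as the paper's---the local product structure coming from the normal crossing hypothesis together with a K\"unneth reduction---but the presentation differs. The paper argues locally by writing $X=Y\times Z$, $A=A'\times Z$, $B=Y\times B'$, and then observes in one stroke that both sides of the morphism identify with the external product $R(j^Y_{Y\setminus A'})_*\QQ_{Y\setminus A'}\boxtimes (j^Z_{Z\setminus B'})_!\QQ_{Z\setminus B'}$, under which the natural map becomes the identity. Your route instead decomposes $X$ into the four strata cut out by $A$ and $B$ and checks stalks on each, invoking K\"unneth only at the deepest stratum $A\cap B$ to prove the required vanishing $R\Gamma(D^b,\{z_1\cdots z_b=0\};\QQ)\simeq 0$. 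The paper's external-product formulation is more compact and handles the ``naturality forces these identifications to agree'' step (your phrase on $A\setminus(A\cap B)$) transparently, since one sees the map rather than just the stalks; your stratum-by-stratum analysis, on the other hand, makes explicit exactly where the geometric hypothesis $(\star)$ bites, namely at $A\cap B$. One small point: your assertion that the map is an isomorphism on the stratum $A\setminus(A\cap B)$ is correct but would benefit from one line explaining why the adjunction morphism restricts to the identity over an open disjoint from $B$---this is implicit in the paper's product description.
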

    
        \begin{proof}
        This morphism follows from the adjunction between $(j_{X\backslash A}^X)^*$ and $R(j_{X\backslash A}^X)_*$ applied to the base change isomorphism $(j_{X\backslash A}^X)^*(j_{X\backslash B}^X)_!\simeq (j_{X\backslash A\cup B}^{X\backslash A})_!(j_{X\backslash A\cup B}^{X\backslash B})^*$. Since the functor from mixed Hodge modules to perverse sheaves is exact and faithful, it is enough to check that this morphism is an isomorphism in $\mathrm{D}(X)$. This is a local statement and we may assume that we have $X=Y\times Z$, $A=A'\times Z$ and $B=Y\times B'$ with $A'$ and $B'$ normal crossing divisors in $Y$ and $Z$, respectively. We thus have an isomorphism $\QQ_{X\backslash A\cup B}\simeq \QQ_{Y\backslash A'}\boxtimes \QQ_{Z\backslash B'}$ which induces isomorphisms
        $$(j_{X\backslash B}^X)_!R(j_{X\backslash A\cup B}^{X\backslash B})_*\QQ_{X\backslash A\cup B}\simeq R(j_{Y\backslash A'}^Y)_*\QQ_{Y\backslash A'}\boxtimes (j_{Z\backslash B'}^Z)_!\QQ_{Z\backslash B'}$$
        and 
        $$R(j_{X\backslash A}^X)_*(j_{X\backslash A\cup B}^{X\backslash A})_!\QQ_{X\backslash A\cup B}\simeq R(j_{Y\backslash A'}^Y)_*\QQ_{Y\backslash A'}\boxtimes (j_{Z\backslash B'}^Z)_!\QQ_{Z\backslash B'}\ .$$
        The morphism in the statement of the proposition is easily seen to be compatible with these isomorphisms, and the proof is complete.
        \end{proof}
        
        For $X$ a complex algebraic variety, the Verdier duality functor $\mathbb{D}_X$ on $\mathrm{D}(X)$ restricts to an involution of $\mathrm{Perv}(X)$ that lifts to $\mathrm{MHM}(X)$ \cite{saitoMHM}.  If $f:X\rightarrow Y$ is a morphism of complex algebraic varieties then we have isomorphisms $Rf_*\circ \mathbb{D}_X \simeq \mathbb{D}_Y\circ Rf_!$ and $Rf_!\circ \mathbb{D}_X \simeq \mathbb{D}_Y\circ Rf_*$ in $\mathrm{D}(X)$. If $Y$ is a point then $\mathbb{D}_Y$ is simply linear duality and so applying $\mathbb{H}^r(Y,-)$ to the first isomorphism gives
        \begin{equation}  \label{HkDxFHc} \mathbb{H}^r(X,\mathbb{D}_X\mathcal{F}) \simeq \mathbb{H}_\c^{-r}(X,\mathcal{F})^\vee\ .
        \end{equation} 
        In this setting, and in the case when $X$ is smooth,  classical Poincar\'{e} duality follows from the isomorphism $\mathbb{D}_X\QQ_X[n]\simeq \QQ_X[n](n)$.

         \begin{proof}[Proof of Theorem \ref{thm: PDNCD}] Since $R(j_{X\min A}^X)_! \simeq (j_{X\min A}^X)_!$ is exact,
        the compatibility between Verdier duality and  pushforward functors gives an isomorphism in $\mathrm{MHM}(X)$:
        $$\mathbb{D}_X\mathcal{F}(A,B) \simeq (j_{X\backslash A}^X)_!R(j_{X\backslash A\cup B}^{X\backslash A})_*\QQ_{X\backslash A\cup B}[n](n) \overset{\ref{prop: commutejj}}{\simeq} \mathcal{F}(B,A)(n)\ ,$$
        The  statement of the theorem follows from (\ref{HkDxFHc}) since $X$ is compact.
        \end{proof}
        
        \begin{remark}
        The above argument shows that for $A$, $B$ not necessarily normal crossing, there is always a natural  map
        $$H^r(X\min B \lmod A) \To H^{2n-r}(X\min A \lmod B)^{\vee}(-n)\ .$$
        It is not an isomorphism in general, as the following counter-example shows. Let $X = \PP^2(\CC)$, $A \cong \PP^1(\CC)$ be  a line and $B \cong \PP^1(\CC) \cup \PP^1(\CC)$ be the union of two other lines, such that all three lines meet at a point. On the one hand, $H^\bullet(X\min A \;\mathrm{mod}\; B)$ is the cohomology of $\CC^2$ relative to two parallel lines, which is the cohomology of $\CC$ relative to two points. This is concentrated in degree $1$ with $H^1\simeq \QQ(0)$. On the other hand, $H^\bullet(X\min B \;\mathrm{mod}\; A)$ is the cohomology of $\CC\times\CC^*$ relative to a line $\CC \times \{\mathrm{pt}\}$, which is the cohomology of $\CC^*$ relative to a point. This is concentrated in degree $1$ with $H^1\simeq \QQ(-1)$.
        \end{remark}
        
        \begin{remark}
        The proof of Proposition \ref{prop: commutejj} shows that  the same result  holds if $(X,A,B)$ is `locally a product'. We focus on the normal crossing situation in order to obtain explicit formulae for the pairing in terms of differential forms.
        \end{remark}
       
    \subsection{Verdier duality and differential forms}\label{subsec: verdier diff}
    We want to interpret the pairing \eqref{eq:PDNCDpairing}, after extending  coefficients to $\CC$, in terms of differential forms. We first discuss a simpler situation. Let $X$ be a  complex manifold of dimension $n$ (not necessarily compact), and let $j:U\hookrightarrow X$ be an open. We have an isomorphism in $\mathrm{D}(X)$:
    $$ \mathbb{D}_X(j_!\CC_U) \simeq Rj_*\CC_U[2n]\ ,$$
    which induces a perfect pairing of complex vector spaces:
	\begin{equation}\label{eq:PDsimpler}
	\mathbb{H}^{r}(X,j_!\CC_U)\otimes \mathbb{H}^{2n-r}_\c(X,Rj_*\CC_U) \rightarrow \CC 
	\end{equation}
	by using  (\ref{HkDxFHc}).
	
		\begin{remark}
		The group $\mathbb{H}^\bullet(X,j_!\CC_U)$ is by definition the relative cohomology of the pair $(X,X\min U)$. 
		For an interpretation in classical terms of the group $\mathbb{H}_\c^\bullet(X,Rj_*\mathbb{C}_U)$ it is convenient to choose a compactification $\overline{X}$ of $X$ and write it as $\mathbb{H}^\bullet(\overline{X},(j_X^{\overline{X}})_!Rj_*\mathbb{C}_U)$. In the appropriate geometric framework (which can be achieved by resolution of singularities) one is thus in the setting of Proposition \ref{prop: commutejj} which allows us to interpret $\mathbb{H}_\c^\bullet(X,Rj_*\mathbb{C}_U)$ as a relative cohomology group.
		\end{remark}
	
	We now interpret the pairing \eqref{eq:PDsimpler} in terms of differential forms.  Recall that the classical Poincar\'{e} lemma gives a quasi-isomorphism $\CC_U\simeq \A^\bullet_U$, where $\A^\bullet_U$ denotes the complex of smooth complex-valued differential forms on $U$. The existence of partitions of unity for smooth functions implies that every sheaf of $\mathcal{C}^\infty_X$-modules is soft on a complex manifold $X$ \cite[III.2.9, III.3.9]{iversen}.
	This is the case for all sheaves of smooth forms that  arise in the rest of this section. In particular these sheaves are acyclic with respect to the global section functors $\Gamma$, $\Gamma_\c$  with and without compact support \cite[III.2.7, IV.2.2]{iversen}. 
	
		\begin{remark}
		 Note that the perfect pairing $\mathbb{H}^r_\c(X,j_!\CC_U)\otimes \mathbb{H}^{2n-r}(X,Rj_*\CC_U) \rightarrow \CC$, where we have switched $\mathbb{H}$ and $\mathbb{H}_\c$, is easy to describe: we have $\mathbb{H}^r_\c(X,j_!\CC_U)\simeq H^r_\c(U)$ and $ \mathbb{H}^{2n-r}(X,Rj_*\CC_U)\simeq H^{2n-r}(U)$, and the pairing is nothing but the Poincar\'{e} duality pairing for $U$.
		\end{remark}	
	
	We obtain quasi-isomorphisms
    $$j_!\CC_U\simeq j_!\A^\bullet_U \;\;\textnormal{ and }\;\; Rj_*\CC_U\simeq j_*\A^\bullet_U\ ,$$
    where the second quasi-isomorphism uses the fact that  each $\A^r_U$ is a soft sheaf. The pairing \eqref{eq:PDsimpler} can then be viewed as a pairing
    		$$ H^r(\Gamma(X,j_!\A^\bullet_U))\otimes H^{2n-r}(\Gamma_\c(X,j_*\A^\bullet_U)) \rightarrow \CC\ .$$
    		Recall that classical Poincar\'{e} duality is induced by the integration of the cup-product of smooth differential forms. From this and the general formalism of Verdier duality it follows that this last pairing is induced by the pairing of complexes
    		$$\Gamma(X,j_!\A^\bullet_U)\otimes \Gamma_\c(X,j_*\A^\bullet_U) \rightarrow \CC[2n] \;\; , \;\; \nu\otimes\omega \mapsto \int_X \nu\wedge\omega\ .$$
		Note that this  integral is well-defined: for $\nu$ a section of $j_!\A^r_U$ and $\omega$ a section of $j_*\A^{2n-r}_U$ with compact support, the wedge product $\nu\wedge\omega$ extends to a smooth differential form of degree $2n$ on $X$ with compact support, and can be integrated on $X$.\medskip

		Unfortunately, this description of the pairing \eqref{eq:PDsimpler} is impractical. Indeed, the complex $\Gamma(X,j_!\A^\bullet_U)$ consists of forms on $X$ that vanish in the neighbourhood of every point of $X\min U$. Such forms are typically constructed using partitions of unity and do not arise naturally in an algebraic context. We now give (in a special case) a description of the pairing \eqref{eq:PDsimpler} that will allow us to work with more general differential forms.
		
	\subsection{The case of one normal crossing divisor}\label{par:caseonencd}

		 In this subsection $X$ is a (not necessarily compact) complex manifold, $D$ is a normal crossing divisor in $X$, and $j:U\hookrightarrow X$ denotes its complement. In local charts, $X=\{(z_1,\ldots,z_n)\in\CC^n\; , \; |z_i|<1\}$ is a polydisk and $D=\{z_1\cdots z_s=0\}$ is a union of coordinate hyperplanes. We can make use of the complex of sheaves of (smooth) logarithmic forms $\A^\bullet_X(\log D)$, whose sections are locally given as linear combinations of forms
		$$\alpha\wedge \frac{dz_{i_1}}{z_{i_1}}\wedge\cdots\wedge\frac{dz_{i_m}}{z_{i_m}}\ ,$$
		with $1\leq i_1<\cdots <i_m\leq s$ and $\alpha$ a smooth complex-valued differential form on $X$. Also, for $\mathcal{F}$ a sheaf of $\mathcal{O}_X$-modules on $X$, we denote by $\mathcal{F}(-D)$ the subsheaf of $\mathcal{F}$ whose local sections are of the form $(z_1\cdots z_s)\omega$ for $\omega$ a local section of $\mathcal{F}$.

		We let $p:\widetilde{X}\rightarrow X$ denote the iterated real-oriented blow-up along the irreducible components of $D$ \cite{gillamrealoriented}. 
		It is an isomorphism above $X\min D$ and replaces each irreducible component of $D$ by the sphere bundle of its normal bundle. The result is that  $\widetilde{X}$ is a smooth manifold with corners \cite{joycecorners}. In concrete terms, computing the pullback of a form by $p$ locally amounts to passing to polar coordinates $z_k=\rho_ke^{i\theta_k}$ around each irreducible component $z_k=0$ of $D$. 
		
		\begin{definition}\label{defi:polar smooth}
		A \emph{polar-smooth form} on $(X,D)$ is a smooth form on $X\min D$ which extends to a smooth form on $\widetilde{X}$.
		\end{definition}

		The sheaf of polar-smooth $r$-forms is simply the pushforward $p_*\A^r_{\widetilde{X}}\hookrightarrow j_*\A^r_{X\min D}$.

		\begin{example}
		For $(X,D)=(\CC,\{0\})$, the forms
		\begin{equation}\label{eq:big wedge product polar}
		\frac{dz\wedge d\overline{z}}{\overline{z}}= -2i\,e^{i\theta} d\rho\,d\theta \;\;\;\textnormal{ and }\;\;\; \frac{z}{\overline{z}} \,d\overline{z} = e^{i\theta}(d\rho-i \rho \, d\theta)\ . 
		\end{equation}
		are polar-smooth but do not extend to smooth forms on $X$.
		\end{example}
		
		\begin{lemma}\label{lem:wedge product integrable simpler}
		For $\nu$ a section of $\A^\bullet_X(\log D)(-D)$ and $\omega$ a section of $\A^\bullet_X(\log D)$, the wedge products $\nu\wedge \overline{\omega}$ and $\overline{\nu}\wedge\omega$ are polar-smooth forms on $(X,D)$.
		\end{lemma}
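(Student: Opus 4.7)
The plan is to verify polar-smoothness by a direct local computation in polar coordinates. In a chart where $X=\{|z_1|,\ldots,|z_n|<1\}$ and $D=\{z_1\cdots z_s=0\}$, a local section $\nu$ of $\A^\bullet_X(\log D)(-D)$ can be written as $(z_1\cdots z_s)\,\eta$ for $\eta$ a local section of $\A^\bullet_X(\log D)$. By linearity one reduces to the case where $\eta$ and $\omega$ are single elementary terms
\[
\eta = \alpha\wedge\bigwedge_{k\in I}\frac{dz_k}{z_k}\qquad\text{and}\qquad \omega = \beta\wedge\bigwedge_{k\in J}\frac{dz_k}{z_k},
\]
for subsets $I,J\subseteq\{1,\ldots,s\}$ and smooth forms $\alpha,\beta$, so that up to signs
\[
\nu\wedge\overline{\omega} = (\alpha\wedge\overline{\beta})\wedge\prod_{k=1}^{s}\Phi_k,
\]
where each local factor $\Phi_k$ absorbs one of the $s$ factors of $z_k$ and depends only on whether $k$ lies in $I$ and/or $J$.

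The key step is the case analysis for $\Phi_k$. Writing $z_k=\rho_k e^{i\theta_k}$, one verifies directly:
\begin{enumerate}[(i)]
\item if $k\notin I\cup J$, then $\Phi_k=z_k=\rho_k e^{i\theta_k}$;
\item if $k\in I\setminus J$, then $\Phi_k = z_k\cdot\tfrac{dz_k}{z_k}=dz_k=e^{i\theta_k}(d\rho_k+i\rho_k\,d\theta_k)$;
\item if $k\in J\setminus I$, then $\Phi_k = z_k\cdot\tfrac{d\overline{z}_k}{\overline{z}_k}=e^{i\theta_k}(d\rho_k-i\rho_k\,d\theta_k)$;
\item if $k\in I\cap J$, then $\Phi_k = z_k\cdot\tfrac{dz_k}{z_k}\wedge\tfrac{d\overline{z}_k}{\overline{z}_k}=\tfrac{dz_k\wedge d\overline{z}_k}{\overline{z}_k}=-2i\,e^{i\theta_k}\,d\rho_k\wedge d\theta_k$;
\end{enumerate}
each of these is smooth in the coordinates $(\rho_k,\theta_k)$ on $\widetilde X$, the last two cases being precisely the polar-smooth model forms exhibited in the example preceding the lemma. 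Since $\alpha\wedge\overline{\beta}$ is smooth on $X$ and pulls back to a smooth form on $\widetilde X$, each local summand of $\nu\wedge\overline{\omega}$ is polar-smooth, which proves the first claim. The statement for $\overline{\nu}\wedge\omega$ then follows by complex conjugation, or equivalently by interchanging the roles of $\nu$ and $\omega$ throughout.

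The argument is essentially forced once one understands the example, and there is no conceptual obstacle. The only point requiring care is case (iv), where one must confirm that a single factor of $z_k$ from the $(-D)$ twist is exactly sufficient to tame the simultaneous logarithmic poles $\tfrac{dz_k}{z_k}\wedge\tfrac{d\overline{z}_k}{\overline{z}_k}$ modulo polar-smoothness; this is the content of the first displayed formula in the example. The decomposition into the four cases above is what makes it clear that the $(-D)$ twist on $\nu$ alone suffices, regardless of how many indices happen to lie in $I\cap J$.
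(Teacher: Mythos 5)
Your proof is correct and takes essentially the same route as the paper: reduce to a local polydisk chart, allocate the single factor of $z_k$ from the $(-D)$ twist to each coordinate direction, and check the resulting elementary factors against the model polar-smooth forms $\frac{dz\wedge d\overline{z}}{\overline{z}}$ and $\frac{z}{\overline{z}}d\overline{z}$ in polar coordinates. The only cosmetic difference is that you spell out all four cases for $\Phi_k$, whereas the paper absorbs your cases (i) and (ii) into a single smooth prefactor $\varphi$ and displays only the two genuinely singular factors.
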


		\begin{proof}
		We shall write out  the proof for $\nu\wedge\overline{\omega}$.  The other case follows by applying complex conjugation.  Since the statement is local,  we reduce to the case where $X=\{(z_1,\ldots,z_n)\in\CC^n\; , \; |z_i|<1\}$ is a polydisk and $D=\{z_1\cdots z_s=0\}$ is a union of coordinate hyperplanes. We write
		$$\nu = (z_1\cdots z_s)\alpha\wedge\bigwedge_{i\in I}\frac{dz_i}{z_i}=\left(\prod_{i\in\{1,\ldots,s\}\min I}z_i\right)\alpha\wedge\bigwedge_{i\in I} dz_i\;\;\textnormal{ and  }\;\; \omega = \beta\wedge \bigwedge_{j\in J}\frac{dz_j}{z_j} $$
		with $I$ and $J$ subsets of $\{1,\ldots,s\}$ and $\alpha$, $\beta$ smooth forms on $X$. We can thus write $\nu\wedge\overline{\omega}$ as a wedge product
		\begin{equation}\label{eq:big wedge product}
		\varphi \wedge \left(\bigwedge_{i\in I\cap J}\frac{dz_i\wedge d\overline{z}_i}{\overline{z}_i} \right)\wedge\left(\bigwedge_{i\in J\min I} \frac{z_i}{\overline{z}_i}\,d\overline{z}_i\right)\ .
		\end{equation}	
		with $\varphi$ a smooth form on $X$. By \eqref{eq:big wedge product polar} they are polar-smooth forms on $(X,D)$.
		\end{proof}

		\begin{proposition}\label{prop:PD simpler log forms conjugate}
		\begin{enumerate}[1.]
		\item We have quasi-isomorphisms
		$$j_!\A^\bullet_U \stackrel{\sim}{\hookrightarrow} \A^\bullet_X(\log D)(-D) \;\; \textnormal{\textit{ and } } \;\; \overline{\A^\bullet_X(\log D)} \stackrel{\sim}{\hookrightarrow} j_*\A^\bullet_U \ .$$
		\item With  these identifications, the pairing \eqref{eq:PDsimpler} is induced by the pairing of complexes
		\begin{eqnarray*} \Gamma(X,\A^\bullet_X(\log D)(-D)) \otimes \Gamma_\c(X,\overline{\A^\bullet_X(\log D)}) & \To & \CC[2n] \\
		\nu\otimes\overline{\omega} & \mapsto & \int_X\nu\wedge\overline{\omega}\ .
		\end{eqnarray*}
		\end{enumerate}
		\end{proposition}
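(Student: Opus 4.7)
Both claims are local on $X$, so I would fix a chart $X=\{|z_i|<1\}\subset\CC^n$ with $D=\{z_1\cdots z_s=0\}$, and reduce, via a Künneth-type factorization over the irreducible components of $D$, to the one-variable situation.

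For the quasi-isomorphism $\overline{\A^\bullet_X(\log D)}\hookrightarrow j_*\A^\bullet_U$, I would invoke the smooth analogue of Deligne's logarithmic de Rham theorem: the inclusion $\A^\bullet_X(\log D)\hookrightarrow j_*\A^\bullet_U$ is a quasi-iso, since both sides model $Rj_*\CC_U$ (using softness of $\A^\bullet_U$ for the right-hand side, and the explicit stalk computation at points of $D$, where the cohomology is an exterior algebra on the classes $[\frac{dz_i}{z_i}]$). Applying complex conjugation, which is a real self-map of the underlying manifold preserving $U$ and $D$ setwise and exchanging $\A^\bullet_X(\log D)$ with $\overline{\A^\bullet_X(\log D)}$, then gives the stated quasi-iso. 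For $j_!\A^\bullet_U\hookrightarrow\A^\bullet_X(\log D)(-D)$, on $U$ both complexes agree with $\A^\bullet_U$ and the result is the ordinary smooth Poincaré lemma; at a point of $D$, the stalk of $j_!\CC_U$ vanishes, so the point is to prove $\A^\bullet_X(\log D)(-D)$ is stalk-acyclic. Since a local section is a sum of terms $\prod_{j\notin I}z_j\,\alpha\wedge\bigwedge_{i\in I}dz_i$ with $\alpha$ smooth and $I\subseteq\{1,\ldots,s\}$, I would build an explicit contracting homotopy by combining the holomorphic Poincaré lemma in the $z_i$ directions (radial integration) with the $\overline{\partial}$-Poincaré lemma in the $\overline{z}$ directions, and check that the vanishing prefactor $\prod_{j\notin I}z_j$ is preserved so the primitive remains in $\A^\bullet_X(\log D)(-D)$.

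For part (2), the integral $\int_X\nu\wedge\overline{\omega}$ is well-defined because by Lemma \ref{lem:wedge product integrable simpler} the integrand is polar-smooth on $(X,D)$, hence extends to a smooth top-degree form on the real-oriented blow-up $\widetilde{X}$, of compact support inherited from $\omega$. To see this pairing of complexes induces the Verdier pairing \eqref{eq:PDsimpler}, I would apply the general principle that for any pair of fine resolutions of $j_!\CC_U$ and $Rj_*\CC_U$ equipped with a compatible wedge product and a trace morphism to $\CC$, the induced pairing on hypercohomology agrees with the Verdier duality pairing. Here the wedge $\A^\bullet_X(\log D)(-D)\otimes\overline{\A^\bullet_X(\log D)}\to p_*\A^{\bullet}_{\widetilde{X}}$ lands in polar-smooth forms and, composed with integration on $\widetilde{X}$, gives the trace; unraveling the identifications matches this with the abstract Verdier pairing, as classical Poincaré duality is already realized by integration of wedge products.

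The technical heart of the proof is the stalk-acyclicity of $\A^\bullet_X(\log D)(-D)$ at points of $D$, which requires a careful twisted logarithmic Poincaré lemma with the $(-D)$-vanishing order preserved by the homotopy. Everything else (the conjugate log quasi-iso obtained by applying complex conjugation to the smooth Deligne quasi-iso, and the identification of the integration pairing with the Verdier one via the general formalism on fine resolutions) follows from standard machinery.
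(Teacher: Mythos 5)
Your treatment of part (1) follows the paper's route: the conjugate--logarithmic quasi-isomorphism is the smooth logarithmic de Rham theorem composed with complex conjugation, and the twisted quasi-isomorphism $j_!\A^\bullet_U\hookrightarrow \A^\bullet_X(\log D)(-D)$ reduces, exactly as in the paper, to stalk-acyclicity at points of $D$, proved by a contracting homotopy that preserves the vanishing along $D$. The paper's homotopy is a single integration $f\mapsto \int_0^{z_1}f(u_1,z_2,\ldots,z_n)\,du_1$ in the holomorphic direction, the substitution $u_1=uz_1$ showing that the primitive is again divisible by $z_1$; your proposed mix of the holomorphic and $\overline{\partial}$-Poincar\'{e} lemmas can be made to work (a solution of $\overline{\partial}u=z_jg_0$ may be taken of the form $z_ju_0$ with $\overline{\partial}u_0=g_0$), but it is heavier than necessary. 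Either way, you correctly located the technical content of part (1).

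Part (2), however, has a genuine gap. The ``general principle on fine resolutions with a compatible wedge product and trace'' requires as an input that $\nu\otimes\overline{\omega}\mapsto\int_X\nu\wedge\overline{\omega}$ be a \emph{morphism of complexes}, i.e.\ that $\int_X d(\nu\wedge\overline{\omega})=0$ whenever $\deg\nu+\deg\omega=2n-1$. This is not supplied by standard machinery: $\nu\wedge\overline{\omega}$ is not smooth on $X$, so the classical Stokes theorem does not apply; and on the real-oriented blow-up $\widetilde{X}$, which is a manifold with corners, integration of compactly supported top-degree forms is \emph{not} a chain map --- Stokes produces boundary terms $\sum_i\int_{\partial_i\widetilde{X}}(\nu\wedge\overline{\omega})|_{\partial_i\widetilde{X}}$ over the sphere bundles replacing the components of $D$. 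The proof must show that these vanish, which the paper does by reducing to the two local model forms of \eqref{eq:big wedge product polar} and checking that their restrictions to $\rho=0$ are zero. This is precisely the point where complex conjugation makes the statement non-trivial (without it, $\nu\wedge\omega$ is smooth on $X$ and ordinary Stokes suffices, cf.\ Remark \ref{rem:without conjugation}), and your proposal asserts the conclusion without performing this check.
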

		
		\begin{proof}
		\begin{enumerate}[1.]
		\item We first justify the second quasi-isomorphism. The fact that the inclusion of $\Omega^\bullet_X(\log D)$ inside $j_*\A^\bullet_U$ is a quasi-isomorphism is classical (see \cite[Proposition 3.13]{deligneequadiff} or \cite[Proposition 8.18]{voisin}) and one can prove that the inclusion of $\Omega^\bullet_X(\log D)$ inside $\A^\bullet_X(\log D)$ is a quasi-isomorphism thanks to a Dolbeault complex argument as in the case $D=\varnothing$ (see the proof of \cite[Lemma 8.13]{voisin}). Complex conjugation is an automorphism of $j_*\A^\bullet_U$ and our claim follows.
		Regarding the first quasi-isomorphism, we note that the inclusion $j_!\A^\bullet_U \hookrightarrow \A^\bullet_X(\log D)(-D)$ is an isomorphism on restricting to $U$. Thus, we simply need to prove that the cohomology sheaf of the complex $\A^\bullet_X(\log D)(-D)$ vanishes at a point of $D$. (Note that this complex is therefore locally acyclic, as opposed to the complex  $\A^\bullet_X$, to which the usual Poincar\'e lemma applies,  which has cohomology in degree zero.)  
		This is a local statement and we reduce to the case where $X=\{(z_1,\ldots,z_n)\in\CC^n\; , \; |z_i|<1\}$ is a polydisk and $D=\{z_1\cdots z_s=0\}$ is a union of coordinate hyperplanes, with $s\geq 1$. Following the classical proof of the Poincar\'{e} lemma (see e.g. \cite[\S 4]{bottandtu}), we produce a contracting homotopy $h$ for the complex $\Gamma(X,\A^\bullet_X(\log D)(-D))$. An element $\nu$ of this complex can be uniquely written as a linear combination of forms
		$$\nu = f\,dz_I\wedge d\overline{z}_J$$
		with $I$ and $J$ subsets of $\{1,\ldots,n\}$ (we use the shorthand notations $dz_I=\bigwedge_{i\in I}dz_i$ and $d\overline{z}_J=\bigwedge_{j\in J}d\overline{z}_j$) and $f$ a smooth function such that $f/z_i$ is smooth on $X$ for every $i\in \{1,\ldots,s\}\min I$.  We set 
		$$h(\nu) = \left(\int_0^{z_1} f(u_1,z_2,\ldots,z_n)\, du_1\right) dz_{I\min \{1\}}d\overline{z}_J$$
		if $I$ contains $1$, and $h(\nu)=0$ otherwise. By using the change of variables $u_1=uz_1$ one rewrites the above integral as $z_1F$ with $F$ a smooth function on $X$, hence $h(\nu)$ is indeed an element of $\Gamma(X,\A^{\bullet-1}_X(\log D)(-D))$. One readily verifies, as in [\textit{loc. cit.}], that the commutator $d\circ h+h\circ d$ is the identity, and this finishes the proof of the second quasi-isomorphism. 
		\item Consider the following square:
		$$\xymatrix{
		\Gamma(X,j_!\A^\bullet_U) \ar[r]\ar[d] & \Gamma_\c(X,j_*\A^\bullet_U)^\vee[2n] \ar[d]\\
		\Gamma(X,\A^\bullet_X(\log D)(-D)) \ar[r] & \Gamma_\c(X,\overline{\A^\bullet_X(\log D)})^\vee [2n] 
		}$$
		where all maps respect the degree denoted by $\bullet$.
		The horizontal arrows are induced by
		$$ \nu \mapsto \int_X \nu\wedge (-) \ . $$ The vertical arrows are induced by the inclusions of complexes of 1. and therefore are quasi-isomorphisms because all the sheaves in question  are soft. The square is obviously commutative. The horizontal arrow along the top induces the pairing \eqref{eq:PDsimpler} and we only need to prove that the horizontal arrow along the bottom is a morphism of complexes.
		Equivalently, let $\nu$ be a section of $\A^\bullet_X(\log D)(-D)$ and let $\omega$ be a section  of $\A^\bullet_X(\log D)$ with compact support  such that the sum of their degrees is $2n-1$.  We need to prove that
		$$\int_X d(\nu\wedge \overline{\omega}) =  0 \ .$$
		Since   $\nu\wedge\overline{\omega}$ is not  in general  smooth on $X$, we cannot use the classical form of Stokes' theorem. But it is polar-smooth by Lemma \ref{lem:wedge product integrable simpler}  so we may work in the real-oriented blow-up $\widetilde{X}$. 
		By  Stokes' theorem for manifolds with corners \cite[Theorem 16.25]{leeintrosmoothmanifolds} we get
		$$\int_X d(\nu\wedge\overline{\omega}) = \sum_{i} \int_{\partial_i\widetilde{X}} (\nu\wedge\overline{\omega})_{|\partial_i\widetilde{X}}\ ,$$
		where the sum is over irreducible components $D_i$ of $D$, and $\partial_i\widetilde{X}$ denotes the sphere bundle of the normal bundle of $D_i$.  By a partition of unity argument  we are reduced to the local case where $X=\{(z_1,\ldots,z_n)\in\CC^n\; , \; |z_i|<1\}$ is a polydisk and $D=\{z_1\cdots z_s=0\}$ is a union of coordinate hyperplanes. Then 
		$$\widetilde{X} \simeq ([0,1)\times S^1)^s\times \{(z_{s+1},\ldots,z_n)\in \CC^{n-s}\; , \; |z_i|<1\}$$
		and the component $\partial_i\widetilde{X}$, for $i\in \{1,\ldots,s\}$, corresponds to the vanishing locus of the radial coordinate $\rho_i\in [0,1)$. By the local shape \eqref{eq:big wedge product} of a product $\nu\wedge\overline{\omega}$ we only need to consider  the two types of forms \eqref{eq:big wedge product polar}. Their restrictions to $\rho=0$ vanish in both cases.
		\end{enumerate}
		\end{proof}
		
		\begin{remark}\label{rem:without conjugation}
		The more natural version of Proposition \ref{prop:PD simpler log forms conjugate}, where we drop complex conjugation, also holds. The proof is even simpler because in this case the wedge product $\nu\wedge\omega$ is a smooth form on $X$, so $\int_X d(\nu\wedge\omega)$  vanishes by the classical version of Stokes' theorem. We will not use this in the rest of the article.
		\end{remark}

	\subsection{The case of two normal crossing divisors}\label{par:casetwoncd}
		We now fix a compact complex manifold $X$  with two divisors $A,B\subset X$ with no common irreducible component, such that $A\cup B$ is a simple normal crossing divisor. We  use  the complex of sheaves
		\begin{equation}\label{eq:log forms AB}
		\A^\bullet_X(\log A\cup B)(-B)\ .
		\end{equation}
		Its sections have logarithmic poles along $A$ and vanish along $B$. 
		We now show that it  computes the relative  cohomology of the pair $(X\min A, B\min A\cap B)$. We use the notation $H^\bullet_{\dR,\mathcal{C}^\infty}$ for the smooth version of de Rham cohomology, i.e., the cohomology of complexes of smooth differential forms. For instance, $H^\bullet_{\mathrm{dR},\mathcal{C}^\infty}(X\min A \lmod B)$ is defined to be the cohomology of the complex of sheaves $\Gamma(X\min A,(j_{X\min A\cup B}^{X\min A})_!\A^\bullet_{X\min A\cup B})$.
		
		\begin{proposition}\label{prop:5qis}
		We have a commutative diagram where every arrow is a quasi-isomorphism of complexes:
		$$\xymatrixcolsep{0.05pc}\xymatrix{
		(j_{X\min B}^X)_!(j_{X\min A\cup B}^{X\min B})_*\mathcal{A}^\bullet_{X\min A\cup B} \ar[rr]&  & (j_{X\min A}^X)_*(j_{X\min A\cup B}^{X\min A})_!\mathcal{A}^\bullet_{X\min A\cup B} \ar[d] \\
		(j_{X\min B}^X)_!\mathcal{A}^\bullet_{X\min B}(\log A\min B) \ar[u]\ar[dr]& & (j_{X\min A}^X)_*\mathcal{A}^\bullet_{X\min A}(\log B\min A)(-B\min A)  \\
		& \mathcal{A}^\bullet_X(\log A\cup B)(-B) \ar[ur] &
		}$$
		where $A\min B$ denotes $A \min (A \cap B)$ and $B\min A$ denotes $B\min (A\cap B)$.
		Thus, we have a canonical isomorphism
		$$H^r_{\dR,\mathcal{C}^\infty}(X\min A \lmod B) \simeq H^r(\Gamma(X,\A^\bullet_X(\log A\cup B)(-B))\ .$$
		\end{proposition}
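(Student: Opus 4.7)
The strategy is to verify that each arrow in the diagram is a quasi-isomorphism; the cohomological identification then follows because all sheaves involved are $\mathcal{C}^\infty_X$-modules, hence soft, and $\Gamma(X,-)$ preserves quasi-isomorphisms of complexes of soft sheaves. The three outer arrows in the diagram --- the horizontal arrow $(1)\to(2)$ and the verticals $(3)\to(1)$ and $(2)\to(4)$ --- are handled by applying earlier results. The top horizontal arrow is the base change morphism of Proposition \ref{prop: commutejj}, but applied to the soft resolution $\A^\bullet_{X\min A\cup B}$ of $\CC_{X\min A\cup B}$ in place of $\QQ_{X\min A\cup B}$; the local product reduction of the proof of Proposition \ref{prop: commutejj} goes through verbatim. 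For the two vertical arrows, apply the exact functor $(j_{X\min B}^X)_!$ (resp.\ the pushforward $(j_{X\min A}^X)_*$, which agrees with $R(j_{X\min A}^X)_*$ because we apply it to complexes of soft sheaves) to the log--de Rham resolutions of Proposition \ref{prop:PD simpler log forms conjugate} item 1, with Remark \ref{rem:without conjugation} permitting us to drop the complex conjugation, applied respectively to the one-divisor pairs $(X\min B, A\min B)$ and $(X\min A, B\min A)$.

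It remains to treat the two arrows involving (5). The map $(3)\to(5)$ is the natural zero-extension inclusion: a local section of $(j_{X\min B}^X)_!\A^\bullet_{X\min B}(\log A\min B)$ vanishes in a neighbourhood of $B$, hence extends by zero to a smooth form on $X$ with log poles along $A\cup B$ that vanishes along $B$, i.e., a section of $\A^\bullet_X(\log A\cup B)(-B)$. The map $(5)\to(4)$ is restriction to $X\min A$ composed with the canonical inclusion into the pushforward. Since the diagram commutes and the three outer arrows are already quasi-isomorphisms, it suffices to establish that \emph{one} of $(3)\to(5)$, $(5)\to(4)$ is a quasi-isomorphism.

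The main work --- and the main obstacle --- is this last local check. The question is local on $X$, so one reduces to a polydisk chart $X=\{(z_1,\ldots,z_n)\in\CC^n : |z_i|<1\}$ with $A=\{z_1\cdots z_s=0\}$ and $B=\{z_{s+1}\cdots z_{s+t}=0\}$. Writing $X = X_A\times X_B\times X_0$ with $A = A'\times X_B\times X_0$ and $B = X_A\times B'\times X_0$, the product structure of a polydisk together with the fact that the logarithmic and vanishing conditions involve disjoint sets of coordinates yields exterior-product decompositions
\[
\A^\bullet_X(\log A\cup B)(-B) \;\simeq\; \A^\bullet_{X_A}(\log A')\boxtimes \A^\bullet_{X_B}(\log B')(-B')\boxtimes \A^\bullet_{X_0},
\]
\[
(j_{X\min A}^X)_*\A^\bullet_{X\min A}(\log B\min A)(-B\min A) \;\simeq\; (j_{X_A\min A'}^{X_A})_*\A^\bullet_{X_A\min A'}\boxtimes \A^\bullet_{X_B}(\log B')(-B')\boxtimes \A^\bullet_{X_0}.
\]
Under these decompositions the arrow $(5)\to(4)$ becomes the tensor product of the log--de Rham quasi-isomorphism $\A^\bullet_{X_A}(\log A')\hookrightarrow (j_{X_A\min A'}^{X_A})_*\A^\bullet_{X_A\min A'}$ of Proposition \ref{prop:PD simpler log forms conjugate} with the identity on the remaining two factors; hence it is itself a quasi-isomorphism by Künneth. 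The delicate point is simply to check that the restriction map defining $(5)\to(4)$ respects the local Künneth decomposition, which is immediate from the coordinate-wise definition of the tensor product structure.
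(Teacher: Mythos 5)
Your treatment of the three outer arrows and the reduction to a single remaining arrow agrees with the paper: the top horizontal arrow via Proposition \ref{prop: commutejj} plus the Poincar\'e lemma, the two vertical arrows via the first part of Proposition \ref{prop:PD simpler log forms conjugate} applied to $(X\min B, A\min B)$ and $(X\min A, B\min A)$ (using exactness of $(j_{X\min B}^X)_!$ and softness for the pushforward), and the two-out-of-three argument in the commuting triangle is exactly how the paper dispatches the fifth arrow. Where you diverge is in the choice of which remaining arrow to prove directly: the paper shows that $(3)\to(5)$ is a quasi-isomorphism by checking that the cohomology sheaves of $\A^\bullet_X(\log A\cup B)(-B)$ vanish at points of $B$, via a contracting homotopy obtained by integrating against a local equation of an irreducible component of $B$; you instead attack $(5)\to(4)$ through a local K\"unneth decomposition.

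That K\"unneth step is where there is a genuine gap. The displayed ``exterior-product decompositions'' are not isomorphisms of complexes of sheaves: a smooth form on a product of polydisks is not a finite sum of exterior products of smooth forms on the factors (e.g. $e^{z_1\overline{z}_{s+1}}$ does not lie in $C^\infty(X_A)\otimes C^\infty(X_B)$), so $\A^\bullet_X(\log A\cup B)(-B)$ only \emph{receives} a natural map from $\A^\bullet_{X_A}(\log A')\boxtimes\A^\bullet_{X_B}(\log B')(-B')\boxtimes\A^\bullet_{X_0}$, and the analogous claim for $(4)$ additionally requires justifying that $(j_{X\min A}^X)_*$ commutes with $\boxtimes$. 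Showing that these natural maps are quasi-isomorphisms is itself a local Poincar\'e-lemma-with-constraints statement, i.e.\ essentially the contracting-homotopy computation you were hoping to bypass; as written, the ``main work'' of your proof rests on an identification that does not hold. The most economical repair is to switch to the other arrow, as the paper does: $(3)\to(5)$ is an isomorphism of sheaves over $X\min B$, so it suffices to prove exactness of $\A^\bullet_X(\log A\cup B)(-B)$ at a point of $B$, and the homotopy $h(\nu)=\bigl(\int_0^{z_{s+1}}f\,du\bigr)\,dz_{I\min\{s+1\}}\wedge d\overline{z}_J$ from the proof of Proposition \ref{prop:PD simpler log forms conjugate} (integration with respect to a coordinate cutting out a component of $B$ through the point) applies verbatim, the substitution $u=tz_{s+1}$ showing that $h(\nu)$ retains the logarithmic poles along $A$ and the vanishing along $B$, hence stays in the subcomplex.
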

		
		\begin{proof}
		Every arrow  is an  inclusion morphism  and  so the diagram commutes. The horizontal arrow along the top is a quasi-isomorphism by Proposition \ref{prop: commutejj} and the Poincar\'{e} lemma $\CC_{X\min A\cup B}\simeq \A^\bullet_{X\min A\cup B}$.	The first part of Proposition \ref{prop:PD simpler log forms conjugate} applied to $(X\min B,A\min B)$ and $(X\min A,B\min A)$ implies that the vertical arrows on both sides are quasi-isomorphisms (for the vertical arrow on the left-hand side we use the version without complex conjugation which is explained in the proof of Proposition \ref{prop:PD simpler log forms conjugate}). 
		Now we claim that the inclusion
		$$(j_{X\min B}^X)_!\A^\bullet_{X\min B}(\log A\min B) \hookrightarrow \A^\bullet_X(\log A\cup B)(-B)$$
		is a quasi-isomorphism. This inclusion is an isomorphism on restriction to $X\min B$, so it is enough to prove that the cohomology sheaf of the complex $\A^\bullet_X(\log A\cup B)(-B)$ vanishes at  a point of $B$. We leave it the reader to mimick the proof of the first part of Proposition \ref{prop:PD simpler log forms conjugate} in order to produce a contracting homotopy for the local sections of this complex of sheaves around a point of $B$ (integrate with respect to a local equation for an irreducible component of $B$).
		 Since the diagram commutes, the remaining arrow is also a quasi-isomorphism. The last statement follows from the fact that all complexes appearing in the diagram consist of soft sheaves.
		\end{proof}
		
		The next lemma and proposition generalise Lemma \ref{lem:wedge product integrable simpler} and Proposition \ref{prop:PD simpler log forms conjugate}.
		
		\begin{lemma}\label{lem:wedge product integrable}
		For $\nu$ a section of $\A^\bullet_X(\log A\cup B)(-A)$ and $\omega$ a section of $\A^\bullet_X(\log A\cup B)(-B)$, the wedge products $\nu\wedge\overline{\omega}$ and $\overline{\nu}\wedge\omega$ are polar-smooth forms on $(X,A\cup B)$.
		\end{lemma}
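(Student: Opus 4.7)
The plan is to run the same local analysis that was used in the proof of Lemma \ref{lem:wedge product integrable simpler}, but now splitting the indexing of components according to whether they belong to $A$ or $B$. As before, the case $\overline{\nu}\wedge\omega$ will follow from the case $\nu\wedge\overline{\omega}$ by complex conjugation (swapping the roles of $A$ and $B$), so I will only treat the first.

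Since the statement of polar-smoothness is local on $X$, I reduce to the chart $X=\{(z_1,\ldots,z_n)\in\CC^n\;,\;|z_i|<1\}$ in which
$$A=\{z_i=0\;,\;i\in S_A\}\ ,\qquad B=\{z_j=0\;,\;j\in S_B\}\ ,$$
with $S_A,S_B\subset\{1,\ldots,s\}$ disjoint, reflecting the hypothesis that $A$ and $B$ have no common irreducible component. In these coordinates, a section of $\A^\bullet_X(\log A\cup B)(-A)$ is a finite sum of forms
$$\nu = \Big(\prod_{i\in S_A} z_i\Big)\,\alpha\wedge \bigwedge_{i\in I}\frac{dz_i}{z_i}\ ,$$
with $I\subseteq S_A\cup S_B$ and $\alpha$ a smooth form on $X$; analogously $\omega$ is a sum of terms
$$\omega = \Big(\prod_{j\in S_B} z_j\Big)\,\beta\wedge\bigwedge_{j\in J}\frac{dz_j}{z_j}\ .$$

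The key step is then to expand $\nu\wedge\overline{\omega}$ in analogy with \eqref{eq:big wedge product} and analyse it factor by factor. After collecting terms, one sees that every index $i\in S_A$ contributes either $dz_i$ (if $i\in I$, $i\notin J$), or $\frac{z_i}{\overline{z}_i}\,d\overline{z}_i$ (if $i\in J$, $i\notin I$), or $\frac{dz_i\wedge d\overline{z}_i}{\overline{z}_i}$ (if $i\in I\cap J$); similarly each $j\in S_B$ contributes $d\overline{z}_j$, $\frac{\overline{z}_j}{z_j}\,dz_j$, or $\frac{dz_j\wedge d\overline{z}_j}{z_j}$. All of these are polar-smooth by \eqref{eq:big wedge product polar} (or its complex conjugate), and the remaining factor is smooth on $X$. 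A wedge product of polar-smooth forms with a smooth form is polar-smooth, which concludes the argument.

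There is no real obstacle here: the only point to be careful about is the bookkeeping ensuring that, thanks to the disjointness $S_A\cap S_B=\varnothing$, the vanishing factor $\prod_{i\in S_A}z_i$ coming from $\nu$ is precisely matched against the denominators $\overline{z}_i$ arising from the logarithmic poles of $\overline{\omega}$ along components of $A$, and symmetrically for the factor $\prod_{j\in S_B}\overline{z}_j$ coming from $\overline{\omega}$. This is exactly what ensures that no uncancelled pole of the form $1/\overline{z}_i$ or $1/z_j$ survives in the product, so that only the polar-smooth building blocks of \eqref{eq:big wedge product polar} appear.
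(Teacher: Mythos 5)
Your proposal is correct and is precisely the argument the paper intends: its own proof of this lemma simply says it is ``similar to that of Lemma \ref{lem:wedge product integrable simpler} and is left to the reader,'' and you have carried out exactly that local bookkeeping, correctly matching the vanishing factor along $A$ (resp.\ $B$) against the antiholomorphic (resp.\ holomorphic) logarithmic poles so that only the building blocks of \eqref{eq:big wedge product polar} and their conjugates appear.
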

		
		\begin{proof}
		The proof is similar to that of Lemma \ref{lem:wedge product integrable simpler} and is left to the reader.
		\end{proof}		
		
		\begin{proposition}\label{prop:PD log forms conjugate}
		Under the identifications
		$$H^r_{\dR,\mathcal{C}^\infty}(X\min B \lmod A)\simeq H^r(\Gamma(X,\A^\bullet_X(\log A\cup B)(-A)))$$
		and
		$$H^{2n-r}_{\dR,\mathcal{C}^\infty}(X\min A  \lmod B)\simeq H^{2n-r}(\Gamma(X,\overline{\A^\bullet_X(\log A\cup B)(-B)}))$$
		induced by Proposition \ref{prop:5qis}, the pairing \eqref{eq:PDNCDpairing} is induced in (smooth) de Rham cohomology by the pairing of complexes: 
		\begin{eqnarray} \Gamma(X,\A^\bullet_X(\log A\cup B)(-A))\otimes \Gamma(X,\overline{\A^\bullet_X(\log A\cup B)(-B)})  & \To & \CC[2n]  \nonumber \\  \nonumber 
		\nu\otimes \overline{\omega} \qquad \qquad & \mapsto & \int_X \nu\wedge\overline{\omega}\ .
		\end{eqnarray} 
		\end{proposition}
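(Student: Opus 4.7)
The strategy is to mirror the two-step proof of Proposition \ref{prop:PD simpler log forms conjugate}(2), replacing the single divisor $D$ by the pair $(A,B)$. I would set up a commutative square whose top row is the claimed pairing
\[
\Gamma(X, \A^\bullet_X(\log A\cup B)(-A))\otimes\Gamma(X,\overline{\A^\bullet_X(\log A\cup B)(-B)}) \To \CC[2n]\,,\quad \nu\otimes\overline{\omega}\mapsto \int_X \nu\wedge\overline{\omega}\,,
\]
and whose bottom row is the abstract Poincar\'{e}--Verdier pairing at the level of soft resolutions of $\mathcal{F}(B,A)[-n]$ and $\mathcal{F}(A,B)[-n]$ by smooth forms. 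The vertical maps would be provided by Proposition \ref{prop:5qis} applied to the triples $(X,A,B)$ and $(X,B,A)$, the second after complex conjugation (which is harmless since the sheaves of smooth forms are stable under conjugation). The bottom pairing is again given by integration of the wedge product: this is the standard realisation, on soft resolutions, of the Verdier duality isomorphism $\mathbb{D}_X\mathcal{F}(B,A)\simeq \mathcal{F}(A,B)(n)$ established in the proof of Theorem \ref{thm: PDNCD}.

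Commutativity of the square is automatic, since by Proposition \ref{prop:5qis} the vertical arrows are inclusions of sheaves of forms and both horizontal arrows are defined by the same integration formula. The genuine content therefore reduces to verifying that the top row is well-defined as a morphism of complexes of $\CC$-vector spaces. Integrability of $\nu\wedge\overline{\omega}$ is immediate from Lemma \ref{lem:wedge product integrable} together with the compactness of $X$: the form $\nu\wedge\overline{\omega}$ is polar-smooth on $(X,A\cup B)$, hence pulls back to a smooth top-degree form on the compact manifold with corners $\widetilde{X}$ obtained by the iterated real-oriented blow-up of $X$ along the components of $A\cup B$.

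The main obstacle is the compatibility with differentials, which amounts to the Stokes-type identity
\[
\int_X d(\nu\wedge\overline{\omega}) = 0 \qquad (\deg\nu+\deg\omega=2n-1)\,.
\]
I would follow exactly the argument at the end of the proof of Proposition \ref{prop:PD simpler log forms conjugate}(2): lift everything to $\widetilde{X}$, apply Stokes' theorem for manifolds with corners, and prove that the pullback of $\nu\wedge\overline{\omega}$ restricts to zero on each boundary sphere bundle $\partial_i\widetilde{X}$ over an irreducible component of $A\cup B$. By a partition of unity argument this reduces to the local model where $X$ is a polydisk and $A\cup B$ is the union of the first $a+b$ coordinate hyperplanes, with $A=\{z_1\cdots z_a=0\}$ and $B=\{z_{a+1}\cdots z_{a+b}=0\}$; there $\nu\wedge\overline{\omega}$ has the same monomial shape as in \eqref{eq:big wedge product}. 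The key bookkeeping is that, along a component of $A$, the needed vanishing factor is supplied by the $(-A)$-vanishing of $\nu$, while along a component of $B$ it is supplied by the $(-B)$-vanishing of $\overline{\omega}$; in either case the restriction to $\rho_i=0$ contains a factor $d\rho_i|_{\rho_i=0}=0$, exactly as for the two model forms \eqref{eq:big wedge product polar}.
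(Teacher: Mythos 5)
Your strategy is the paper's: reduce to the one-divisor case, get absolute convergence from Lemma \ref{lem:wedge product integrable}, and prove compatibility with differentials by lifting to the real-oriented blow-up $\widetilde{X}$ and applying Stokes' theorem for manifolds with corners, with exactly the local bookkeeping you describe (the $(-A)$-vanishing of $\nu$ kills the boundary contribution along components of $A$, the $(-B)$-vanishing of $\overline{\omega}$ along components of $B$). That analytic core is correct and is where the paper also puts the work. The one place you are too quick is the diagram itself. There is no single commutative square: Proposition \ref{prop:5qis} relates $\A^\bullet_X(\log A\cup B)(-A)$ to a soft resolution of $\mathcal{F}(B,A)[-n]$ only through a zig-zag of inclusions, and --- more importantly --- the assertion that the abstract duality pairing ``is given by integration on soft resolutions'' is precisely what needs to be established, not a fact one can quote: the isomorphism $\mathbb{D}_X\mathcal{F}(B,A)\simeq \mathcal{F}(A,B)(n)$ passes through Proposition \ref{prop: commutejj}, which permutes the $!$- and $*$-extensions, and integration is only known a priori to realise the pairing in the one-divisor situation \eqref{eq:PDsimpler}. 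The paper therefore uses a four-row diagram: it first identifies $\Gamma(X,-)$ with $\Gamma_\c(X\min B,-)$ for sheaves extended by zero across $B$ (using compactness of $X$), so that the reference pairing becomes \eqref{eq:PDsimpler} for the open $X\min A\cup B$ inside the non-compact manifold $X\min B$; the commutativity of the resulting middle square is exactly the content of Proposition \ref{prop:PD simpler log forms conjugate} applied to $(X\min B, A\min B)$, and only the bottom square --- passing to logarithmic forms on all of $X$ --- is new, which is where your Stokes argument enters. So your proof becomes the paper's once the single ``automatic'' square is replaced by this chain of squares.
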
    
		
		\begin{proof}
		We form the following commutative diagram:
		$$\xymatrix{
		\Gamma(X,(j_{X\min B}^X)_*(j_{X\min A\cup B}^{X\min B})_!\A^\bullet_{X\min A\cup B}) \ar[r] & \Gamma(X,(j_{X\min B}^X)_!(j_{X\min A\cup B}^{X\min B})_*\A^\bullet_{X\min A\cup B})^\vee[2n] \\
		\Gamma(X\min B,(j_{X\min A\cup B}^{X\min B})_!\A^\bullet_{X\min A\cup B}) \ar[u]^{\simeq}\ar[r]& \Gamma_\c(X\min B,(j_{X\min A\cup B}^{X\min B})_*\A^\bullet_{X\min A\cup B})^\vee[2n] \ar[u]_{\simeq}\\
		\Gamma(X\min B,\A^\bullet_{X\min B}(\log A\min B)(-A\min B)) \ar[r]  \ar[u]\ar[d]& \Gamma_\c(X\min B,\overline{\A^\bullet_{X\min B}(\log A\min B)})^\vee[2n] \ar[u]\ar[d] \\
		\Gamma(X,\A^\bullet_X(\log A\cup B)(-A)) \ar[r] & \Gamma(X,\overline{\A^\bullet_X(\log A\cup B)(-B)})^\vee[2n]
		}$$
		The vertical arrows of the top square are isomorphisms. The vertical arrows on the right (top and bottom) use the fact that $\Gamma(X, -)=\Gamma_\c(X, -)$ since $X$ is compact.  The bottom two horizontal arrows are induced by 
		 $\nu \mapsto \int_{X\min B}\nu\wedge (-)$ and $\nu\mapsto \int_X\nu\wedge (-)$ respectively. 
		 This makes sense for the bottom arrow by  Lemma \ref{lem:wedge product integrable}. 
		The commutativity of the  middle square is the content of Proposition \ref{prop:PD simpler log forms conjugate} for $(X\min B, A\min B)$ since, in the statement of Proposition 3.10,  $X$ is not necessarily compact. The vertical arrows in the bottom square are quasi-isomorphisms induced by inclusion maps as in Proposition \ref{prop:5qis}. It remains to prove that the bottom horizontal arrow  is a morphism of complexes. This is done in the same way as in the proof of Proposition \ref{prop:PD simpler log forms conjugate} by working in the real-oriented blow-up $\widetilde{X}$. The proposition follows since the isomorphisms in the statement are induced by composing the vertical quasi-isomorphisms.
		\end{proof}
		
		\begin{remark}
		As in Remark \ref{rem:without conjugation}, the more natural version of Proposition \ref{prop:PD log forms conjugate}, where we drop complex conjugation, also holds, but we will not use it.
		\end{remark}
		
	\subsection{Computing the single-valued period map with differential forms}
	We recast  the previous discussion in terms of the single-valued period map. Let us fix $k=\QQ$ for simplicity (see Remark \ref{rem:sv recipe number field} below for the case of a general number field) and let $(X,A,B)$ satisfy $(\star)_\QQ$. For any integer $r$, consider the object of $\mathcal{H}:$
		$$H=H^{2n-r}(X\min A \lmod B )$$ 
	 defined in  Example \ref{ex: cohomXAB}. By Theorem \ref{thm: PDNCD} we have an isomorphism in $\mathcal{H}$:
		$$H^\vee\simeq H^{r}(X\min B \lmod A)(n)\ .$$
		The fact that this isomorphism preserves the rational structures in de Rham cohomology and is compatible with the action of the real Frobenius in the Betti realisation is not in fact proved in  Theorem \ref{thm: PDNCD}. It is true, but will not be needed in this paper.  \medskip
		
			Let $\omega$ be a closed differential form in $\Gamma(X,\A^{2n-r}_X(\log A\cup B)(-B))$  and denote by $[\omega]$ the corresponding class in $H_{\mathrm{dR}}\otimes_\QQ\CC\simeq H^{2n-r}_{\mathrm{dR},\mathcal{C}^\infty}(X\min A \;\mathrm{mod}\; B)$. Let $\nu$ be a closed differential form in $\Gamma(X,\A^{r}_X(\log A\cup B)(-A))$ and denote by $[\nu(n)]$ the  class in $H_{\mathrm{dR}}^\vee\otimes_\QQ\CC$ which corresponds to the class of $(2\pi i)^{-n}\nu$ in $H^{r}_{\mathrm{dR},\mathcal{C}^\infty}(X\min B \;\mathrm{mod}\; A)$. We then have a matrix coefficient
		$$[H,[\nu(n)],[\omega]]^{\mm,\dR} \in \mathcal{P}^{\mm,\dR}_{\mathcal{H}}\otimes_\QQ\CC\ .$$
		By the results proved earlier, every matrix coefficient can be represented using differential  forms $\omega, \nu$ of this type, so the following theorem covers the general case.
		
		\begin{theorem}\label{thm:sv recipe general}
		 The single-valued map is computed by the following formula:
		$$\s\,[H,[\nu(n)],[\omega]]^{\mm,\dR} = (2\pi i)^{-n}\int_{X(\CC)} \nu\wedge \mathrm{conj}^*(\omega) = (-2\pi i)^{-n}\int_{X(\CC)} \mathrm{conj}^*(\nu)\wedge\omega\ ,$$
		where $\mathrm{conj}:X(\CC)\rightarrow X(\CC)$ denotes complex conjugation.
		\end{theorem}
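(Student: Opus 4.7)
The plan is to unfold the definition of the single-valued period on matrix coefficients, identify the action of $\s$ on analytic de Rham cohomology as pullback by complex conjugation, and then apply the integral formula of Proposition \ref{prop:PD log forms conjugate}.

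First, I would unpack the definition: by construction, $\s\,[H,[\nu(n)],[\omega]]^\mdR = [\nu(n)](\s[\omega])$. Under the Poincar\'e--Verdier duality of Theorem \ref{thm: PDNCD}, the class $[\nu(n)] \in H_\dR^\vee \otimes_\QQ \CC$ corresponds by definition to $(2\pi i)^{-n}\nu \in H^{r}_{\dR,\mathrm{an}}(X\min B \lmod A)$, so the evaluation rewrites as $(2\pi i)^{-n}\langle [\nu], \s[\omega]\rangle$, where $\langle -,-\rangle$ denotes the Poincar\'e--Verdier pairing after extending coefficients to $\CC$. The key step is then to show that under the comparison $c\colon H_\dR\otimes_\QQ\CC \simeq H^{2n-r}_{\dR,\mathrm{an}}(X\min A\lmod B)$, the operator $\s$ coincides with pullback $\mathrm{conj}^*$ along the complex conjugation involution $\mathrm{conj}\colon X(\CC)\to X(\CC)$. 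By definition $\s = c^{-1}\circ (F_\infty\otimes\id)\circ c$; the real Frobenius is by construction induced on singular cohomology by $\mathrm{conj}$, and its $\CC$-linear extension $F_\infty\otimes\id$ is the $\CC$-linear extension of this pullback. By naturality of the de Rham--Betti comparison with respect to the continuous involution $\mathrm{conj}$ (both sides compute the cohomology of $\CC_{X(\CC)}$, which pulls back to itself), $F_\infty\otimes\id$ corresponds under $c$ to $\mathrm{conj}^*$ on analytic de Rham. Hence $\s[\omega]$ is represented by the form $\mathrm{conj}^*(\omega)$.

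This form is a section of the complex conjugate complex $\overline{\A^\bullet_X(\log A\cup B)(-B)}$: pullback by $\mathrm{conj}$ converts holomorphic logarithmic poles along $A\cup B$ into anti-holomorphic ones, and the vanishing along $B$ is preserved since $B$ is defined over $\QQ$ and hence $\mathrm{conj}$-invariant. Proposition \ref{prop:PD log forms conjugate} then yields $\langle [\nu], \s[\omega]\rangle = \int_{X(\CC)} \nu\wedge\mathrm{conj}^*(\omega)$, which combined with the first step proves the first equality of the theorem. The second equality follows from a change of variables $y=\mathrm{conj}(x)$: since $\mathrm{conj}^2=\id$ and the real Jacobian of complex conjugation on an $n$-dimensional complex manifold is $(-1)^n$, we obtain $\int_{X(\CC)}\nu\wedge\mathrm{conj}^*(\omega) = (-1)^n\int_{X(\CC)}\mathrm{conj}^*(\nu)\wedge\omega$, and the prefactor $(2\pi i)^{-n}$ absorbs the sign to give $(-2\pi i)^{-n}$.

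The main obstacle is the key identification $\s = \mathrm{conj}^*$ on analytic de Rham cohomology. Conceptually this expresses the compatibility of the de Rham--Betti comparison with pullback along the real-analytic involution $\mathrm{conj}$; rigorously verifying it amounts to tracing through the conventions of the comparison isomorphism and of the $\CC$-linear extension, and ultimately reduces to the functoriality of sheaf cohomology for the constant sheaf. The remaining verifications (that $\mathrm{conj}^*(\omega)$ lies in the required complex conjugate log-complex, and the change-of-variables sign) are essentially formal.
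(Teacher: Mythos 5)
Your proposal is correct and follows essentially the same route as the paper's proof: unwind the matrix coefficient, identify $\s$ on analytic de Rham cohomology with $\mathrm{conj}^*$ via functoriality of the de Rham comparison, note that $\mathrm{conj}^*(\omega)$ lands in the conjugate logarithmic complex because $\mathrm{conj}$ is anti-holomorphic, and conclude by Proposition \ref{prop:PD log forms conjugate} and the orientation sign $(-1)^n$ for the second equality. No gaps.
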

		
		\begin{proof}
		By definition we have 
		$$\s\,[H,[\nu(n)],[\omega]]^{\mm,\dR} = \langle [\nu(n)],\s[\omega] \rangle$$
		where in the right-hand side $\s$ denotes the composite
		$$\s : H_{\mathrm{dR},\mathcal{C}^\infty} \simeq H_{\mathrm{dR}} \otimes \CC \overset{c}{\To}  H_{\mathrm{B}} \otimes \CC \overset{F_{\infty}\otimes id}{\To} H_{\mathrm{B}} \otimes \CC \overset{c^{-1}}{\To} H_{\mathrm{dR}} \otimes \CC\simeq H_{\mathrm{dR},\mathcal{C}^\infty}\ .$$ 
		Recall that $F_\infty$ is induced in singular cohomology by $\mathrm{conj}$. By the functoriality of the comparison isomorphism between (smooth) de Rham and singular cohomology (the de Rham theorem), the above composite is  induced in (smooth) de Rham cohomology by $\mathrm{conj}$, i.e. $\s[\omega]=[\mathrm{conj}^*(\omega)]$. We note that $\mathrm{conj}^*(\omega)$ is a global section of $\overline{\A^{2n-r}_X(\log A\cup B)(-B)}$ because $\mathrm{conj}$ is anti-holomorphic. The first equality is then a restatement of Proposition \ref{prop:PD log forms conjugate}. The second equality follows on applying the change of variables  $\mathrm{conj}$ and noting that since it is anti-holomorphic, it multiplies the orientation of $X(\CC)$ by the sign $(-1)^n$.
		\end{proof}
		
	We are mostly interested  in the case where the differential forms $\omega$ and $\nu$ are algebraic. We state the following important special case as a separate theorem for future reference.  For $X$ a smooth projective variety of dimension $n$ over $\QQ$ and $D$ a normal crossing divisor in $X$, let  $\Omega^n_{X/\QQ}(\log D)$ denote the (Zariski) sheaf of algebraic $n$-forms with logarithmic singularities along $D$. Every such form is necessarily closed for degree reasons and we thus get a $\QQ$-linear map $\Gamma(X,\Omega^n_{X/\QQ}(\log D))\rightarrow H^n_\dR(X\min D)$ (see  Proposition \ref{prop:global log Hodge} for a Hodge-theoretic interpretation). If $(X,A,B)$ satisfies $(\star)_\QQ$ then every global section of $\Omega^n_{X/\QQ}(\log A)$ vanishes along $B$ for degree reasons and we get a $\QQ$-linear map $\Gamma(X,\Omega^n_{X/\QQ}(\log A))\rightarrow H^n_\dR(X\min A\;\mathrm{mod}\; B)$.

		\begin{theorem}\label{thm:sv recipe}
		 Let $(X,A,B)$ satisfy $(\star)_\QQ$ and set $H=H^n(X\min A \;\mathrm{mod}\; B)\in\mathcal{H}$. For $\omega\in \Gamma(X,\Omega^n_{X/\QQ}(\log A))$ and $\nu\in \Gamma(X,\Omega^n_{X/\QQ}(\log B))$ we have a matrix coefficient
		 $$[H,[\nu(n)],[\omega]]^{\mm,\dR} \in \mathcal{P}^{\mm,\dR}_{\mathcal{H}}\otimes_\QQ\CC$$
		 whose single-valued period is computed by the formula:
		$$\s\,[H,[\nu(n)],[\omega]]^{\mm,\dR} = (2\pi i)^{-n}\int_{X(\CC)} \nu\wedge\overline{\omega} = (-2\pi i)^{-n}\int_{X(\CC)} \overline{\nu}\wedge\omega\ .$$
		\end{theorem}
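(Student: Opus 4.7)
My plan is to reduce this statement to Theorem \ref{thm:sv recipe general} by verifying that the algebraic logarithmic $n$-forms $\omega$ and $\nu$ yield $\QQ$-rational de Rham classes whose analytifications satisfy the hypotheses of that theorem, and by using the elementary fact that complex conjugation and pullback by $\mathrm{conj}$ agree on $\QQ$-algebraic holomorphic forms.

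First I would check that $\omega\in\Gamma(X,\Omega^n_{X/\QQ}(\log A))$ defines a well-defined class $[\omega]\in H_\dR=H^n_\dR(X\setminus A\,\lmod\, B)$ over $\QQ$. Since $\deg\omega=n=\dim X$, the form is automatically $d$-closed, and since it has no pole along $B$ one sees in local coordinates (with $z_1=0$ a local equation for an irreducible component of $B$) that $\omega$ lies in the natural subsheaf
$$\Omega^n_{X/\QQ}(\log A)\hookrightarrow \Omega^n_{X/\QQ}(\log A\cup B)(-B).$$
The Zariski hypercohomology of the algebraic logarithmic complex $\Omega^\bullet_{X/\QQ}(\log A\cup B)(-B)$ computes $H_\dR$ (the algebraic counterpart of Proposition \ref{prop:5qis}, due to Deligne), so $\omega$ represents a $\QQ$-rational class. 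The symmetric argument gives $[\nu]\in H^n_\dR(X\setminus B\,\lmod\, A)$ over $\QQ$, and the rational Poincar\'e--Verdier duality of Theorem \ref{thm: PDNCD} produces $[\nu(n)]\in H_\dR^\vee$, so that the matrix coefficient $[H,[\nu(n)],[\omega]]^\mdR$ belongs to $\mathcal{P}^\mdR_\mathcal{H}$ and not merely to its complexification.

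Next, via GAGA, I would identify the analytifications of $\omega$ and $\nu$ with closed sections of $\mathcal{A}^n_X(\log A\cup B)(-B)$ and $\mathcal{A}^n_X(\log A\cup B)(-A)$ representing the same cohomology classes under the analytic comparison of Proposition \ref{prop:5qis}. Invoking Theorem \ref{thm:sv recipe general} with $r=n$ then yields
$$\s\,[H,[\nu(n)],[\omega]]^\mdR=(2\pi i)^{-n}\int_{X(\CC)}\nu\wedge\mathrm{conj}^*(\omega)=(-2\pi i)^{-n}\int_{X(\CC)}\mathrm{conj}^*(\nu)\wedge\omega.$$
Since $\omega$ is algebraic over $\QQ$, its coefficients in any local algebraic chart lie in $\QQ\subset\RR$, so a pointwise computation shows $\mathrm{conj}^*(\omega)=\overline{\omega}$ as a smooth form, and likewise $\mathrm{conj}^*(\nu)=\overline{\nu}$. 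This gives the stated formulae.

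The only substantive input beyond Theorem \ref{thm:sv recipe general} is the algebraic log-complex description of relative de Rham cohomology, which is what elevates the formula from an identity of complex numbers to an expression for a genuine $\QQ$-rational de Rham period; everything else is a direct specialization of Theorem \ref{thm:sv recipe general} combined with the compatibility between complex conjugation of holomorphic forms and the real involution $\mathrm{conj}$.
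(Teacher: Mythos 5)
Your proposal is correct and follows essentially the same route as the paper: the paper's proof likewise observes that $\omega$ and $\nu$ are global sections of $\A^n_X(\log A\cup B)(-B)$ and $\A^n_X(\log A\cup B)(-A)$ respectively, that being algebraic over $\QQ$ gives $\mathrm{conj}^*(\omega)=\overline{\omega}$ and $\mathrm{conj}^*(\nu)=\overline{\nu}$, and then invokes Theorem \ref{thm:sv recipe general}. The rationality discussion you include is handled in the paper in the paragraph preceding the theorem statement (and the paper explicitly remarks that the $\QQ$-rationality of the Verdier duality isomorphism, which you invoke for $[\nu(n)]$, is true but not actually proved there).
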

		
		\begin{proof}
		Clearly $\omega$ and $\nu$ are global sections of $\A^n_X(\log A\cup B)(-B)$ and $\A^n_X(\log A\cup B)(-A)$ respectively. Since they are algebraic defined over $\QQ$ we have $\mathrm{conj}^*(\omega)=\overline{\omega}$ and $\mathrm{conj}^*(\nu)=\overline{\nu}$, and the result follows from Theorem \ref{thm:sv recipe general}.
		\end{proof} 
		
		\begin{remark}
		The matrix coefficient $[H,[\nu(n)],[\omega]]^{\mm,\dR}$ actually lies in the rational vector space $\Pe^{\mm,\dR}_{\mathcal{H}}$, i.e.,  the class $[\nu(n)]$ lies in the rational vector space $H^n_\dR(X\min A\,\lmod B)^\vee$. This  follows from the fact (which we have claimed and not proved) that the duality isomorphism of Theorem \ref{thm: PDNCD} preserves the rational structures in de Rham cohomology. We will mostly be interested in situations when the class $[\nu(n)]$ arises from the de Rham projection (see \S\ref{sect: dRProj}), in which case we  show directly that it lies in $H^n_\dR(X\min A\,\lmod B)^\vee\otimes_{\QQ} K$ for a certain extension $K$ of $\QQ$  (see Lemma \ref{lem: coefficients c zero} and Remark \ref{rem: coefficients nu}).
		\end{remark}
		
		\begin{remark}\label{rem:sv recipe number field}
		Let $k$ be a number field and $\sigma:k\hookrightarrow \CC$ be a complex embedding. If $\sigma=\overline{\sigma}$ is real then 
		Theorems \ref{thm:sv recipe general} and \ref{thm:sv recipe} generalise easily with the same formulae  to the setting of the category $\mathcal{H}(k)$. In the general  case the Tannakian interpretation of $\s_{\sigma}$ is obscure (\S\ref{sect: TannakianNumberFields}) but  the formulae still make sense, when correctly interpreted. Let   $\mathrm{conj}_\sigma: X_\sigma(\CC) \rightarrow X_{\overline{\sigma}}(\CC)$ denote complex conjugation. Theorem \ref{thm:sv recipe general} should be replaced by the statement:
		$$\langle[\nu(n)],\s_\sigma [\omega]\rangle  = (2\pi i)^{-n}\int_{X_{\overline{\sigma}}(\CC)}\nu\wedge\mathrm{conj}_{\overline{\sigma}}^*(\omega) = (-2\pi i)^{-n} \int_{X_\sigma(\CC)} \mathrm{conj}_\sigma^*(\nu)\wedge\omega\ ,$$
		for $\omega$ (resp. $\nu$) a global section of $\A^{2n-r}_{X_\sigma(\CC)}(\log A_\sigma(\CC)\cup B_\sigma(\CC))(-B_\sigma(\CC))$ (resp. a global section of $\A^{r}_{X_{\overline{\sigma}}(\CC)}(\log A_{\overline{\sigma}}(\CC)\cup B_{\overline{\sigma}}(\CC))(-A_{\overline{\sigma}}(\CC))$). In the algebraic case, Theorem \ref{thm:sv recipe} should be replaced by the statement:
		$$\langle [\nu(n)],\s_\sigma [\omega] \rangle = (2\pi i)^{-n}\int_{X_{\overline{\sigma}}(\CC)}\nu_{\overline{\sigma}}\wedge\overline{\omega_{\overline{\sigma}}}  = (-2\pi i)^{-n} \int_{X_\sigma(\CC)} \overline{\nu_{\sigma}}\wedge\omega_\sigma\ ,$$
		for $\omega$ (resp. $\nu$) a global algebraic differential $n$-form on $X/k$ with logarithmic singularities along $A$  (resp. $B$). Here we have denoted by $\omega_\sigma$ the form on $X_\sigma(\CC)$ induced by $\omega$, and so on. Note that this last formula is consistent with the fact that $\overline{\s_\sigma}=\s_{\overline{\sigma}}$.
		\end{remark}
		
		\subsection{Double copy formula} 
		A byproduct of our integral formula for single-valued periods is a general and elementary `double copy' formula expressing singular volume integrals as quadratic expressions in ordinary period integrals.
		
		    \begin{corollary} \label{corDC} (Double copy formula). In the setting of Theorem \ref{thm:sv recipe} we have the equality:
            \begin{equation}  \label{DC}  \int_{X(\CC)}  \nu\wedge \overline{\omega}  =  \sum_{[\gamma],[\delta]} \langle  [\gamma]^{\vee}, [\delta]^{\vee} \rangle  \int_{\gamma} \nu \int_{\overline{\delta}}\omega 
            \end{equation} 
            where $[\gamma]$ ranges over a basis for $H_n^{\B}(X \backslash B \,\lmod A)$ and $[\gamma]^{\vee}$ denotes the dual basis, and similarly for $[\delta]$ with $A$, $B$ interchanged. The pairing on the right-hand side is the Verdier duality pairing \eqref{eq:PDNCDpairing} in Betti cohomology.
            \end{corollary}
          
            \begin{proof}
            According to Theorem \ref{thm:sv recipe} we have the equality
            $$\int_{X(\CC)}\nu\wedge\overline{\omega} = (2\pi i)^n\langle[\nu],\s[\omega]\rangle = (2\pi i)^n \langle[\nu],c^{-1}F_\infty c[\omega]\rangle$$
            where the pairing is the Verdier duality pairing \eqref{eq:PDNCDpairing} in de Rham cohomology, and the second equality is the definition of the single-valued pairing. Now the compatibility between Verdier duality and the comparison isomorphism implies that we have
            $$(2\pi i)^n\langle[\nu],c^{-1}F_\infty c[\omega]\rangle = \langle c[\nu],F_\infty c[\omega]\rangle\ ,$$
            where the pairing in the right-hand side is the Verdier duality pairing \eqref{eq:PDNCDpairing} in Betti cohomology. Now by definition of $c$ and $F_\infty$ we have
            $$c[\nu] = \sum_{[\gamma]}[\gamma]^\vee\int_\gamma\nu \quad \mbox{ and }\quad F_\infty c[\omega] = \sum_{[\delta]}[\delta]^\vee\int_{\overline{\delta}}\omega\ ,$$
            and the claim follows.
            \end{proof}
            
            The classes $[\gamma]$ can be represented by  chains on $X(\CC) \backslash B(\CC)$ with boundary contained in $A(\CC)$, and likewise $[\delta]$ with $A$, $B$ interchanged. The rational coefficients $\langle[\gamma]^\vee,[\delta]^\vee\rangle$ appearing in the formula are the entries of the inverse transpose matrix of the intersection matrix of representatives for the classes $[\gamma]$ and $[\delta]$.
		
		\subsection{Remarks on higher order poles}
		The proof of Proposition \ref{prop:PD simpler log forms conjugate} easily generalises. One can show in the same manner that 
		$$j_!\A^\bullet_U \stackrel{\sim}{\hookrightarrow} \A^\bullet_X(\log D)(-(p+1) D) \;\; \textnormal{ and } \;\; \overline{\A^\bullet_X(\log D)(q D)} \stackrel{\sim}{\hookrightarrow} j_*\A^\bullet_U $$
		are quasi-isomorphisms for $p, q\geq 0$ and that 
		$$\Gamma(X,\A^\bullet_X(\log D)((-p+1) D))\otimes \Gamma_\c(X,\overline{\A^\bullet_X(\log D)(q D)})  \rightarrow \CC[2n]$$
		$$\nu\otimes\overline{\omega}\mapsto \int_X\nu\wedge\overline{\omega}\ $$
		is a well-defined perfect pairing 
		whenever $p \geq q.$ The relevant local forms are obtained by multiplying  (\ref{eq:big wedge product polar}) with $z^{p}/\overline{z}^{q}$. Consequently, 
		
			\begin{proposition}\label{prop: higher order poles}
		Under the identifications
		$$H^r_{\mathrm{dR},\mathcal{C}^\infty}(X\min B \lmod A)\simeq H^r(\Gamma(X,\A^\bullet_X(\log A\cup B)(bB-(a+1)A)))$$
		and
		$$H^r_{\mathrm{dR},\mathcal{C}^\infty}(X\min A  \lmod B)\simeq H^r(\Gamma(X,\overline{\A^\bullet_X(\log A\cup B)(a A-(b+1)B)}))$$
		the pairing \eqref{eq:PDNCDpairing} is induced in (smooth) de Rham cohomology for any integers $a,b\geq 0$ by the integration pairing $\nu\otimes \overline{\omega} \mapsto  \int_X \nu\wedge\overline{\omega}$ on global sections. 
		\end{proposition}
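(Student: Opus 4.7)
The plan is to run the proofs of Propositions \ref{prop:5qis} and \ref{prop:PD log forms conjugate} verbatim at the level of sheaves and diagrams, with $\A^\bullet_X(\log A\cup B)(-A)$ (resp.\ $\A^\bullet_X(\log A\cup B)(-B)$) everywhere replaced by $\A^\bullet_X(\log A\cup B)(bB-(a+1)A)$ (resp.\ $\A^\bullet_X(\log A\cup B)(aA-(b+1)B)$); only the local-analytic steps need to be redone.

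The first local step is the higher-order Poincar\'{e} lemma: for $p,q\geq 0$ the natural inclusions
$$j_!\A^\bullet_U \stackrel{\sim}{\hookrightarrow} \A^\bullet_X(\log D)(-(p+1)D) \quad \text{and} \quad \A^\bullet_X(\log D)(qD) \stackrel{\sim}{\hookrightarrow} j_*\A^\bullet_U$$
are quasi-isomorphisms. This is a local statement on a polydisk $(X,D)=(\{|z_i|<1\},\{z_1\cdots z_s=0\})$ and follows from the same contracting homotopy
$$h(f\,dz_I\wedge d\overline{z}_J)=\left(\int_0^{z_1}f(u_1,z_2,\ldots,z_n)\,du_1\right)dz_{I\min\{1\}}\wedge d\overline{z}_J$$
as in the proof of Proposition \ref{prop:PD simpler log forms conjugate}, once one observes that $z_1$-antidifferentiation raises the order of vanishing along $\{z_1=0\}$ by one and lowers the order of a pole by one modulo a $\log z_1$ correction absorbed by the allowed logarithmic pole. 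Chaining these quasi-isomorphisms with the obvious inclusions as in Proposition \ref{prop:5qis} then shows that both complexes in the statement compute the respective relative analytic de Rham cohomology.

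The second local step is the higher-order analogue of Lemma \ref{lem:wedge product integrable}: for $\nu$ and $\omega$ in the two global section spaces, the wedge product $\nu\wedge\overline{\omega}$ is polar-smooth on $(X,A\cup B)$. In local coordinates where $A\cup B=\{z_1\cdots z_s=0\}$ this reduces to showing that the relevant monomials $(z^a/\overline{z}^{a+1})\times(\text{log form})$ along a component of $A$, and $(z^b/\overline{z}^{b+1})\times(\text{log form})$ along a component of $B$, are polar-smooth; passing to polar coordinates $z=\rho e^{i\theta}$ this is immediate from identities as in \eqref{eq:big wedge product polar}, provided $a,b\geq 0$. With polar-smoothness established, Stokes' theorem on the real-oriented blow-up $\widetilde X$ shows that $\nu\otimes\overline{\omega}\mapsto \int_X\nu\wedge\overline{\omega}$ is a morphism of complexes -- each boundary term on $\partial_i\widetilde X$ vanishes because the restriction to $\rho_i=0$ of the above monomials carries a factor of either $d\rho_i$ or a strictly positive power of $\rho_i$ -- and the diagram chase of Proposition \ref{prop:PD log forms conjugate} identifies the induced pairing on cohomology with the Verdier duality pairing \eqref{eq:PDNCDpairing}, using the already-established case $a=b=0$.

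The main obstacle is the bookkeeping at points of $A\cap B$, where a local section of one of the twisted sheaves simultaneously involves a zero of order $a$ along $A$ and a pole of order $b+1$ along $B$ (and the other sheaf does the reverse); but the symmetric hypothesis $a,b\geq 0$ makes the two sets of factors combine cleanly, and the one-divisor local analysis applies in each coordinate direction separately.
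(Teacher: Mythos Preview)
Your approach is exactly the paper's: the text preceding the proposition says only that the proof of Proposition~\ref{prop:PD simpler log forms conjugate} ``easily generalises'' and that ``the relevant local forms are obtained by multiplying \eqref{eq:big wedge product polar} with $z^p/\overline{z}^q$,'' so your plan to rerun Propositions~\ref{prop:5qis} and~\ref{prop:PD log forms conjugate} with twisted coefficients is on target. Your treatment of polar-smoothness and of the vanishing of the boundary terms on $\partial_i\widetilde X$ is correct.

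One step is not right as stated. The contracting homotopy $h$ (integration from $0$ in $z_1$) does handle the ``zero'' side $j_!\A^\bullet_U\hookrightarrow \A^\bullet_X(\log D)(-(p+1)D)$, but it does \emph{not} apply to the ``pole'' side $\A^\bullet_X(\log D)(qD)\hookrightarrow j_*\A^\bullet_U$: when $f$ has an honest pole at $z_1=0$ the integral $\int_0^{z_1} f\,du_1$ diverges, and the ``$\log z_1$ correction'' you invoke cannot be ``absorbed by the allowed logarithmic pole'' because $\log z_1$ is not a section of $\A^0_X(\log D)(qD)$ for any $q$ (the adjective \emph{logarithmic} refers to $dz_1/z_1$ being admissible, not to $\log z_1$ appearing as a coefficient). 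The repair is cheap: you already know $\A^\bullet_X(\log D)\hookrightarrow j_*\A^\bullet_U$ is a quasi-isomorphism, and this factors through $\A^\bullet_X(\log D)(qD)$; so it suffices to show that $\A^\bullet_X(\log D)\hookrightarrow \A^\bullet_X(\log D)(qD)$ is a quasi-isomorphism, which you can do by filtering by pole order and checking acyclicity of each graded piece (or, more directly, by noting that both inclusions induce isomorphisms on cohomology sheaves away from $D$ and then arguing locally on $D$ via the standard log Poincar\'e lemma). With this adjustment your sketch goes through.
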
    
		
		\begin{remark}\label{rem: higher order poles}
		These formulae rely on the fact that $\nu\wedge\overline{\omega}$ is polar-smooth. 
		It would be interesting to give formulae for the pairing \eqref{eq:PDNCDpairing} involving more general differential forms, e.g., $\omega$, $\nu$ with high order poles  along $A$, $B$ such that  $\nu\wedge\overline{\omega}$ is not polar-smooth.  The main problem is that in general the integral $\int_X\nu\wedge\overline{\omega}$ is not absolutely convergent.  In the setting of Theorem \ref{thm:sv recipe general} one expects  the formula
		\begin{equation}\label{eq:sv lim epsilon general}
		\lim_{\varepsilon\rightarrow 0}\,(2\pi i)^{-n}\int_{X_\varepsilon}\nu\wedge\overline{\omega}
		\end{equation}
	to compute the single-valued period map for a good family of open subsets $X_\varepsilon$ approximating $X(\CC)$. Typically, $X_\varepsilon$ should look like the complement in $X(\CC)$ of a union of tubular $\varepsilon$-neighbourhoods of the irreducible components of $A$ and $B$. The main issue is that \eqref{eq:sv lim epsilon general} is sensitive to the shape of those tubular neighbourhoods.
	In the case of curves, this idea can be made to work using the neighbourhoods $|w|=\varepsilon$ around each singular point, where $w$ is a local holomorphic coordinate, using  the fact that the angular integral $\int_{0}^{2 \pi} e^{i n \theta} d\theta $ vanishes whenever $n\neq 0$.
	More generally, if $A\cup B$ is a disjoint union of smooth divisors, the notion of a cut-off function introduced by Felder and Kazhdan in \cite{felderkazhdanRiemann} should allow one to define well-behaved tubular neighbourhoods.
    \end{remark}

\section{The de Rham projection for separated motives} \label{sect: dRProj}

     In this section we shall state our results in the case $k=\QQ$ in order to keep the notations simple. There is no extra difficulty in the general case.

    \subsection{Separated mixed Hodge structures}  \label{sect: Separated} See also \cite[4.3]{brownnotesmot}.

    	\begin{definition}
    	An object $H$ of $\mathcal{H}$ is \emph{effective} if $W_{-1}H=0$ and $H=F^0H$.
		\end{definition}
    		
    	The Tate object $\QQ(-n)$ is effective if and only if $n\geq 0$. More generally, all objects $H^r(X\min A,B\min A\cap B)$ constructed in Example \ref{ex: cohomXAB} are effective. Effectivity means that the Hodge numbers $h^{p,q}=\dim (F^p\overline{F^q}\mathrm{gr}^W_{p+q}H)$ vanish unless $p,q\geq 0$.

        \begin{definition}
        An object $H$ of $\mathcal{H}$ is \emph{separated} if it is effective and the composite 
        $$\mathrm{gr}_0^W H = W_0H_{\mathrm{dR}} \rightarrow H_{\mathrm{dR}} \rightarrow H_{\mathrm{dR}}/F^1H_{\mathrm{dR}} = \mathrm{gr}^0_F H\ ,$$
        which  is injective, is in fact an isomorphism. 
        \end{definition}

        Separatedness means that there is a direct sum decomposition
        $$H_{\mathrm{dR}} = W_0 H_{\mathrm{dR}} \oplus F^1 H_{\mathrm{dR}}\ .$$
    		Equivalently,  $h^{p,q}=0$ unless $(p,q)=(0,0)$ or $p,q>0$. Note that effective mixed Tate objects (for which $h^{p,q}=0$ for $p\neq q$) are automatically separated. For a smooth projective curve $X$ and $A,B\subset X$ disjoint finite sets of points, the object $H^1(X\min A \;\mathrm{mod}\; B)$ is separated if and only if $X$ has genus zero.\medskip
        
        For a separated object $H$ one can form the composite
        $$c_0:H_{\mathrm{dR}}\otimes_\QQ\CC \twoheadrightarrow (H_{\mathrm{dR}}/F^1H_{\mathrm{dR}})\otimes_\QQ\CC \simeq W_0H_{\mathrm{dR}}\otimes_\QQ\CC \stackrel{\simeq}{\rightarrow} W_0H_{\mathrm{B}}\otimes_\QQ\CC \hookrightarrow H_{\mathrm{B}}\otimes_\QQ\CC\ .$$
        where the map $W_0H_{\mathrm{dR}}\otimes_\QQ\CC \rightarrow W_0H_{\mathrm{B}}\otimes_\QQ\CC$ is the comparison isomorphism.
        We will mostly be concerned with its transpose
        \begin{equation}\label{eq:c0vee definition}
        c_0^\vee: H_\B^\vee\otimes_\QQ\CC \rightarrow H_\dR^\vee\otimes_\QQ\CC\ .
        \end{equation}
        We will see below (Lemma \ref{lem: coefficients c zero} and Remark \ref{rem: coefficients c zero}) that in the cases of interest, when $H_{B/\dR}$ come from the cohomology of algebraic varieties over $\overline{\QQ}$, the morphisms $c_0$ and $c_0^\vee$ are defined over a number field $K\subset \mathbb{C}$, i.e., $c_0^\vee:H_\B^\vee\otimes_\QQ K \longrightarrow H_\dR^\vee\otimes_\QQ K$. 
      In the case of effective mixed Tate motives,  which are  separated, there exists a  similar morphism using the canonical fiber functor which  is  always defined over $\QQ$ (Remark \ref{rem: s vs sv MT}). 
        \medskip
       	 
       	 Let $\mathcal{H}^+_{\mathrm{sep}}\subset \mathcal{H}$  denote the full subcategory of separated objects. Using the fact that the functors $\mathrm{gr}_F$, $\mathrm{gr}^W$ are exact, one sees that this category is an abelian tensor category closed under subquotients. It is, however, not closed under duals and therefore not Tannakian. Nonetheless,  the map $c_0$ defines a natural transformation between the restrictions of 
       	 $\omega_{\mathrm{dR}}\otimes_\QQ\CC$ and $\omega_{\mathrm{B}}\otimes_\QQ\CC$ to $\mathcal{H}^+_{\mathrm{sep}}$. Let us denote by $\Pe^{\mm,+}_{\mathcal{H},\mathrm{sep}}$ (resp. $\Pe^{\mm,\dR,+}_{\mathcal{H},\mathrm{sep}}$) the subalgebra of $\Pe^{\mm}_{\mathcal{H}}$ (resp. $\Pe^{\mm,\dR}_{\mathcal{H}}$) spanned by matrix coefficients of separated  objects.

        \begin{definition}\label{defi:dR projection}
        The morphism of algebras
        $$\pi^{\mm,\dR}:\Pe^{\mm,+}_{\mathcal{H},\mathrm{sep}}\otimes_\QQ\CC \longrightarrow \Pe^{\mm,\dR,+}_{\mathcal{H},\mathrm{sep}}\otimes_\QQ\CC \hookrightarrow \Pe^{\mm,\dR}_{\mathcal{H}}\otimes_\QQ\CC$$
        defined on matrix coefficients by
         \begin{equation}\label{eq:definition dR projection}
       	 \pi^{\mm,\dR}\,[H,\gamma,\omega]^\mm = [H,c_0^\vee(\gamma),\omega]^{\mm,\dR} \ .
       	 \end{equation}
       is called the \emph{de Rham projection} for separated objects in $\mathcal{H}$.
        \end{definition}
        
       The Lefschetz $\mathcal{H}$-period $\mathbb{L}^\mm$ from Example \ref{example cauchy} is in the kernel of the de Rham projection. The map $\pi^{\mm,\dR}$ is not strictly speaking a projection, but gets its name from the fact that it arises by projecting modulo $F^1$.
       
       \begin{remark}\label{rem: s vs sv MT}In the setting of mixed Tate motives (\S\ref{par:MT case}) all effective objects are automatically separated. Furthermore, for an effective mixed Tate motive $M$ in $\MT(k)$ the comparison $W_0M_{\can}\otimes_\QQ\CC\stackrel{\sim}{\rightarrow} W_0M_{\B,\sigma}\otimes_\QQ\CC$ is defined over $\QQ$ because $W_0M$ is a direct sum of copies of $\QQ(0)$. One can repeat the construction above to define a projection morphism
       $$\pi^{\mm,\can}:\Pe^{\mm,(\can,(\B,\sigma)),+}_{\MT(k)}\longrightarrow \Pe^{\mm,\can}_{\MT(k)}\ ,$$
       where $\Pe^{\mm,(\can,(\B,\sigma)),+}_{\MT(k)}$ is defined analogously to $\Pe^{\mm,+}_{\mathcal{H},\mathrm{sep}}$.
       In the setting of the category $\MT(\mathbb{Z})\subset\MT(\QQ)$ this map was originally defined in \cite{brownSVMZV}. We note that the single-valued period map $\s_\sigma$ and its variant $\sv_\sigma$ defined in \S\ref{par:MT case} are the same on the image of $\pi^{\mm,\can}$ because the image of $c_0^\vee$ is in weight zero.
       \end{remark}
       
        In order to compute the de Rham projection in practice we need to  interpret $c_0^\vee(\gamma)$ in equation \eqref{eq:definition dR projection}. This is explained below. The following well-known  observation allows us to focus on cohomology groups of the type $H^r(X,D)$ or $H^r(X\min D)$.
        
        \begin{proposition}\label{prop:grW 0 and 2n}
        Let $(X,A,B)$ satisfy $(\star)_\QQ$. For every integer $r$:
       	\begin{align*}
       	\mathrm{gr}\, H^r(X\min A \lmod B) \simeq \mathrm{gr} \, H^r(X,B) \qquad \hbox{for }  \quad  \mathrm{gr} \  = \ \mathrm{gr}_0^W \ , \  \mathrm{gr}^0_F \  ; \\
       	\mathrm{gr}\,  H^r(X\min A \lmod B) \simeq \mathrm{gr} \,  H^r(X\min A)
       	\qquad \hbox{for }  \quad  \mathrm{gr}  \ =  \ \mathrm{gr}_{2n}^W \ ,  \ \mathrm{gr}^n_F \ . 
       	\end{align*}

        \end{proposition}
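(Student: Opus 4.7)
The plan is to deduce the four isomorphisms from two standard long exact sequences of mixed Hodge structures, exploiting the exactness of the functors $\mathrm{gr}_k^W$ and $\mathrm{gr}^p_F$. The background input I will use is the classical bound, coming from Deligne's construction via smooth proper simplicial resolutions, that for any complex algebraic variety $Y$ of dimension $d$ and any closed subvariety $Z\subset Y$, the mixed Hodge structure on $H^r(Y\lmod Z)$ satisfies $\mathrm{gr}^W_k=0$ for $k>2d$ and $\mathrm{gr}_F^p=0$ for $p>d$. Beyond that, everything is purely formal bookkeeping with weights and Tate twists.

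For the top extremes $\mathrm{gr}_{2n}^W$ and $\mathrm{gr}^n_F$, I would use the long exact sequence of the pair $(X\min A,\,B\min(A\cap B))$ in $\mathcal{H}$:
$$\cdots \To H^{r-1}(B\min(A\cap B)) \To H^r(X\min A \lmod B) \To H^r(X\min A) \To H^r(B\min(A\cap B)) \To \cdots$$
Since $B\min(A\cap B)$ has complex dimension $n-1$, the background bound forces $\mathrm{gr}_{2n}^W$ and $\mathrm{gr}^n_F$ of its cohomology in every degree to vanish, and the desired isomorphisms with the corresponding graded pieces of $H^r(X\min A)$ drop out by exactness.

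For the bottom extremes $\mathrm{gr}_0^W$ and $\mathrm{gr}_F^0$, I would use the Gysin-type long exact sequence attached to the divisor $A\subset X$, relative to $B$:
$$\cdots \To H^{r-2}(A \lmod A\cap B)(-1) \To H^r(X \lmod B) \To H^r(X\min A \lmod B) \To H^{r-1}(A \lmod A\cap B)(-1) \To \cdots$$
This comes from the six-functor formalism for mixed Hodge modules applied to the distinguished triangle $i_*i^! \to \mathrm{id} \to Rj_*j^*$ for the closed immersion $i:A\hookrightarrow X$ and the complementary open immersion $j:X\min A \hookrightarrow X$, combined with the purity isomorphism $i^!\QQ_X \simeq \QQ_A[-2](-1)$ on the smooth locus of $A$ (the normal crossings are handled by a commuting-pushforward argument of the same flavour as Proposition \ref{prop: commutejj}). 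The mixed Hodge structures $H^k(A\lmod A\cap B)$ are effective -- their weights are $\geq 0$ and their Hodge filtration is concentrated in $F^{\geq 0}$ -- so after tensoring with $\QQ(-1)$ the weights become $\geq 2$ and $F^0$ remains the whole space. Consequently $\mathrm{gr}_0^W$ and $\mathrm{gr}_F^0$ of the Tate-twisted terms both vanish, and applying these exact functors to the sequence yields the isomorphism with the corresponding graded pieces of $H^r(X \lmod B) = H^r(X,B)$.

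The only real obstacle is setting up the Gysin sequence cleanly as an exact sequence of mixed Hodge structures with the correct Tate twists when $A$ is merely a normal crossing divisor rather than smooth; this is precisely what the mixed Hodge module machinery buys us. Once that formalism is invoked, the proof is a short exercise in the elementary dimension-and-twist estimates above.
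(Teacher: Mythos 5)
Your treatment of the second pair of isomorphisms ($\mathrm{gr}^W_{2n}$ and $\mathrm{gr}^n_F$) is exactly the paper's argument: the long exact sequence of the pair $(X\min A, B\min A\cap B)$, exactness of the graded functors, and the dimension bound on the cohomology of the $(n-1)$-dimensional variety $B\min A\cap B$. For the first pair ($\mathrm{gr}^W_0$ and $\mathrm{gr}^0_F$) you take a genuinely different route: the paper simply deduces it from the second pair by applying the duality of Theorem \ref{thm: PDNCD} (which exchanges $A$ with $B$, $W_0$ with $W_{2n}$, and $F^0$ with $F^n$ up to the twist by $\QQ(-n)$), whereas you argue directly with a Gysin sequence for $A\subset X$ relative to $B$ and kill the local-cohomology terms using effectivity plus the Tate twist. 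Both work. The paper's route is essentially free once Theorem \ref{thm: PDNCD} is in place; yours avoids duality altogether but pays for it by having to make sense of $i_*i^!$ when $A$ is a (possibly non-simple) normal crossing divisor, where the single term $H^{r-2}(A\lmod A\cap B)(-1)$ is not literally correct --- one should either iterate over the components or invoke the general fact that hypercohomology of $i_*i^!\mathcal{G}$ for a codimension-one closed subset of a smooth variety has weights $\geq 2$ and lies in $F^{\geq 1}$. You flag this, and the fix is routine. Two small precision points: what you actually need from effectivity is that $F^1$ of the twisted term is the \emph{whole} space (so that $\mathrm{gr}^0_F = F^0/F^1$ vanishes), not merely that $F^0$ is; saying ``$F^0$ remains the whole space'' after twisting obscures this, even though it follows since $F^1(V(-1))=F^0(V)=V$. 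And your appeal to weights $\geq 2$ after the twist likewise needs weights $\geq 0$ before it, which is the effectivity you already cite.
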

        \begin{proof}
        The first statement follows from the last and Theorem \ref{thm: PDNCD}. To prove the last statement consider the long exact sequence in cohomology:
        $$\cdots \rightarrow H^{r-1}(B\min A\cap B) \rightarrow H^r(X\min A \lmod B) \rightarrow H^r(X\min A) \rightarrow H^r(B\min A\cap B) \rightarrow \cdots$$
        It is compatible with mixed Hodge structures. Since the functors $\mathrm{gr}_{2n}^W$ and $\mathrm{gr}^n_F$ are exact, we need to prove that $\mathrm{gr}_{2n}^WH^r(B\min A\cap B)$ and $\mathrm{gr}^n_FH^r(B\min A\cap B)$ vanish for every $r$. Since  $B\min A\cap B$ has dimension $n-1$, the weight  (resp. Hodge) filtration on its cohomology only goes  up to degree $2(n-1)$ (resp. $n-1$).
        \end{proof}
   
   		\begin{proposition}\label{prop:relative separated}
   		Let $(X,A,B)$ satisfy $(\star)_\QQ$. For every integer $r$, the object $H^r(X\min A \;\mathrm{mod}\; B)$ is separated if and only if $H^r(X,B)$ is.
   		\end{proposition}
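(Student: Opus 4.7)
My plan is to reduce the separatedness of each object to a simple equality of two dimensions, and then invoke Proposition \ref{prop:grW 0 and 2n} directly.

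First I would check that both $H = H^r(X \setminus A \lmod B)$ and $H' = H^r(X,B)$ are effective objects of $\mathcal{H}$, so that the definition of separatedness applies. Effectivity of $H$ is part of the content of Example \ref{ex: cohomXAB}. Effectivity of $H'$ follows from the fact that $X$ is smooth projective and $B$ is a normal crossing divisor: Deligne's theory \cite{delignehodge2,delignehodge3} then gives a mixed Hodge structure on $H^r(X,B)$ whose weights lie in $[0,r]$ and whose Hodge filtration satisfies $F^0 H'_\dR = H'_\dR$, so $W_{-1}H'=0$ and $F^0 H'_\dR = H'_\dR$.

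Next I would observe that for any effective object $K$ of $\mathcal{H}$, the composite
$$W_0 K_\dR \;\hookrightarrow\; K_\dR \;\twoheadrightarrow\; K_\dR/F^1K_\dR \;=\; \mathrm{gr}^0_F K$$
is always injective (as stated in the definition of separated). Consequently $K$ is separated if and only if the numerical condition $\dim W_0 K_\dR = \dim \mathrm{gr}^0_F K$ holds, i.e.\ the two sides have matching dimensions.

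Finally, I apply Proposition \ref{prop:grW 0 and 2n} to $H$ and $H'$. It provides isomorphisms $\mathrm{gr}_0^W H \simeq \mathrm{gr}_0^W H'$ and $\mathrm{gr}^0_F H \simeq \mathrm{gr}^0_F H'$. Since $H$ and $H'$ are effective we have $W_0 = \mathrm{gr}_0^W$ on each, and therefore
$$\dim W_0 H_\dR = \dim W_0 H'_\dR \qquad\text{and}\qquad \dim \mathrm{gr}^0_F H = \dim \mathrm{gr}^0_F H'\ .$$
Combining with the characterization of separatedness above, $H$ satisfies the dimension equality iff $H'$ does, which is exactly the desired equivalence. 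The proof is essentially a corollary of Proposition \ref{prop:grW 0 and 2n}; there is no substantive obstacle, the only mild point being the basic observation that separatedness for an effective MHS is captured by the dimensions of $W_0$ and $\mathrm{gr}^0_F$ alone.
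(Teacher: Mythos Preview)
Your proof is correct and follows essentially the same approach as the paper, which simply says the result ``is a consequence of the first isomorphism of Proposition \ref{prop:grW 0 and 2n}.'' You have merely spelled out the details: effectivity of both objects (stated in the paper just after the definition of effective), the reduction of separatedness to a dimension equality via the injectivity of $W_0\to\mathrm{gr}^0_F$, and then the identification of those dimensions via Proposition \ref{prop:grW 0 and 2n}.
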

   		
   		\begin{proof}
   		This is a consequence of the first
   		isomorphism 
   		of Proposition \ref{prop:grW 0 and 2n}.
   		\end{proof}

    \subsection{The weight \texorpdfstring{$0$}{0} part of \texorpdfstring{$H^n(X,D)$}{Hn(X,D)}}
    
        Let $X$ be a smooth projective complex variety of dimension $n$ and let $D$ be a \emph{simple} normal crossing divisor, i.e. whose irreducible components are smooth (see Remark \ref{rem:simple ncd} below for the general case). We fix a linear order on the set of (smooth) irreducible components of $D=D_1\cup\cdots\cup D_r$. For a  subset $I\subset\{1,\ldots,r\}$, write
    	 $D_I=\bigcap_{i\in I}D_i$,  with the convention that $D_\varnothing=X$. It is a 
     smooth closed subvariety of $X$  of codimension $|I|$. For later use, we recall a  well-known spectral sequence which computes
    	 $H^\bullet(X,D)$. 
       		
    		\begin{proposition}\label{prop:ss relative}
    		\begin{enumerate}[1.]
    		\item There is a spectral sequence
    		\begin{equation}\label{eq:ss relative}
    		E_1^{p,q} = \bigoplus_{|I|=p} H^q(D_I) \;\; \Rightarrow \;\; H^{p+q}(X,D)
    		\end{equation}
    		in the category of mixed Hodge structures. It degenerates at $E_2$ and the differential $d_1$ is the alternating sum of the natural morphisms
    		$$H^q(D_J)\rightarrow H^q(D_I) \qquad 
    		\hbox {for  } J\subset I, |I\min J|=1 \ . $$
    		\item There is an exact sequence
    		\begin{equation}\label{eq:es relative}
    		0\rightarrow \mathrm{gr}_0^WH_n(X,D) \rightarrow \bigoplus_{|I|=n}H_0(D_I) \longrightarrow \bigoplus_{|J|=n-1} H_0(D_J) \ .
    		\end{equation}
    		\end{enumerate}
    		\end{proposition}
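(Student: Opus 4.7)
My plan is to derive (1) from cohomological descent applied to the strata of the simple normal crossing divisor $D$, and then deduce (2) from (1) by duality together with the weight computation at the top of the resulting spectral sequence.

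For (1), I would start by constructing an explicit resolution of sheaves. Writing $\epsilon_I:D_I\hookrightarrow X$ for the inclusion of a stratum, with the convention $D_\emptyset=X$, the fact that $D$ is a \emph{simple} NCD (all strata are smooth) means that the complex
$$\mathcal{K}^\bullet\ :\ \QQ_X \To \bigoplus_{|I|=1}(\epsilon_I)_*\QQ_{D_I} \To \bigoplus_{|I|=2}(\epsilon_I)_*\QQ_{D_I} \To\cdots$$
with alternating-sum restrictions as differentials is a resolution of $j_!\QQ_{X\min D}$ for $j:X\min D\hookrightarrow X$. Its hypercohomology therefore computes $H^\bullet(X,D)$, and filtering by the degree in $\mathcal{K}^\bullet$ gives the first-quadrant spectral sequence $E_1^{p,q}=\bigoplus_{|I|=p}H^q(D_I)\Rightarrow H^{p+q}(X,D)$, with $d_1$ given by construction as the alternating sum of pullbacks along the inclusions $D_I\hookrightarrow D_J$ for $J\subset I$, $|I\min J|=1$. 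The Hodge-theoretic upgrade is then provided by the simplicial scheme $\tilde D_\bullet$ with $\tilde D_{p-1}=\bigsqcup_{|I|=p}D_I$, which is a smooth proper hypercovering of $D$; by Deligne's theory of simplicial mixed Hodge structures \cite{delignehodge3} the spectral sequence lives in the category of MHS.

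Degeneration at $E_2$ follows from weights. Each $D_I$ is smooth projective, so $H^q(D_I)$ is pure of weight $q$; hence $E_1^{p,q}$, and by exactness of $\mathrm{gr}^W$ also each $E_r^{p,q}$, is pure of weight $q$. But $d_r$ is a morphism of MHS from something of weight $q$ to something of weight $q-r+1$, and a morphism between pure MHS of different weights vanishes, so $d_r=0$ for $r\geq 2$.

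For (2), Theorem \ref{thm: PDNCD} applied with $A=D$, $B=\emptyset$ identifies $H^n(X,D)^\vee$ with a Tate twist of $H^n(X\min D)$; interpreting $H_n(X,D)$ as the dual mixed Hodge structure yields $\mathrm{gr}_0^WH_n(X,D)=(\mathrm{gr}_0^WH^n(X,D))^\vee$. By part (1) and the weight computation above, $\mathrm{gr}_0^WH^n(X,D)$ equals $E_\infty^{n,0}=E_2^{n,0}$, and since $D_I=\emptyset$ for $|I|>n$ by dimension, $E_2^{n,0}$ is simply the cokernel of
$$d_1\ :\ \bigoplus_{|J|=n-1}H^0(D_J)\To\bigoplus_{|I|=n}H^0(D_I)\ .$$
Dualizing this cokernel gives the asserted kernel, i.e.\ the exact sequence \eqref{eq:es relative}.

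The principal obstacle is the Hodge-theoretic step: one must not only produce the spectral sequence at the level of $\QQ$-vector spaces (which is elementary) but lift it to the category of MHS in a way compatible with the simplicial structure of the strata. Once Deligne's simplicial MHS formalism and the strictness of morphisms of MHS with respect to $W$ are invoked, the rest of the argument, including both the $E_2$-degeneration and the deduction of (2) from (1), is essentially formal.
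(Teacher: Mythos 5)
Your argument is correct and follows essentially the same route as the paper: the same Mayer--Vietoris-type resolution of $j_!\QQ_{X\setminus D}$ by the sheaves $(\epsilon_I)_*\QQ_{D_I}$, the same stupid-filtration hypercohomology spectral sequence, the same purity-of-weights argument for degeneration at $E_2$, and the same dualization of $E_2^{n,0}$ for part (2). The only divergences are minor: where you invoke Deligne's simplicial mixed Hodge theory to lift the spectral sequence to MHS, the paper uses Saito's mixed Hodge modules (flagging the $t$-structure subtleties with a reference), and your appeal to Theorem \ref{thm: PDNCD} in part (2) is unnecessary, since $H_n(X,D)$ is by definition the dual mixed Hodge structure of $H^n(X,D)$, which already gives $\mathrm{gr}_0^WH_n(X,D)\simeq(\mathrm{gr}_0^WH^n(X,D))^\vee$.
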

    		
    		\begin{proof}
    		\begin{enumerate}[1.]
    		\item We let $j:X\min D\hookrightarrow X$ denote the open immersion, and for every subset $I\subset\{1,\ldots,r\}$, let $i_I:D_I\hookrightarrow X$ denote the corresponding closed immersion. For $\mathcal{F}$ a sheaf of $\QQ$-vector spaces on $X$, we set
    		$$C^r(\mathcal{F}) = \bigoplus_{|I|=r}(i_I)_*(i_I)^*\mathcal{F}\ .$$
    		Let $d:C^r(\mathcal{F})\rightarrow C^{r-1}(\mathcal{F})$ denote the alternating sum of the natural morphisms
    		$$(i_J)_*(i_J)^*\mathcal{F}\rightarrow (i_I)_*(i_I)^*\mathcal{F}\ ,$$
    		for $J\subset I$. This forms a complex of sheaves $C^\bullet(\mathcal{F})$ on $X$. The natural morphism $j_!j^*\mathcal{F}\rightarrow \mathcal{F}=C^0(\mathcal{F})$ induces a map of complexes
    		$$j_!j^*\mathcal{F}\rightarrow C^\bullet(\mathcal{F})$$
    		which is easily seen to be a quasi-isomorphism, i.e., a resolution of $j_!j^*\mathcal{F}$. In particular, the complex $C^\bullet(\QQ_X)$ computes the cohomology $H^\bullet(X,D)$. One then filters it by the stupid filtration and applies the spectral sequence in hyper-cohomology. The compatibility with mixed Hodge structures uses Saito's category of mixed Hodge modules. Care has to be taken regarding $t$-structures, as is well explained  in \cite[\S 3.2]{petersen}. Now since each $E_1^{p,q}$ has a pure Hodge structure of weight $q$, all differentials $d_r$, for $r\geq 2$, vanish, which proves the first claim. 
    	
    		\item In particular, we have an isomorphism $\mathrm{gr}_0^WH^{n}(X,D)\simeq E_2^{n,0}$. By the description of the $E_1$ page, this is the cokernel of the map 
    		$$ \bigoplus_{|J|=n-1}H^0(D_J) \longrightarrow \bigoplus_{|I|=n} H^0(D_I)\ .$$ 
    		Taking the dual, via $\mathrm{gr}_0^WH_n(X,D)\simeq \left(\mathrm{gr}_0^WH^n(X,D)\right)^\vee$, implies the claim.
    		\end{enumerate}
    		\end{proof}
    		
    		If $X$ is a smooth projective complex curve and $D\subset X$ a finite set of points then \eqref{eq:es relative} follows by applying $\mathrm{gr}^W_0$ to the long exact sequence in relative homology
    		$$0\rightarrow H_1(X)\rightarrow H_1(X,D)\stackrel{\partial}{\longrightarrow} H_0(D) \rightarrow H_0(X)\rightarrow H_0(X,D)\rightarrow 0\ ,$$
    		where $\partial$ is the boundary morphism. In general, the relation between \eqref{eq:es relative} and boundary morphisms is as follows. For $D_i$ an irreducible component of $D$, let  $D'_i$ denote the union of the $D_j$'s for $j\neq i$. Then $D(i):=D_i\cap D'_i$ is a normal crossing divisor inside $D_i$. The long exact sequence in relative homology for the triple $(D'_i,D,X)$ is
		\begin{equation}\label{eq:les relative homology}
		\cdots\rightarrow H_n(X,D'_i)\rightarrow H_n(X,D) \rightarrow H_{n-1}(D,D'_i)\rightarrow H_{n-1}(X,D'_i)\rightarrow \cdots
		\end{equation}
    		 Excision gives an isomorphism $H_{n-1}(D,D'_i)\simeq H_{n-1}(D_i,D(i))$. We thus get a map
    		$$\partial_{D_i} : H_n(X,D)\rightarrow H_{n-1}(D_i,D(i))$$
    		that we call the \emph{partial boundary morphism} with respect to $D_i$. Concretely, in Betti homology, it sends a cycle on $X$ with boundary along $D$ to `the part of its boundary that is supported on $D_i$'. These maps can clearly be composed.

		\begin{proposition}\label{prop:partial boundary}
		The map 
		$$H_n(X,D)\twoheadrightarrow\mathrm{gr}_0^WH_n(X,D) \hookrightarrow \bigoplus_{|I|=n}H_0(D_I)$$ induced by \eqref{eq:es relative} is the composition of the partial boundary morphisms $\partial_{D_i}$ for $i\in I$.
    		\end{proposition}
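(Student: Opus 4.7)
Both sides of the claimed equality are $\QQ$-linear, so the plan is to dualise to a cohomological statement where the spectral sequence of Proposition \ref{prop:ss relative} is directly visible. Using the identification $\mathrm{gr}_0^WH_n(X,D) \simeq (\mathrm{gr}_0^WH^n(X,D))^\vee$ already exploited in the proof of that proposition, the map in the statement is dual to the edge morphism
$$\bigoplus_{|I|=n}H^0(D_I) \twoheadrightarrow \mathrm{gr}_0^WH^n(X,D) \hookrightarrow H^n(X,D)$$
coming from \eqref{eq:ss relative}. On the other hand, each partial boundary $\partial_{D_i}$ is, by its very definition via the triple $(D'_i, D, X)$ and excision, dual to a Gysin-type connecting morphism $H^{q-1}(D_i, D(i)) \to H^q(X, D)$. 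Hence it will suffice to show that, for each $I = \{i_1 < \cdots < i_n\}$, the projection of the edge map onto the $H^0(D_I)$-factor equals the $n$-fold composition of such connecting morphisms.

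I would prove this by induction on $n$, filtering the divisor as $D^{(m)} := D_{i_1} \cup \cdots \cup D_{i_m}$. At each inductive step, the long exact sequence of the triple $(D^{(m-1)}, D^{(m)}, X)$, combined with the excision isomorphism $H^\bullet(D^{(m)}, D^{(m-1)}) \simeq H^\bullet(D_{i_m}, D_{i_m} \cap D^{(m-1)})$, supplies one connecting morphism; iterating $n$ times produces a map $H^0(D_I) \to H^n(X, D^{(n)})$, which one then pushes forward along $H^n(X, D^{(n)}) \to H^n(X, D)$. The base case $n=1$ is the standard identification of the connecting morphism of the pair $(X, D_i)$ with the $E_2^{1,0}$ edge map of \eqref{eq:ss relative}.

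The main step, and the main technical obstacle, is to compare this iterated construction with the spectral sequence edge map itself. The strategy is to exploit the fact that the resolution $C^\bullet(\QQ_X)$ used in Proposition \ref{prop:ss relative} is compatible with the filtration of $D$ by the subdivisors $D^{(m)}$: truncating the \v{C}ech-style complex by the number of components involved yields a refinement whose associated spectral sequences are precisely those attached to the LES of the successive triples $(D^{(m-1)}, D^{(m)}, X)$. The successive connecting morphisms of these triples are then identified with the differentials of the refined spectral sequence, and on the top-weight piece $E_2^{n,0}$ they assemble into the iterated Gysin map described above. The delicate point is the matching of signs: the alternating sum defining $d_1$ in \eqref{eq:ss relative} must agree with the product of signs arising from the $n$ successive connecting homomorphisms, which requires a single orientation convention (the one fixed by the chosen linear order on the components of $D$) to be propagated consistently through every step. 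Once that bookkeeping is settled, the induction closes and the proposition follows.
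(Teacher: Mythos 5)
Your proposal is essentially the paper's argument: dualise to cohomology and exploit the compatibility of the resolution $C^\bullet(\QQ_X)$ from Proposition \ref{prop:ss relative} with the transposed partial boundary morphisms, which the paper phrases as the face-inclusion morphisms of complexes $C^{\bullet-1}(D_i,D(i))\to C^\bullet(X,D)$ inducing maps of $E_2$-pages that compose to the edge map dual to \eqref{eq:es relative}. One small caveat on your inductive bookkeeping: the composite $\partial_{D_{i_n}}\cdots\partial_{D_{i_1}}$ factors through the successively deeper intersections $D_{i_1}\cap\cdots\cap D_{i_m}$ (each step being a partial boundary for the pair $(D_{i_1}\cap\cdots\cap D_{i_{m-1}},\,D(i_1,\ldots,i_{m-1}))$), not through the subdivisors $D^{(m)}\subset X$, so the triples $(D^{(m-1)},D^{(m)},X)$ do not compose directly and the induction is more naturally organised by descending to the pairs $(D_{i_1},D(i_1))$, $(D_{i_1}\cap D_{i_2},\ldots)$, exactly as the face-inclusion picture handles automatically.
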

    		
    		\begin{proof}
            This results from the fact that the spectral sequence \eqref{eq:ss relative} is compatible with the partial boundary morphisms. More precisely, let  $C^\bullet(X,D)$ denote the complex $C^\bullet(\QQ_X)$ from the proof of Proposition \ref{prop:ss relative} and let  $E_r^{p,q}(X,D)$ be the corresponding spectral sequence. One easily checks that the transpose of $\partial_{D_i}$ is induced by the natural morphism $C^{\bullet-1}(D_i,D(i))\rightarrow C^\bullet(X,D)$ corresponding to inclusion of the face $D_i$. This is reflected on the weight-graded quotients by a map $E_2^{p-1,q}(D_i,D(i))\rightarrow E_2^{p,q}(X,D)$. By composing these maps for $i\in I$ we get a map $E_2^{0,0}(D_I)\rightarrow E_2^{n,0}(X,D)$ which is nothing but the map $H^0(D_I)\rightarrow \mathrm{gr}_0^WH^n(X,D)$ dual to \eqref{eq:es relative}. The claim follows from taking linear duals.
    		\end{proof}

    		Our analysis of the weight zero part of the relative cohomology allows us to be more  precise about the coefficients involved in the morphism $c_0^\vee$ and hence the de Rham projection $\pi^{\mm,\dR}$.
    		
    		\begin{lemma}\label{lem: coefficients c zero}
    		Let $(X,A,B)$ satisfy $(\star)_\QQ$ and such that $H=H^n(X\min A \,\lmod B)$ is separated. There exists a number field $K$ such that for every $I$ with $|I|=n$, $D_I\times_{\mathbb{Q}}K$ is a disjoint union of copies of $\mathrm{Spec}(K)$. Then for every embedding $K\subset \mathbb{C}$ the morphism $c_0^\vee$ is defined over $K$:
    		$$c_0^\vee: H_n^\B(X\min A\lmod B)\otimes_\QQ K \longrightarrow H_n^\dR(X\min A\lmod B)\otimes_\QQ K\ .$$
    		For classes $\gamma\in H_\B^\vee$ and $\omega\in H_\dR$, we therefore have
    		$$\pi^{\mm,\dR}\,[H,\gamma,\omega]^\mm = [H,c_0^\vee(\gamma),\omega]^{\mm,\dR} \;\in\, \Pe^{\mm,\dR}_{\mathcal{H}}\otimes_\QQ K\ .$$
    		\end{lemma}
    		
    		\begin{proof}
    		For every $I$ with $|I|=n$, the zero-dimensional smooth variety $D_I$ is a disjoint union of $\mathrm{Spec}(F)$ for $F$ a number field. There are a finite number of number fields appearing in this way and one can choose a number field $K$ that contains all of them and such that the extension $K/\mathbb{Q}$ is Galois. Then for every $I$ with $|I|=n$, the comparison
    		$$H^0_\dR(D_I)\otimes_\QQ\CC \stackrel{\sim}{\longrightarrow} H^0_\B(D_I)\otimes_\QQ\CC$$
    		is defined over $K$. The result then follows from Proposition \ref{prop:grW 0 and 2n} and Proposition \ref{prop:ss relative}.
    		\end{proof}
    		
    		\begin{remark}\label{rem: coefficients c zero}
            The lemma implies that if we restrict our attention to relevant subcategories of $\mathcal{H}$, the de Rham projection will be defined over some fixed number field $K$. In certain contexts of interest one can even take $K=\QQ$ and thus $c_0^\vee$ is defined over $\QQ$. This is the case for the cohomology groups associated to the moduli spaces $\overline{\mathcal{M}}_{0,n}$ which are  studied in the sequel to this paper \cite{BD2} (see also \S\ref{par: SVMZV}). 
            \end{remark}

    \subsection{The highest weight  part of \texorpdfstring{$H^n(X\min D)$}{H(X min D)}}\label{par:highest weight}
    
    	We continue working with a smooth projective complex variety $X$ of dimension $n$ and a simple normal crossing divisor $D$ in $X$. We consider the Verdier dual of the last subsection and analyse it in terms of differential forms. The following spectral sequence is due to Deligne \cite{delignehodge2}.
        		
    		\begin{proposition}\label{prop:ss complement}
    		\begin{enumerate}[1.]
    		\item We have a spectral sequence
    		\begin{equation}\label{eq:ss complement}
    		E_1^{-p,q} = \bigoplus_{|I|=p} H^{q-2p}(D_I)(-p) \;\; \Rightarrow \;\; H^{-p+q}(X\min D)
    		\end{equation}
    		in the category of mixed Hodge structures. It degenerates at $E_2$ and the differential $d_1$ is the alternating sum of the Gysin morphisms
    		$$H^{q-2p}(D_I)(-p)\rightarrow H^{q-2p+2}(D_J)(-p+1) \quad \hbox{for }  J\subset I \  , \  |I\min J|=1 \ . $$
    		\item We have an exact sequence
    		\begin{equation}\label{eq:es complement}
    		0\rightarrow \mathrm{gr}_{2n}^WH^n(X\min D) \rightarrow \bigoplus_{|I|=n}H^0(D_I)(-n) \longrightarrow \bigoplus_{|J|=n-1} H^2(D_J)(-n+1) \ .
    		\end{equation}
    		\end{enumerate}
    		\end{proposition}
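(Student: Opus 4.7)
The plan is to mirror the argument of Proposition \ref{prop:ss relative} on the Verdier dual side. The key ingredient is Deligne's weight filtration $W_\bullet$ on the logarithmic de Rham complex $\Omega^\bullet_X(\log D)$ \cite{delignehodge2}, whose graded pieces are computed by the Poincar\'e residue isomorphism
\[
\mathrm{gr}^W_p\,\Omega^\bullet_X(\log D)\ \simeq\ \bigoplus_{|I|=p}(i_I)_*\,\Omega^{\bullet-p}_{D_I}(-p)\,.
\]
First I would take the hypercohomology spectral sequence of this filtered complex. Using the residue isomorphism to identify the graded pieces, a direct calculation gives
\[
E_1^{-p,q}\ =\ \mathbb{H}^{-p+q}\!\left(X,\,\mathrm{gr}^W_p\,\Omega^\bullet_X(\log D)\right)\ \simeq\ \bigoplus_{|I|=p}H^{q-2p}(D_I)(-p),
\]
converging to $H^{-p+q}(X\min D)$, as claimed. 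Identifying $d_1$ with the alternating sum of Gysin morphisms for the inclusions $D_I\hookrightarrow D_J$ with $|I\min J|=1$ then comes from unwinding the connecting morphism of the distinguished triangle $\mathrm{gr}^W_{p+1}[-1]\To W_p/W_{p-1}\To \mathrm{gr}^W_p$ through the residue isomorphism, and is standard.

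The Hodge-theoretic upgrade, needed to make the Tate twists $(-p)$ canonical and to obtain a spectral sequence of mixed Hodge structures, proceeds exactly as for Proposition \ref{prop:ss relative}: lift $(\Omega^\bullet_X(\log D),W_\bullet)$ to a filtered object of Saito's category $\mathrm{MHM}(X)$, keeping careful track of $t$-structures as in \cite[\S 3.2]{petersen}. Granted this, $E_2$-degeneration is then a formal weight argument: each $E_1^{-p,q}$ is pure of weight $q$ as a Tate twist of cohomology of a smooth projective variety, whereas $d_r:E_r^{-p,q}\To E_r^{-p+r,q-r+1}$ for $r\geq 2$ would connect pure structures of distinct weights $q$ and $q-r+1$, and so must vanish. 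This proves part 1.

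Part 2 is then a direct reading of the $(-n,2n)$-entry of the degenerated spectral sequence: one has
\[
\mathrm{gr}^W_{2n}H^n(X\min D)\ \simeq\ E_2^{-n,2n}\ =\ \ker\!\left(d_1: E_1^{-n,2n}\To E_1^{-n+1,2n}\right),
\]
and substituting the explicit descriptions of $E_1$ and $d_1$ yields exactly the exact sequence \eqref{eq:es complement}.

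The main obstacle, exactly as in Proposition \ref{prop:ss relative}, is the Hodge-theoretic lift to $\mathrm{MHM}(X)$ which legitimises the Tate twists and the weight argument for $E_2$-degeneration. An essentially equivalent route more in the Verdier-duality spirit of this section would be to dualise the Koszul-type resolution $j_!\QQ_{X\min D}\simeq \mathcal{C}^\bullet(\QQ_X)$ of \emph{loc. cit.}: using $\mathbb{D}_X(j_!\QQ_{X\min D})\simeq Rj_*\QQ_{X\min D}[2n](n)$ together with $\mathbb{D}_X((i_I)_*\QQ_{D_I})\simeq (i_I)_*\QQ_{D_I}[2(n-|I|)](n-|I|)$ produces a resolution of $Rj_*\QQ_{X\min D}$ whose $p$-th term is $\bigoplus_{|I|=p}(i_I)_*\QQ_{D_I}[-2p](-p)$, and whose stupid filtration gives the same spectral sequence.
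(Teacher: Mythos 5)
Your proof is correct, but your primary route is genuinely different from the paper's. The paper disposes of this proposition in one line: it applies Verdier duality to Proposition \ref{prop:ss relative}, so that the spectral sequence, its $E_2$-degeneration, and the identification of $d_1$ with Gysin maps (dual to restriction maps) are all inherited for free from the already-established relative spectral sequence \eqref{eq:ss relative}; part 2 then follows by dualising \eqref{eq:es relative}. You instead rebuild Deligne's spectral sequence from scratch via the weight filtration on $\Omega^\bullet_X(\log D)$ and the Poincar\'e residue isomorphism on graded pieces. That route is standard and works, but note that $\Omega^\bullet_X(\log D)$ alone only produces the complex (de Rham) incarnation; to get a spectral sequence of $\QQ$-mixed Hodge structures with canonical Tate twists you still need the rational lattice, supplied either by the canonical truncation filtration on $Rj_*\QQ_{X\min D}$ or --- as in the alternative you sketch at the end --- by Verdier-dualising the resolution $C^\bullet(\QQ_X)$ of $j_!\QQ_{X\min D}$. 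That second sketch is in substance exactly the paper's proof, and it is the more economical one here since all the Hodge-theoretic care (the MHM lift, the $t$-structure bookkeeping) was already expended in proving Proposition \ref{prop:ss relative}; your de Rham construction buys instead a concrete description of the $E_1$-terms by logarithmic forms, which is in the spirit of Proposition \ref{prop:global log Hodge} but not needed for the statement itself.
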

    		
    		\begin{proof}
    		This follows from applying Verdier duality to Proposition \ref{prop:ss relative}.
    		\end{proof}
    		
    		If $X$ is a smooth projective Riemann surface and $D\subset X$ a finite set of points then \eqref{eq:es complement} is obtained by applying $\mathrm{gr}^W_2$ to the  long exact sequence:
    		$$0\rightarrow H^1(X)\rightarrow H^1(X\min D)\stackrel{\mathrm{Res}_D}{\longrightarrow} H^0(D)(-1) \rightarrow H^2(X)\rightarrow H^2(X\min D)\rightarrow 0\ ,$$
    	where $\mathrm{Res}_D$ is the Poincar\'{e}--Leray residue along $D$. In general, the relation between \eqref{eq:es complement} and residue morphisms is as follows. The Verdier dual of the long exact sequence \eqref{eq:les relative homology} is
    		$$\cdots\rightarrow H^n(X,D'_i)\rightarrow H^n(X\min D) \rightarrow H^{n-1}(D_i\min D(i))(-1) \rightarrow H^{n+1}(X,D'_i)\rightarrow \cdots$$
    		(note that the arrows go in the same direction since \eqref{eq:les relative homology} is written in terms of homology). We need to compute the morphism in the middle
    		$$H^n(X\min D)\rightarrow H^{n-1}(D_i\min D(i))(-1)$$
    		in (smooth) de Rham cohomology. As the next proposition shows, it is, up to a sign, given by the residue $\mathrm{Res}_{D_i}$ along $D_i$. Recall (see e.g. \cite[III.4]{phambook}) that the latter is defined locally on logarithmic forms by the formula
    		$$\mathrm{Res}_{D_i}\left(\frac{dz}{z}\wedge\alpha+\beta\right) = 2\pi i\, \alpha_{|D_i}\ ,$$
    		where $z$ is a local coordinate for $D_i$ and $\alpha$, $\beta$ do not have singularities along $D_i$ .
    		
    		\begin{proposition}\label{prop:sign residue}
    		We have a commutative diagram
    		$$\xymatrixcolsep{4pc}\xymatrix{
    		H_n^\B(X,D)\otimes_\QQ\CC \ar[r]^{\partial_i} \ar[d]_{\simeq} & H_{n-1}^\B(D_i,D(i)) \otimes_\QQ\CC \ar[d]^{\simeq}\\
    		H^n_{\dR,\mathcal{C}^\infty}(X,D)^\vee \ar[r]^{\partial_i^{\dR}} & H^{n-1}_{\mathrm{dR},\mathcal{C}^\infty}(D_i,D(i))^\vee \\
    		H^n_{\mathrm{dR},\mathcal{C}^\infty}(X\backslash D) \ar[u]^{\simeq}\ar[r]^-{(-1)^{n-1}\mathrm{Res}_{D_i}} & H^{n-1}_{\mathrm{dR},\mathcal{C}^\infty}(D_i\min D(i))\ar[u]_{\simeq}
    		}$$
    		where the top vertical arrows are induced by the comparison between (smooth) de Rham and Betti cohomology, and the bottom vertical arrows are induced by the Verdier  duality  isomorphism as in Theorem \ref{thm: PDNCD}, where we recall that the dimension of $X$ is $n$.
    		\end{proposition}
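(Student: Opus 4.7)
The top square commutes by the naturality of the comparison isomorphism between analytic de Rham and Betti cohomology, applied to the long exact sequence of the triple $(D'_i, D, X)$ together with the excision identification $H_\bullet(D, D'_i) \simeq H_\bullet(D_i, D(i))$. This is formal and is the standard compatibility of the de Rham theorem with relative structures.

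The content of the proposition lies in the bottom square, which identifies the abstract connecting morphism $\partial^\dR$, transported via Verdier duality, with the explicit Poincar\'e--Leray residue up to a sign. The plan is to make both vertical isomorphisms completely explicit using logarithmic differential forms by appealing to Proposition \ref{prop:PD log forms conjugate}, or rather its holomorphic variant without complex conjugation (see Remark \ref{rem:without conjugation}). Under this description, the Verdier duality pairing $H^n_{\dR,\mathrm{an}}(X\setminus D) \otimes H^n_{\dR,\mathrm{an}}(X,D) \to \CC$ is given on representatives by $(\omega,\mu) \mapsto \int_X \mu\wedge \omega$, and similarly on $D_i$.

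Representing $[\omega]\in H^n_{\dR,\mathrm{an}}(X\setminus D)$ by a global logarithmic form $\omega\in \Gamma(X,\Omega^n_X(\log D))$ and denoting by $\iota:H^{n-1}(D_i,D(i))\to H^n(X,D)$ the natural map in the long exact sequence of the pair, the transpose characterisation of $\partial^\dR$ reduces the claim to the identity of integrals
\[
\int_X \iota(\mu)\wedge\omega \;=\; (-1)^{n-1}\,(2\pi i)\int_{D_i}\mu\wedge\alpha|_{D_i},
\]
for $\mu$ a test class on $D_i$, where $\omega = \frac{dz}{z}\wedge\alpha+\beta$ locally near $D_i$ in a normal coordinate $z$. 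The key local computation is carried out in the real-oriented blow-up $\widetilde X$ along $D_i$: choosing a representative of $\iota(\mu)$ supported in a tubular neighborhood (obtained by multiplying a smooth extension of $\mu$ by a cut-off in $\rho = |z|$) and applying Stokes' theorem for manifolds with corners to the polar-smooth form $\iota(\mu)\wedge\omega$ on $\widetilde X$, one reduces the left-hand side to an integral over the boundary circle bundle $\{\rho=0\}$ above $D_i$. The fibrewise angular integral $\int_0^{2\pi}\,d\theta$ then produces the factor $2\pi i$ and recovers the residue $\alpha|_{D_i}$ by the Cauchy formula at the level of currents.

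The principal obstacle will be the careful bookkeeping of the sign. Three sources of signs interact: the Koszul rule when moving the $1$-form $\frac{dz}{z}$ past the $(n-1)$-form $\alpha$ (this is where the dominant $(-1)^{n-1}$ arises); the outward-orientation convention for the boundary of $\widetilde X$; and the Tate twist absorbed by the Verdier duality isomorphism via the comparison $c:\QQ(-1)_\dR\otimes\CC\to\QQ(-1)_\B\otimes\CC$, $1\mapsto 2\pi i$. Verifying that these combine to exactly $(-1)^{n-1}$, with no hidden $-1$, is the only non-formal step and will require fixing once and for all the orientation conventions used in Theorem \ref{thm: PDNCD} and Proposition \ref{prop:PD log forms conjugate}.
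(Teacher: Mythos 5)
Your plan is correct and follows essentially the same route as the paper's proof: the top square is dealt with formally (the paper simply \emph{defines} $\partial^{\dR}$ by it), and the bottom square is reduced to the integral identity $\int_X \nu\wedge d\widetilde{\omega} = (-1)^{n-1}\int_{D_i}\mathrm{Res}_{D_i}(\nu)\wedge\omega$, proved by Stokes near $D_i$ followed by a fibrewise Cauchy/angular integral. The paper realises the connecting map as $[\omega]\mapsto[d\widetilde{\omega}]$ and takes a limit over tubular neighbourhoods $X_\varepsilon$ rather than using a cut-off representative on the real-oriented blow-up, and its sign accounting is $(-1)^{n-1}=(-1)^n\cdot(-1)$ (Koszul sign from $d(\nu\wedge\widetilde{\omega})$ times the negative orientation of $\partial X_\varepsilon$), but these are cosmetic differences from what you propose.
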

    		
    		\begin{proof}
    		Since the only subtle point is the sign $(-1)^{n-1}$ we give a  proof in the case where $D$ has a single irreducible component and drop the index $i$, leaving the general case as an exercise for the reader. The map $\partial^\dR$ in the diagram is defined so that the upper square commutes; its transpose $(\partial^\dR)^\vee:H^{n-1}_{\mathrm{dR},\mathcal{C}^\infty}(D) \rightarrow H^n_{\mathrm{dR},\mathcal{C}^\infty}(X,D)$ is computed as follows. For a closed smooth $(n-1)$-form $\omega$ on $D$ one has $(\partial^\dR)^\vee[\omega]=[d\widetilde{\omega}]$ where $\widetilde{\omega}$ is any smooth $(n-1)$-form on $X$ whose restriction to $D$ is $\omega$. This is because Stokes' theorem then gives
    		$$\int_\gamma d\widetilde{\omega} = \int_{\partial\gamma} \omega $$
    		for every relative $n$-chain $\gamma$ on $(X,D)$. One then needs to prove that the lower square commutes. We use Proposition \ref{prop:PD simpler log forms conjugate} and Remark \ref{rem:without conjugation} and let $\nu$ be a closed section of $\A^n_X(\log D)$, and $\omega$ be a closed section of $\A^{n-1}_D$. For $\widetilde{\omega}$ chosen as above, we need to prove the equality:
    		$$\int_X \nu\wedge d\widetilde{\omega} = (-1)^{n-1} \int_D  \mathrm{Res}_D(\nu)\wedge\omega\ .$$
    		We note as in Lemma \ref{lem:wedge product integrable simpler} that since $d\widetilde{\omega}$ vanishes along $D$, the left-hand side is an absolutely convergent integral. For small enough $\varepsilon>0$, we let $T_\varepsilon\rightarrow D$ denote an open tubular $\varepsilon$-neighbourhood of $D$ inside $X$ and write $X_\varepsilon=X\min T_\varepsilon$. We then have
    		$$\int_X\nu\wedge d\widetilde{\omega} = \lim_{\varepsilon\rightarrow 0} \int_{X_\varepsilon}\nu\wedge d\widetilde{\omega} \ .$$
    		By Stokes' theorem we have
    		$$\int_{X_\varepsilon}\nu\wedge d\widetilde{\omega} = (-1)^n \int_{X_\varepsilon}d(\nu\wedge\widetilde{\omega}) = (-1)^n \int_{\partial X_\varepsilon} (\nu\wedge\widetilde{\omega})_{|\partial X_\varepsilon}\ .$$
    		Now $\partial X_\varepsilon$ is the sphere bundle of the normal bundle of $D$ inside $X$, with the opposite orientation from that induced from $X$, and Cauchy's theorem gives
    		$$\lim_{\varepsilon\rightarrow 0}\int_{\partial X_\varepsilon}(\nu\wedge\widetilde{\omega})_{|\partial X_\varepsilon} = -\int_D \mathrm{Res}_D(\nu\wedge\widetilde{\omega}) = -\int_D \mathrm{Res}_D(\nu)\wedge\omega\ ,$$
    		which completes the proof.
    		\end{proof}

    		\begin{proposition}\label{prop:sign iterated residue}
    		We have a commutative diagram
    		$$\xymatrix{
    		H^\B_n(X,D)\otimes_\QQ\CC \ar[r]^-{\partial} \ar[d]_{\simeq} & \bigoplus_{|I|=n} H_0^\B(D_I)\otimes_\QQ\CC \ar[d]^{\simeq} \\
    		H^n_{\dR,\mathcal{C}^\infty}(X\min D) \ar[r]_-{R} & \bigoplus_{|I|=n} H^0_{\dR,\mathcal{C}^\infty}(D_I)
    		}$$
    		where 
    		\begin{enumerate}
    		    \item the vertical isomorphisms are induced by the comparison between (smooth) de Rham and Betti cohomology and by Verdier duality;
    		    \item  $\partial$ is the composition of the partial boundary maps $\partial_{D_i}$ for $i\in I$;
    		    \item  $R$ is $(-1)^{\frac{n(n-1)}{2}}$ times the composition of the  maps $\mathrm{Res}_{D_i}$ for $i\in I$. 
    		\end{enumerate}
    		\end{proposition}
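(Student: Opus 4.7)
The plan is to proceed by induction on $n$. For the base case $n=1$, the claim is exactly Proposition \ref{prop:sign residue}, since $(-1)^{n(n-1)/2}=1=(-1)^{n-1}$ when $n=1$, and no composition is required.

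For the inductive step, fix an ordering $I = \{i_1 < \cdots < i_n\}$ and single out the component $D_{i_1}$. Applying Proposition \ref{prop:sign residue} to $(X,D)$ with the distinguished component $D_{i_1}$ yields a commutative diagram in which the partial boundary map $\partial_{D_{i_1}}$ corresponds, under the comparison isomorphism and Verdier duality, to $(-1)^{n-1}\mathrm{Res}_{D_{i_1}}$. The target pair is $(D_{i_1},D(i_1))$, a smooth projective variety of dimension $n-1$ equipped with a simple normal crossing divisor $D(i_1) = \bigcup_{j\neq i_1}(D_{i_1}\cap D_j)$. Applying the inductive hypothesis to this pair with the subset $\{i_2,\ldots,i_n\}$ (where $D_{i_1}\cap D_{i_k}$ plays the role of $D_{i_k}$) shows that the composition of the remaining partial boundary maps corresponds, under the analogous vertical identifications, to $(-1)^{(n-1)(n-2)/2}$ times the composition of the residues $\mathrm{Res}_{D_{i_1}\cap D_{i_k}}$ for $k=2,\ldots,n$. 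Composing the two gives total sign
$$(-1)^{n-1}\cdot (-1)^{(n-1)(n-2)/2} = (-1)^{(n-1)n/2},$$
which is the claimed sign.

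Two compatibilities have to be verified for the induction to chain correctly. First, on the topological side, the partial boundary $\partial_{D_{i_1}\cap D_{i_k}}$ in the pair $(D_{i_1},D(i_1))$ precomposed with $\partial_{D_{i_1}}$ in $(X,D)$ should equal the partial boundary $\partial_{D_{i_k}}$ followed by $\partial_{D_{i_1}}$ considered as iterated boundaries on $(X,D)$. This is a direct consequence of the functoriality of the long exact sequence \eqref{eq:les relative homology} and excision, together with the fact that all the maps in sight factor through the cellular/stratified description of the skeletal pieces. Second, on the de Rham side, the Poincar\'{e}--Leray residue $\mathrm{Res}_{D_{i_1}\cap D_{i_k}}$ viewed inside $D_{i_1}$ agrees with the iterated double residue on $X$, which is immediate from the local computation: in local coordinates where $D_{i_1}=\{z_1=0\}$ and $D_{i_k}=\{z_k=0\}$, both compute $(2\pi i)^2\alpha_{|z_1=z_k=0}$ for $\frac{dz_1}{z_1}\wedge\frac{dz_k}{z_k}\wedge \alpha$.

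The main obstacle will be the careful bookkeeping of these two compatibilities, since the inductive step requires passing from $(X,D)$ to $(D_{i_1},D(i_1))$ and identifying the new residue and boundary operations with the iterated ones on the original pair. Once these identifications are in place, the sign $(-1)^{n(n-1)/2}$ arises automatically as the telescoping sum $(n-1)+(n-2)+\cdots+0$ of the signs produced by each application of Proposition \ref{prop:sign residue}, and the commutativity of the diagram is a formal consequence of stacking the two commutative squares provided by Proposition \ref{prop:sign residue} and the inductive hypothesis.
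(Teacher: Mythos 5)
Your proposal is correct and is essentially the paper's own argument: the paper proves this by iteratively applying Proposition \ref{prop:sign residue} and observing that the accumulated sign is $(-1)^{1+2+\cdots+(n-1)}=(-1)^{\frac{n(n-1)}{2}}$, exactly your telescoping induction. Your explicit flagging of the two compatibilities (iterated partial boundaries versus boundaries on the pair $(D_{i_1},D(i_1))$, and iterated residues versus residues computed on $D_{i_1}$) is a reasonable elaboration of what the paper leaves implicit.
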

  
    		\begin{proof} This follows from iteratively applying Proposition \ref{prop:sign residue} and noting that $1+2+\cdots+(n-1)=\frac{n(n-1)}{2}$.
    		\end{proof}
    		
    		The next proposition is well-known and gives a concrete interpretation of $F^nH^n(X\min D)$ in terms of the complex of holomorphic logarithmic forms $\Omega^\bullet_X(\log D)\subset \A^\bullet_X(\log D)$.
    		
    		\begin{proposition}\label{prop:global log Hodge}
    		The map $\Gamma(X,\Omega^n_X(\log D)) \rightarrow H^n_{\mathrm{dR},\mathcal{C}^\infty}(X\min D)$  sending a (necessarily closed) global logarithmic form to its cohomology class yields an isomorphism:
    		\begin{equation}\label{eq:log forms Hodge filtration}
        	 \Gamma(X,\Omega^n_X(\log D)) \simeq F^nH^n_{\mathrm{dR},\mathcal{C}^\infty}(X\min D) \ .
    		\end{equation}
    		\end{proposition}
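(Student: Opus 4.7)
The plan is to invoke Deligne's Hodge-theoretic description of $H^\bullet_{\dR,\mathrm{an}}(X\min D)$ in terms of the logarithmic de Rham complex, and then observe that the truncation defining $F^n$ collapses to a single term in top degree.

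First I would recall from \cite{delignehodge2} that the inclusion $\Omega^\bullet_X(\log D)\hookrightarrow (j_{X\min D}^X)_*\Omega^\bullet_{X\min D}$ is a quasi-isomorphism (this is the holomorphic analogue of the first quasi-isomorphism in Proposition \ref{prop:PD simpler log forms conjugate}, which is standard), and hence
$$H^r_{\dR,\mathrm{an}}(X\min D)\simeq \mathbb{H}^r(X,\Omega^\bullet_X(\log D))\ .$$
Moreover, Deligne endows this with a Hodge filtration given by the \emph{stupid} (na\"{i}ve) filtration $F^p\Omega^\bullet_X(\log D)=\Omega^{\geq p}_X(\log D)$, so that
$$F^pH^r_{\dR,\mathrm{an}}(X\min D)=\mathrm{Im}\bigl(\mathbb{H}^r(X,\Omega^{\geq p}_X(\log D))\to \mathbb{H}^r(X,\Omega^\bullet_X(\log D))\bigr)\ ,$$
and in fact the spectral sequence attached to this filtration degenerates at $E_1$, so this image map is injective.

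Now I would specialise to $p=r=n$. Since $X$ has dimension $n$, the sheaves $\Omega^q_X(\log D)$ vanish for $q>n$, hence the complex $\Omega^{\geq n}_X(\log D)$ is concentrated in degree $n$ and equals $\Omega^n_X(\log D)[-n]$. Consequently
$$F^nH^n_{\dR,\mathrm{an}}(X\min D)\simeq \mathbb{H}^n(X,\Omega^n_X(\log D)[-n])\simeq H^0(X,\Omega^n_X(\log D))=\Gamma(X,\Omega^n_X(\log D))\ ,$$
and the identification is induced by the natural map of complexes $\Omega^n_X(\log D)[-n]\hookrightarrow \Omega^\bullet_X(\log D)$, which sends a global logarithmic $n$-form to its class in hypercohomology. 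This is precisely the map in the statement, proving the claim.

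There is no real obstacle: the argument is entirely a matter of unpacking Deligne's theorem (existence of the logarithmic resolution, stupid filtration gives the Hodge filtration, $E_1$-degeneration) together with the trivial observation that the Hodge filtration truncation in top degree leaves only one non-zero term. The only point that would require care in a more detailed write-up is checking that the map on the nose (sending a global section to its \v{C}ech--Dolbeault class, or equivalently its image under $\Omega^n_X(\log D)[-n]\to \Omega^\bullet_X(\log D)$) agrees with the map ``take the cohomology class of a closed form'' used in applications; this is immediate from the construction of the isomorphism $\mathbb{H}^n(X,\Omega^\bullet_X(\log D))\simeq H^n_{\dR,\mathrm{an}}(X\min D)$.
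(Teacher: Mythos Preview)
Your proposal is correct and follows essentially the same approach as the paper: invoke Deligne's description of the Hodge filtration on $H^\bullet(X\min D)$ via the stupid filtration on $\Omega^\bullet_X(\log D)$, use $E_1$-degeneration, and read off the top piece. The paper phrases the last step as $E_1^{n,0}=\mathrm{gr}^n_FH^n$, which coincides with $F^nH^n$ since $F^{n+1}H^n=0$, while you compute $\mathbb{H}^n$ of the one-term complex $\Omega^n_X(\log D)[-n]$ directly; these are the same computation.
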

    		
    		\begin{proof}
    		By \cite{delignehodge2}, the Hodge filtration on the cohomology of $X\min D$ is induced, via the isomorphism $H^k(X\min D)\simeq \mathbb{H}^k(X,\Omega^\bullet_X(\log D))$, by the  filtration $F^p\Omega^\bullet_X(\log D)=\Omega^{\bullet\geq p}_X(\log D)$. The corresponding spectral sequence $E_1^{p,q}=H^q(X,\Omega^p_X(\log D))$ degenerates at $E_1$, so $E_1^{p,q} = \mathrm{gr}^p_FH^{p+q}(X\min D)$. Now set $(p,q)=(n,0)$.
    		\end{proof}
    		
    		\begin{corollary}\label{coro:global log Hodge pair}
    		Let $(X,A,B)$ satisfy $(\star)_\CC$. Then the natural map $\Gamma(X,\Omega^n_X(\log A))\rightarrow H^n_{\mathrm{dR},\mathcal{C}^\infty}(X\min A\;\mathrm{mod}\; B)$ sending a (necessarily closed) global logarithmic form to its cohomology class yields an isomorphism:
    		$$\Gamma(X,\Omega^n_X(\log A)) \simeq F^nH^n_{\mathrm{dR},\mathcal{C}^\infty}(X\min A \;\mathrm{mod}\;B)\ .$$
    		\end{corollary}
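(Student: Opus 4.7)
The plan is to reduce to Proposition \ref{prop:global log Hodge} using the long exact sequence of the pair $(X\min A, B\min A\cap B)$ together with a dimension count on the Hodge filtration.

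First, I would observe that since $B$ has codimension one in $X$, the pullback of any holomorphic $n$-form on $X$ (even with log singularities along $A$) to $B\min A\cap B$ is automatically zero for degree reasons. Hence the natural map $\Gamma(X,\Omega^n_X(\log A))\to H^n_{\mathrm{dR,an}}(X\min A)$ from Proposition \ref{prop:global log Hodge} factors through the relative cohomology, yielding a well-defined map $\Gamma(X,\Omega^n_X(\log A))\to H^n_{\mathrm{dR,an}}(X\min A\lmod B)$ compatible with the map to $H^n_{\mathrm{dR,an}}(X\min A)$.

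Next, I would invoke the long exact sequence of mixed Hodge structures
\[
\cdots \to H^{n-1}(B\min A\cap B) \to H^n(X\min A\lmod B) \to H^n(X\min A) \to H^n(B\min A\cap B) \to \cdots
\]
and apply the exact functor $F^n$. Since $B\min A\cap B$ is a variety of dimension $n-1$, its Hodge filtration in any cohomological degree vanishes in degree $\geq n$; in particular $F^nH^{n-1}(B\min A\cap B)=0$ and $F^nH^n(B\min A\cap B)=0$. Therefore the map $F^nH^n(X\min A\lmod B)\to F^nH^n(X\min A)$ is an isomorphism.

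Combining this with the isomorphism $\Gamma(X,\Omega^n_X(\log A))\simeq F^nH^n_{\mathrm{dR,an}}(X\min A)$ from Proposition \ref{prop:global log Hodge}, and using the compatibility noted in the first step, yields the desired isomorphism $\Gamma(X,\Omega^n_X(\log A))\simeq F^nH^n_{\mathrm{dR,an}}(X\min A\lmod B)$. The main subtlety is checking that the map we construct really lands in $F^n$ and coincides with the one induced by Deligne's filtration on the relative log de Rham complex; this follows from Hodge-theoretic functoriality of the long exact sequence and the fact that the inclusion $\Omega^{\bullet\geq n}_X(\log A)\hookrightarrow \Omega^\bullet_X(\log A)$ is compatible with the relative version where we impose vanishing along $B$, which holds automatically in degree $n$.
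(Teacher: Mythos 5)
Your proposal is correct and follows essentially the same route as the paper: the paper's proof simply cites Proposition \ref{prop:grW 0 and 2n} for $\mathrm{gr}^n_F=F^n$, whose own proof is exactly the long exact sequence of the pair together with the vanishing of $F^n$ on the cohomology of the $(n-1)$-dimensional variety $B\min A\cap B$, which you have unpacked inline. Combining this with Proposition \ref{prop:global log Hodge} is precisely the paper's argument.
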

    		
    		\begin{proof}
    		This follows from Proposition \ref{prop:global log Hodge} and the isomorphism of Proposition \ref{prop:grW 0 and 2n} for $\mathrm{gr}^n_F=F^n$.
    		\end{proof}

    \subsection{Computation of the de Rham projection}
    		Let $X$ be a smooth projective variety over $\QQ$ of dimension $n$ and let $D$ be a simple normal crossing divisor in $X$. We form the following commutative diagram, where the second and third row are induced by the exact sequences \eqref{eq:es relative} and \eqref{eq:es complement}. The vertical maps between these two rows are induced by the comparison between de Rham and Betti cohomology and by Verdier duality. We recall from Proposition \ref{prop:sign iterated residue} that $\partial$ denotes the composite of partial boundary maps, and $R$ denotes the composite of residue maps times the sign $(-1)^{\frac{n(n-1)}{2}}$.
    		
    		$$\xymatrix{
    		 &H_n^{\mathrm{B}}(X,D)\otimes_\QQ\CC \ar@{->>}[d]& \\
    		 &\mathrm{gr}_0^WH_n^{\mathrm{B}}(X,D)\otimes_\QQ\CC\; \ar@{^(->}[r]^{\partial}\ar[d]_{\simeq} & \displaystyle\bigoplus_{|I|=n}H_0^{\mathrm{B}}(D_I)\otimes_\QQ\CC \ar[d]_-{\simeq} \\
    		 &\mathrm{gr}_{2n}^W H^n_{\mathrm{dR}}(X\min D)\otimes_\QQ\CC\;\ar@{^(->}[r]^{R} & \displaystyle\bigoplus_{|I|=n}H^0_{\mathrm{dR}}(D_I)\otimes_\QQ\CC \\
    	     \Gamma(X,\Omega_X^n (\log D))\ar[r]^{\simeq}&F^nH^n_{\mathrm{dR}}(X\min D)\otimes_\QQ\CC \ar[u]^{\stackrel{?}{\simeq}}\ar@{^(->}[d]& \\
    		&H^n_{\mathrm{dR}}(X\min D)\otimes_\QQ\CC & 
    		}$$
    		The arrow marked $?$ in the diagram is an isomorphism if and only if $H^n(X,D)$ is separated because of Verdier duality: $H^n(X\min D)\simeq H^n(X,D)^\vee(-n)$. In this case, the vertical composite
    		$$H_n^{\mathrm{B}}(X,D)\otimes_\QQ\CC \longrightarrow H^n_\dR(X\min D)\otimes_\QQ\CC\simeq H_n^\dR(X,D)\otimes_\QQ\CC\ .$$ 
    		is the map $c_0^\vee$, introduced in \eqref{eq:c0vee definition}, for the object $H^n(X,D)\in\mathcal{H}$.
    		We have thus proved the following theorem, which gives a means to compute the map $c_0^\vee$ (and hence the de Rham projection) in practice.
    		
    		\begin{theorem}\label{coro:c0vee}
    		Let $X$ be a smooth projective variety over $\QQ$ of dimension $n$, let $D$ be a simple normal crossing divisor in $X$, and assume that $H^n(X,D)$ is separated. For a class $\gamma\in H_n^\B(X,D)$ in Betti homology, $c_0^\vee(\gamma)\in H^n_\dR(X\min D)\otimes_\QQ\CC$ is the class of  the unique global logarithmic form $$(2\pi i)^{-n}\,\nu_\gamma\;\in \Gamma(X,\Omega^n_X(\log D))$$ 
    		such that for every set $\{i_1,\ldots,i_n\}$ of $n$ irreducible components of $D$ we have
    		$$\mathrm{Res}_{D_{i_n}}\cdots \mathrm{Res}_{D_{i_1}}((2\pi i)^{-n}\,\nu_\gamma) = (-1)^{\frac{n(n-1)}{2}} \partial_{D_{i_n}}\cdots \partial_{D_{i_1}}(\gamma)\ .$$
    		\end{theorem}

    		\begin{remark}\label{rem:c0vee general}
    		In the general case of an object $H=H^n(X\min A \;\mathrm{mod}\; B)$, for $(X,A,B)$ satisfying $(\star)_\QQ$, Proposition \ref{prop:grW 0 and 2n} and Proposition \ref{prop:relative separated} imply that in order to compute $c_0^\vee(\gamma)$, for $\gamma\in H_\B^\vee$, one is reduced to the case of $H^n(X,B)$. Indeed, one simply applies the recipe given in Theorem \ref{coro:c0vee} to the image of $\gamma$ in $H_n^\B(X,B)$.
    		Note that we have the inclusion $\Gamma(X,\Omega^n_X(\log B))\subset \Gamma(X,\A^n_X(\log A\cup B)(-A))$, so that a differential form $\nu_\gamma$ from Theorem \ref{coro:c0vee} satisfies the assumption of Theorem \ref{thm:sv recipe}.
            \end{remark}
            
            \begin{remark}\label{rem: coefficients nu}
            Let us fix, as in Lemma \ref{lem: coefficients c zero}, a number field $K$ such that for every $I$ with $|I|=n$, $D_I\times_\QQ K$ is a disjoint union of $\mathrm{Spec}(K)$. Then the logarithmic form $\nu_\gamma$ lives in the $K$-subspace
            $$\Gamma(X_K,\Omega^n_{X_K}(\log D_K)) \subset \Gamma(X,\Omega^n_X(\log D))\ ,$$
            where $X_K=X\times_\QQ K$, $D_K=D\times_\QQ K$, and $\Omega_{X_K}(\log D_K)$ denotes the (Zariski) sheaf of algebraic differential $n$-forms on $X_K$ with logarithmic singularities along $D_K$.
            \end{remark}

            \begin{remark}\label{rem:simple ncd}
            We worked with simple normal crossing divisors $D$ in order to apply Poincar\'{e} duality to the multiple intersections $D_I$. In order to treat the general case one has to replace those with their normalisations as in \cite{delignehodge2}, and we leave the details to the reader.
            \end{remark}

    	\subsection{Examples}
    
            \subsubsection{The case of $\mathbb{P}^1$} \label{ex:dR projection P1}
            
            Let $A,B\subset\PP^1(\CC)$ be  disjoint finite sets of points and set $H=H^1(\PP^1\min A,B)$. Let  $\gamma_{b_1,b_2}$ denote any path on $\mathbb{P}^1(\CC)\min A$ from $b_1 \in B$ to $b_2 \in B$. We have 
            \begin{equation}\label{eq:c0veeP1}
            c_0^\vee([\gamma_{b_1,b_2}]) = \frac{1}{2\pi i} \,d\log\left(\frac{z-b_2}{z-b_1}\right)\ .
            \end{equation}
            (If $b_i=\infty$ then we replace $z-b_i$ in this expression by $1$.)
            This follows from Theorem \ref{coro:c0vee} and the computations 
            $$\partial_{b_1}[\gamma_{b_1,b_2}] = -1 \;\; , \;\; \partial_{b_2}[\gamma_{b_1,b_2}]= +1$$
            and
            $$\mathrm{Res}_{b_1}\left(\frac{1}{2\pi i}d\log\left(\frac{z-b_2}{z-b_1}\right)\right) = -1 \;\; , \;\; \mathrm{Res}_{b_2}\left(\frac{1}{2\pi i}d\log\left(\frac{z-b_2}{z-b_1}\right)\right) = +1\ .$$

            It is important to note (see Remark \ref{rem:c0vee general}) that the differential form \eqref{eq:c0veeP1} only depends on the class of $\gamma_{b_1,b_2}$ in $H_1^{\mathrm{B}}(\mathbb{P}^1,B)$, i.e., on the pair $(b_1,b_2)$.

            \subsubsection{The case of $(\mathbb{P}^1)^n$}\label{ex: dR projection P1 powers}
            
            We set $H=H^n((\PP^1)^n,D)$ where $D$ is the union of the divisors $z_i=0$ and $z_i=1$ for $i=1,\ldots,n$. The Betti homology group $H_\B^\vee$ is one-dimensional with basis the class of the hypercube $[0,1]^n$. We can then compute:
            \begin{equation}\label{}
            c_0^\vee([0,1]^n) = (-1)^{\frac{n(n+1)}{2}}(2\pi i)^{-n} \frac{dz_1\wedge\cdots\wedge dz_n}{z_1(1-z_1)\cdots z_n(1-z_n)}\cdot 
            \end{equation}\label{eq:c0veeP1powers}
            This follows from Theorem \ref{coro:c0vee} and the computations
            $$\partial_{z_n=1}\partial_{z_{n-1}=1}\cdots \partial_{z_1=1}\,[0,1]^n = +1 $$
            and 
            $$\mathrm{Res}_{z_n=1}\mathrm{Res}_{z_{n-1}=1}\cdots\mathrm{Res}_{z_1=1}\left((2\pi i)^{-n}\frac{dz_1\wedge\cdots \wedge dz_n}{z_1(1-z_1)\cdots z_n(1-z_n)}\right) = (-1)^n\ .$$
            (Since $H$ has rank one it is enough to do the computation at one point.)

    \section{Functoriality of the formula for single-valued integrals}\label{par:sv functoriality}
    
        Let $u:H\rightarrow H'$ be a morphism in $\mathcal{H}$ and let $\omega\in H_\dR$ and $f'\in H'_\dR$ be two classes. Then we have an equality of de Rham periods 
        $$[H',f',u_\dR(\omega)]^{\mm,\dR} = [H,u_\dR^\vee(f'),\omega]^{\mm,\dR}\ .$$
        In particular these two matrix coefficients have the same image under  the single-valued period map $\s$. When $u$ has geometric origin this has an interpretation in terms of our formula for the single-valued period map (Theorem \ref{thm:sv recipe general}) and gives rise to single-valued analogues of the usual rules of integration.
        We discuss three important special cases.
        
            \subsection{Change of variables}
                
                Let $(X,A,B)$ and $(X',A',B')$ satisfy $(\star)_\QQ$ with $X$ of dimension $n$ and $X'$ of dimension $n'$, and set $H=H^k(X\min A\;\mathrm{mod}\;B)$ and $H'=H^k(X'\min A'\;\mathrm{mod}\; B')$. Let 
	            $$\varphi:(X',B')\rightarrow (X,B)$$
	            be a morphism of pairs such that $\varphi^{-1}(A)\subseteq A'$. 
	            Then it induces a morphism of pairs $(X'\min A',B'\min A'\cap B') \rightarrow (X\min A,B\min A\cap B)$ and we get a pullback morphism
	            $$\varphi^*:H\rightarrow H'\ ,$$
	            whose classical interpretation in terms of periods is integration by substitution (change of variables). The interpretation of the transpose
	            $$\varphi_*=(\varphi^*)^\vee : H'^\vee\rightarrow H^\vee\ .$$
	            in terms of differential forms under the isomorphisms
	            $$H^\vee\simeq H^{2n-k}(X\min B\;\mathrm{mod}\;A)(n) \;\;\mbox{ and }\;\; H'^\vee\simeq H^{2n'-k}(X'\min B'\;\mathrm{mod}\; A')(n') \ ,$$
	            is not obvious in general. It should be thought of as `integration along the fibers' of $\varphi$ whenever this makes sense. 
	            In the setting of Theorem \ref{thm:sv recipe}, the functoriality of $\s$ now translates as the formula
	            \begin{equation}\label{eq:functoriality sv recipe}
	            \int_{X'(\CC)} \nu'\wedge\overline{\varphi^*(\omega)}= \int_{X(\CC)} \varphi_*(\nu')\wedge\overline{\omega}\ ,
	            \end{equation}
	            which is an instance of a projection formula.

            \subsection{Cauchy and Stokes}\label{par: cauchy stokes}

                Since the cohomological interpretations of Cauchy's theorem and Stokes's theorem are Poincar\'{e} dual to each other, their interpretations in terms of our formula for the single-valued map are one and the same. A simple special case is as follows. If $X$ is a smooth projective variety of dimension $n$ over $\QQ$ and $D$ a smooth divisor in $X$, we have a morphism in $\mathcal{H}$:
                $$u:H^{n-1}(D)\rightarrow H^n(X,D)\ .$$
                The functoriality of our formula for the single-valued map is contained in the following  formula (see  the proof of Proposition \ref{prop:sign residue}):
                $$\int_{X(\CC)}\nu\wedge d\widetilde{\omega} = (-1)^{n-1} \int_{D(\CC)} \mathrm{Res}(\nu)\wedge \omega\ .$$
                It holds for $\omega\in\Gamma(D,\A^{n-1}_D)$ and $\nu\in\Gamma(X,\A^n_X(\log D))$ closed forms (in the setting of Theorem \ref{thm:sv recipe general}, $\omega$ should be replaced with $\mathrm{conj}^*(\omega)$.). It could be called a `Cauchy--Stokes theorem' and is an important tool in the computation of single-valued periods. 
                \begin{example}
               Let $X=\PP^1$, $D=\{0,\infty\}$. For any smooth function $\psi$ on $\PP^1(\CC)$ 
                $$\frac{1}{2\pi i}\int_{\PP^1(\CC)} \frac{dz}{z}\wedge d\psi(\overline{z}) = \psi(0)-\psi(\infty)\ .$$
                It is the single-valued analogue of the formula $\int_\infty^0 d\psi(z)= \psi(0)-\psi(\infty)\ .$
            \end{example} 
                
            \subsection{Fubini}\label{par:fubini}
                Let $(X_1,A_1,B_1)$ and $(X_2,A_2,B_2)$ satisfy $(\star)_\QQ$ with $X_j$ of dimension $n_j$ and set $X=X_1\times X_2$, $A=A_1\times X_2\cup X_1\times A_2$ and $B=B_1\times X_2\cup X_1\times B_2$.
                For classes $\gamma_j\in H_{n_j}^\B(X_j\min A_j\;\mathrm{mod}\;B_j)$ we denote by $\gamma_1\times\gamma_2$ the class induced in  $H_{n_1+n_2}^\B(X\min A\;\mathrm{mod}\; B)$ by the K\"{u}nneth formula. By Theorem \ref{coro:c0vee}, the differential forms $\nu_{\gamma_1}=c_0^\vee(\gamma_1)$, $\nu_{\gamma_2}=c_0^\vee(\gamma_2)$ and $\nu_{\gamma_1\times\gamma_2}=c_0^\vee(\gamma_1\times\gamma_2)$ are related by 
                
                \begin{align*}
                \nu_{\gamma_1\times\gamma_2} & = (-1)^{\frac{(n_1+n_2)(n_1+n_2-1)}{2}}(-1)^{\frac{n_1(n_1-1)}{2}}(-1)^{\frac{n_2(n_2-1)}{2}}\nu_{\gamma_1}\wedge\nu_{\gamma_2} \\
                & = (-1)^{n_1n_2}\nu_{\gamma_1}\wedge\nu_{\gamma_2}\ .
                \end{align*}
                
                For differential forms $\omega_j\in \Gamma(X_j,\Omega^{n_j}_{X_j}(\log A_j))$ we thus get
                \begin{align*}
                 \int_{X_1(\CC)\times X_2(\CC)}\nu_{\gamma_1\times\gamma_2}\wedge\overline{\omega_1\wedge\omega_2} & = (-1)^{n_1n_2} \int_{X_1(\CC)\times X_2(\CC)} \nu_{\gamma_1}\wedge\nu_{\gamma_2}\wedge \overline{\omega_1}\wedge\overline{\omega_2}\\    
                 & = \int_{X_1(\CC)\times X_2(\CC)} (\nu_{\gamma_1}\wedge\overline{\omega_1})\wedge(\nu_{\gamma_2}\wedge\overline{\omega_2}) \\
                 & = \int_{X_1(\CC)}\nu_{\gamma_1}\wedge\overline{\omega_1}\; \int_{X_2(\CC)} \nu_{\gamma_2}\wedge\overline{\omega_2}\ .
                \end{align*}
                This, after inserting powers of $2\pi i$,  is the single-valued analogue of Fubini's theorem
                $$\int_{\gamma_1\times\gamma_2}\omega_1\wedge\omega_2 = \int_{\gamma_1}\omega_1\int_{\gamma_2}\omega_2\ .$$

\section{Some examples} \label{sect: examples}

We illustrate the definitions above with some simple examples.

    \subsection{Single-valued \texorpdfstring{$2\pi i$}{2 pi i}}\label{par: example lefschetz}
    We wish to compute explicitly the single-valued Lefschetz period $\s(\LL^{\mm,\dR})$ where $\LL^{\mm,\dR} = [H^1(\GG_m), [\frac{dz}{z}]^{\vee}, [\frac{dz}{z}]]^{\mm,\dR}$. Since 
    $$H^1(\PP^1 \backslash \{0,\infty\})^{\vee} \simeq H^1(\PP^1, \{0,\infty\})(1)$$ the dual class $[\frac{dz}{z}]^{\vee}$ can be represented by $\frac{1}{2\pi i} [\nu]$ where $\nu$ is a closed smooth $1$-form on $\PP^1(\CC)$ whose relative cohomology class generates  $H^1(\PP^1, \{0,\infty\})$ and satisfies
    $$\int_{\PP^1(\CC)} \nu\wedge\frac{dz}{z}  = 2\pi i\ .$$
    Since $H^1(\PP^1)=0$, we necessarily have  $\nu = df$, for some smooth function $f$ satisfying $f(\infty)-f(0)=1$. For example, we may take  $f= \frac{|z|^2}{1+ |z|^2}$ to be radial, giving 
    $$\nu = df =   \frac{  z d \overline{z}+ \overline{z} dz }{(1+|z|^2)^2}$$ and we verify by a well-known calculation in polar coordinates, for example, that 
    \begin{equation} \label{Fubini-Study} \int_{\PP^1(\CC)} \nu\wedge\frac{dz}{z} = - \int_{\PP^1(\CC)} \frac{dz \wedge d\overline{z}}{(1+|z|^2)^2} = 2 \pi i\ .\end{equation} 
    We note that $\nu$ is a global section of $\A^1_{\PP^1(\CC)}(\log\{0,\infty\})(-\{0,\infty\})$. It follows from Theorem \ref{thm:sv recipe} that 
    $$\s(\LL^{\mm,\dR}) =\frac{1}{2\pi i} \int_{\PP^1(\CC)} \nu\wedge\frac{d\overline{z}}{\overline{z}} \ .$$
    But since $\nu$ is radial, $\nu\wedge d \log |z|^2 =0$ which implies that  
    $$\nu\wedge \frac{d \overline{z}}{\overline{z}}  = - \nu\wedge \frac{dz}{z} $$ and we find from (\ref{Fubini-Study}) that $\s(\LL^{\mm,\dR})=-1$, as expected.

  \subsection{Universal elliptic curve and non-holomorphic modular forms} \label{par: UnivElliptic}
  For any field $k\subset \CC$ and  $u,v \in  k$  such that $u^3-27 v^2\neq 0$, let $E$ denote the non-singular elliptic curve over $k$ defined by the affine equation $y^2 = 4x^3 -ux -v$. Let $E' = E \backslash \{0\}$ denote the punctured curve.
   The de Rham cohomology of $E$  over $k$ satisfies $H^1_{\dR}(E) = H^1_{\dR}(E')$ and is spanned by the classes of the forms
\begin{equation} \label{EllipticdRbasis}  \frac{dx}{y}  \ \in \  \Gamma(E, \Omega^1_{E}) \quad \hbox{ and } \quad  x \frac{dx}{y} \  \in \  \Gamma(E', \Omega^1_{E'}) \ .\end{equation}
  For any framing, i.e., choice of  ordered basis $\{\alpha, \beta\}$ of the homology $H_1(E'(\CC);\ZZ) \cong H_1(E(\CC);\ZZ)$ such that the intersection pairing $\alpha\cdot \beta =1$,  the corresponding period matrix is 
  $$P = \begin{pmatrix} \omega_1 & \eta_1 \\ \omega_2 & \eta_2  \end{pmatrix} $$
  where $\tau= \omega_2/\omega_1 $ lies in the upper half plane  $\mathbb{H} = \{\tau \in \CC: \mathrm{Im}\, \tau>0\}$. The numbers $\eta_1, \eta_2$ are integrals of $x\frac{dx}{y}$  along $\alpha$ and $\beta$ and are traditionally called quasi-periods.  As is  well-known, there is an isomorphism
  $ \CC /  (\ZZ + \tau \ZZ) \overset{\sim}{\rightarrow} E(\CC)  $
  induced by $ z \mapsto \left(\frac{\wp_{\tau}(z)}{\omega_1^2},  \frac{\wp_{\tau}'(z)}{\omega_1^3}\right) $,  where $\wp_{\tau}(z)$ denotes the Weierstrass $\wp$-function and $\wp_{\tau}'(z)$ its derivative with respect to $z$.  The pullback of  the differential $dx/y$ under this map is  $\omega_1 \,dz$, and the classes $\alpha, \beta$ are represented by the straight paths from $0$ to  $1, \tau$ respectively.

Fricke  and Legendre respectively proved that
\begin{equation}\label{Fricke} \GG_2(\tau) = - \frac{1}{2} \frac{\omega_1 \eta_1 }{(2\pi i)^2} \qquad \hbox{ and } \qquad \omega_1 \eta_2 - \eta_1 \omega_2 = 2\pi i\ ,
\end{equation} 
where, if we denote $q= \exp(2 \pi i \tau)$ as usual, 
$$\GG_2(\tau) =-  \frac{1}{24} +\sum_{n\geq 1} \sigma_1(n) q^n = - \frac{1}{24} + q+ 3 q^2 + 4q^3 + 7 q^4 + \cdots $$  
is the normalised Eisenstein series of weight $2$.  As is well-known, it does not transform like a modular form.  However, its modified real analytic version 
$$\GG^*_2(\tau)  = \GG_2(\tau) + \frac{1}{8 \pi \,\mathrm{Im} \, \tau} $$
does transform like a modular form of weight $2$.  We shall see that it can be interpreted as a single-valued period of the universal elliptic curve.

\begin{corollary} 
The single-valued period matrix $\overline{P}^{-1} P$ equals  
\begin{equation} \label{EllipticSVmatrix}    \begin{pmatrix} \overline{\lambda}^{-1} & 0 \\ 0 & \overline{\lambda}
\end{pmatrix} 
\begin{pmatrix}   \overline{\mm (\tau)}     &    (4 \pi\, \mathrm{Im} (\tau) )^{-1} \left( \mm (\tau)  \overline{\mm 
(\tau)} -1 \right) \\ - 4 \pi \,\mathrm{Im} (\tau)    &      - \mm (\tau)     \end{pmatrix} 
\begin{pmatrix} \lambda & 0 \\  0 & \lambda^{-1} 
\end{pmatrix}   \end{equation}
where we write $\omega_1 = \lambda \, 2 \pi i $ and 
$ \mm (\tau) =  - 8 \pi\,\mathrm{Im}(\tau)\, \GG_2^*(\tau) $. 
\end{corollary}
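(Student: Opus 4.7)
The proof should be a direct matrix computation, carried out in three stages. First, Legendre's relation gives $\det P = 2\pi i$ and therefore $\det\overline{P}=-2\pi i$, so $\overline{P}$ may be inverted explicitly via the formula for the inverse of a $2\times 2$ matrix. I would then multiply against $P$ and simplify the four entries of $\overline{P}^{-1}P$ using $\omega_2=\tau\omega_1$ together with $\eta_2=\tau\eta_1+2\pi i/\omega_1$ (the latter is a restatement of Legendre's relation). Finally, Fricke's identity $\omega_1\eta_1=-2(2\pi i)^2\GG_2(\tau)=8\pi^2\GG_2(\tau)$ and its complex conjugate allow one to eliminate $\eta_1$ and $\overline{\eta_1}$ everywhere in favour of $\GG_2$ and $\overline{\GG_2}$; after the substitution $\omega_1=2\pi i\lambda$ and the definition $\mm(\tau)=-8\pi\,\mathrm{Im}(\tau)\GG_2^*(\tau)=-8\pi\,\mathrm{Im}(\tau)\GG_2(\tau)-1$, each entry should collapse to the claimed form.

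Three of the entries are quick. The $(2,1)$-entry reduces immediately to $|\omega_1|^2(\overline{\tau}-\tau)/(-2\pi i)=-|\omega_1|^2\,\mathrm{Im}(\tau)/\pi$, matching the target $-4\pi|\lambda|^2\,\mathrm{Im}(\tau)$ via $|\omega_1|^2=4\pi^2|\lambda|^2$. For the $(2,2)$-entry one uses Fricke to write $\overline{\omega_1}\eta_1=8\pi^2\GG_2(\tau)\,\overline{\omega_1}/\omega_1=-8\pi^2\GG_2(\tau)\,\overline{\lambda}/\lambda$; collecting terms gives $(\overline{\lambda}/\lambda)\bigl(8\pi\,\mathrm{Im}(\tau)\GG_2(\tau)+1\bigr)=-\mm(\tau)\overline{\lambda}/\lambda$ as desired. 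The $(1,1)$-entry is the complex conjugate computation and yields $\mm(\overline{\tau})\lambda/\overline{\lambda}$ once one adopts the convention $\mm(\overline{\tau}):=\overline{\mm(\tau)}$, which encodes the natural extension of the real-analytic function $\GG_2^*$ to the lower half-plane.

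The main obstacle will be the $(1,2)$-entry, because it mixes the modulus $|\eta_1|^2$ with the cross term $\omega_1\overline{\eta_1}+\overline{\omega_1}\eta_1$. Fricke expresses these as $64\pi^4|\GG_2|^2/|\omega_1|^2$ and $16\pi^2\,\mathrm{Re}\,\GG_2$ respectively; assembling the pieces should produce $(4\pi\,\mathrm{Im}(\tau))^{-1}(\mm(\tau)\mm(\overline{\tau})-1)/|\lambda|^2$ after one expands the product
\[
\mm(\tau)\mm(\overline{\tau})=(-8\pi\,\mathrm{Im}(\tau)\GG_2-1)(-8\pi\,\mathrm{Im}(\tau)\overline{\GG_2}-1)=64\pi^2\,\mathrm{Im}(\tau)^2|\GG_2|^2+16\pi\,\mathrm{Im}(\tau)\mathrm{Re}\,\GG_2+1,
\]
which matches exactly the combination produced from Fricke. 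Beyond that, the only delicate part of the argument is clerical bookkeeping of the signs coming from complex conjugation, in particular $\overline{2\pi i}=-2\pi i$ and $\overline{\tau}-\tau=-2i\,\mathrm{Im}(\tau)$.
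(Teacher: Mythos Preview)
Your proposal is correct and follows exactly the route the paper takes: the paper's entire proof reads ``Follows directly from \eqref{Fricke}'', i.e.\ from the Fricke and Legendre relations, and you have simply supplied the matrix computation that the authors leave to the reader. Your handling of the four entries, including the convention $\mm(\overline{\tau})=\overline{\mm(\tau)}$ and the bookkeeping in the $(1,2)$-entry, is accurate.
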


\begin{proof}  By \eqref{Fricke}, we may write 
$$P =      \begin{pmatrix} 2\pi i  &  - 4 \pi i \,\mathbb{G}_2(\tau) \\  2\pi i \tau   & 1 - 4\pi i \,\tau \mathbb{G}_2(\tau)  \end{pmatrix} \begin{pmatrix} \lambda &  0   \\  0    &  \lambda^{-1}  \end{pmatrix}  $$   
and the rest is a straightforward calculation.
\end{proof}
The quantity $4 \pi \, \mathrm{Im} (\tau)$ is proportional to the area of the period lattice, and the function $\mm(\tau)$ plays an important role in the differential theory of modular forms.

Call a function $f$ on $\mathbb{H}$  modular of weights $(r,s)$ if it satisfies
    $$f(\gamma \, \tau)  = (c\tau+ d)^r (c \overline{\tau} +d)^s f(\tau)$$
    for all $\gamma = \left(\begin{smallmatrix}  a & b \\ c &  d \end{smallmatrix} \right) \in \mathrm{SL}_2(\ZZ)$. The entries of the single-valued period matrix proportional  to  $\lambda^r \overline{\lambda}^s$
        are  modular with weights $(-r,-s)$, where $(r,s)$ take the four possible  values $( \pm 1, \pm 1)$. These weights are the weights with respect to the splitting of the Hodge filtration defined by the de Rham basis associated to the forms \eqref{EllipticdRbasis}.
    This observation  is the starting point for a theory of real analytic modular forms with two weights  obtained by replacing the cohomology of $\mathcal{E}$ with its unipotent fundamental group (see \cite{brownCNHMF3} and prequels).

    \begin{remark}  The four single-valued periods are given by the  integrals:
    $$\frac{1}{2\pi i} \int_{E(\CC)} \nu \wedge \overline{\omega}  \qquad   \hbox{  where }\ \omega  , \nu  \  \in  \   \left\{ \frac{dx}{y} \ , \   x \frac{dx}{y} \right\} \ , $$
    which are regularised according to Remark \ref{rem: higher order poles}, since $x \frac{dx}{y}$ has a double pole. A simple computation confirms that the convergent  single-valued period 
    $$ \frac{1}{2\pi i} \int_{E(\CC)} \frac{dx}{y} \wedge \overline{\frac{dx}{y}}  = - \frac{|\omega_1|^2}{\pi} \, \mathrm{Im} (\tau)= - 4 \pi \, \lambda \overline{\lambda}  \, \mathrm{Im} (\tau)$$
    is proportional to the area of the period lattice $\ZZ \omega_1 +  \ZZ \omega_2$ of $E$.
  \end{remark}
  
 \begin{remark}
 Note that the determinant of the single-valued period matrix is  always equal to $-1$, as expected, since this is the single-valued period of $\bigwedge^2 H^1(E)\cong \QQ(-1)$. However, the trace of the single-valued period matrix \eqref{EllipticSVmatrix}  is not always zero. The reason for this is that we have computed the single-valued period of a (universal) family of elliptic curves -- the trace of the single-valued matrix is only guaranteed to vanish when the fibers correspond to an elliptic curve which is defined over the real numbers (the matrix defined in \eqref{Pssigmaformula} does not have vanishing trace in general). 
 \end{remark}
    
    \subsection{Logarithms} \label{par:example log}
    
    		Let $k$ be a subfield of $\CC$ and  $a\in k^\times\min \{1\}$. A path $\gamma$ from $1$ to $a$ in $\CC^\times$ defines a determination of the logarithm of $a$:
    		$$\log(a)=\int_{\gamma}\omega\ ,\qquad \hbox{ where }  \omega=\frac{dz}{z}\ .$$
    		The relevant cohomology group is
    		$H=H^1(\PP^1\min \{0,\infty\},\{1,a\})$. 
    		The $\mathcal{H}(k)$-period corresponding to $\log(a)$ is then defined to be:
    		$$\log^\mm(a) = [H,[\gamma],[\omega]]^\mm\ .$$ 
    		By Theorem \ref{coro:c0vee} and Example \ref{ex:dR projection P1}, its image under the de Rham projection is
    		$$\log^{\mm,\dR}(a) = [H,[\nu(1)],[\omega]]^{\mm,\dR} \qquad \hbox{ where } \qquad \nu=d\log\left(\frac{z-a}{z-1}\right)\ .$$
    		By Theorem \ref{thm:sv recipe}  and Remark \ref{rem:sv recipe number field} its single-valued period is 
    		$$\langle[\nu(1)],\s[\omega]\rangle =\frac{1}{2\pi i}\int_{\PP^1(\CC)}d\log\left(\frac{z-a}{z-1}\right)\wedge\frac{d\overline{z}}{\overline{z}}\ .$$
    		
    		The integrand is smooth on $\PP^1(\CC)$ except around the points $0$, $\infty$, $1$, $a$. For some sufficiently small  $\eps>0$ we let $P_\eps$ denote the complement of the union of the open disks of radius $\eps$ around those four points inside $\PP^1(\CC)$, and set
    		\begin{align*}
    		I_\eps  &=  \frac{1}{2\pi i}\int_{P_\eps} d\log\left(\frac{z-a}{z-1}\right)\wedge\frac{d\overline{z}}{\overline{z}}\\
    		& =  \frac{1}{2\pi i}\int_{P_\eps}d\log\left(\frac{z-a}{z-1}\right)\wedge\left(\frac{dz}{z}+\frac{d\overline{z}}{\overline{z}}\right)\\
    		& =  -\frac{1}{2\pi i}\int_{P_\eps}d\left(\log|z|^2 \,d\log\left(\frac{z-a}{z-1}\right)\right) \\
    		& = \frac{1}{2\pi i}\left(\int_{\partial D_0(\eps)}+\int_{\partial D_\infty(\eps)}+\int_{\partial D_1(\eps)} +\int_{\partial D_a(\eps)}\right) \log|z|^2 \,d\log\left(\frac{z-a}{z-1}\right)
    		\end{align*}

		The last equality follows from Stokes' theorem, where  $\partial D_i(\eps)$ denotes the positively oriented circle of radius $\eps$ around $i$.	The contributions at $0$ and $\infty$ vanish. The contributions at $a$ and $1$ equal
		$$\frac{1}{2\pi i} \int_{\partial D_a(\eps)} \log |z|^2 \frac{dz}{z-a} =\log|a|^2 + O(\varepsilon)$$
		and $\log|1|^2=0$, respectively, and so $I_\varepsilon = \log|a|^2 + O(\varepsilon)$. Letting $\varepsilon$ go to zero we deduce the  formula predicted by Remark \ref{introExlog}:
		$$\frac{1}{2\pi i}\int_{\PP^1(\CC)}d\log\left(\frac{z-a}{z-1}\right)\wedge\frac{d\overline{z}}{\overline{z}}= \log|a|^2\ . $$

\subsection{Green's functions, and N\'{e}ron--Tate heights for curves} \label{par: Greens}  The following discussion is a  variant of classical results \cite{grossheights,Lang}.  Our presentation is close  in spirit to the interpretation of heights due to Bloch, Beilinson, and Scholl  \cite{blochheight, BeilinsonHeights, Scholl} although our exact formulation does not seem to be in the literature.  The height is usually expressed  as an integral of a one-form with complex coefficients. In order to make the connection with the theory of periods, we must insist upon algebraic forms, and we find that the height is a quotient of  two (determinants of) single-valued periods. The computations of \cite[\S 3.5]{kontsevichzagier} suggests that a similar statement in fact holds for the full regulator on an elliptic curve over $\QQ$, and can also be modified to incorporate contributions from the local height pairings at finite primes.

Let $X$ be a smooth projective curve of genus $g$ over a number  field $k$, for simplicity (this assumption plays no role for defining local archimedean heights).  Let  
$\mathrm{Div}^0(X)$  denote the vector space of divisors of degree zero on $X$ which are defined over $k$ and  have coefficients in $\QQ$. It consists of degree zero Galois-equivariant formal linear combinations of points of $X$ defined over a finite extension of $k$.

\begin{lemma}\label{lem:existence log forms}  For any divisor $D \in \mathrm{Div}^0(X)$ of degree zero with  support
contained in  a   subscheme  $P \subset X$ of dimension $0$ defined over $k$, there exists a logarithmic differential form 
$$\nu_{D} \quad \in \quad \Gamma(X, \Omega_{X/k}^1(\log P)) $$
with the property that $\mathrm{Res}\, \nu_{D}  = D$. 
It is unique up to addition of a global regular form in  $\Gamma(X, \Omega^1_{X/k}) \cong F^1 H^1_\dR(X)$, which has dimension $g$.
\end{lemma}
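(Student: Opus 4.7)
The plan is to deduce the lemma from the residue short exact sequence of sheaves on $X$:
$$0 \To \Omega^1_{X/k} \To \Omega^1_{X/k}(\log P) \overset{\mathrm{Res}}{\To} \bigoplus_{i} (i_{P_i})_* k(P_i) \To 0 \, ,$$
where $P = \bigsqcup_i P_i$ is the decomposition of $P$ into closed points of $X$, each with residue field $k(P_i)$, and $i_{P_i}:P_i\hookrightarrow X$ is the closed immersion. Taking global sections produces the long exact sequence
$$0 \To H^0(X,\Omega^1_{X/k}) \To \Gamma(X,\Omega^1_{X/k}(\log P)) \overset{\mathrm{Res}}{\To} \bigoplus_i k(P_i) \overset{\partial}{\To} H^1(X,\Omega^1_{X/k}) \, .$$
The uniqueness statement then follows immediately: the kernel of $\mathrm{Res}$ is $H^0(X,\Omega^1_{X/k}) = F^1 H^1_\dR(X)$, which is $g$-dimensional.

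For existence, I would identify the connecting map $\partial$. The trace isomorphism $H^1(X,\Omega^1_{X/k}) \simeq k$ (Serre duality, or equivalently the fact that $X$ has a fundamental class) fits into a commutative diagram in which $\partial$ becomes the sum-of-traces map
$$\bigoplus_i k(P_i) \xrightarrow{\;\sum_i \mathrm{Tr}_{k(P_i)/k}\;} k \, .$$
This is the algebraic incarnation of the classical residue theorem: the sum of residues of a meromorphic one-form on a compact Riemann surface vanishes.

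Next, I would match the divisor $D$ with an element of $\bigoplus_i k(P_i)$. Writing $D = \sum_i n_i (P_i)$ with $n_i \in \QQ$ (Galois-equivariance forces the coefficient to be constant along each closed point, and $\QQ$-valued by hypothesis), the inclusions $\QQ \hookrightarrow k \hookrightarrow k(P_i)$ make the $n_i$ into elements of the $k(P_i)$. The degree of $D$ is then
$$\deg(D) \;=\; \sum_i [k(P_i):k]\, n_i \;=\; \sum_i \mathrm{Tr}_{k(P_i)/k}(n_i) \, ,$$
so the hypothesis $\deg(D)=0$ is exactly the statement that $D$ lies in $\ker(\partial)$. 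Exactness of the above sequence then produces the desired $\nu_D \in \Gamma(X,\Omega^1_{X/k}(\log P))$ with $\mathrm{Res}(\nu_D)=D$.

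The only genuinely delicate step is the identification of $\partial$ with the sum of traces; I expect this to be the main obstacle, though it is standard and can be done either by dualising via Grothendieck--Serre duality, or by reducing to $\bar{k}$-points and appealing to the classical residue theorem applied to a $k$-rational meromorphic lift constructed locally.
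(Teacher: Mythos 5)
Your proof is correct, but it takes a genuinely different route from the paper's. The paper first passes to a finite Galois splitting field $K$ over which $D$ is a combination of $K$-points, invokes the identification $\Gamma(X,\Omega^1_{X/K}(\log P))\cong F^1H^1_{\dR}(X\setminus P)$ (Proposition \ref{prop:global log Hodge}), applies the exact functor $F^1$ to the Gysin sequence of mixed Hodge structures to exhibit degree-zero divisors as the image of the residue map, and finally Galois-averages the resulting form to descend it to $k$. You instead stay over $k$ throughout and argue with the coherent residue exact sequence $0\to\Omega^1_{X/k}\to\Omega^1_{X/k}(\log P)\to\bigoplus_i(i_{P_i})_*k(P_i)\to 0$, identifying the connecting map with the sum of traces via Serre duality. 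Both are valid: yours is more elementary, avoids Hodge theory and the Galois-descent step entirely (the residue sequence is already defined over $k$), and makes the role of $\deg D=0$ completely transparent via $\deg D=\sum_i\mathrm{Tr}_{k(P_i)/k}(n_i)$; the paper's argument is uniform with the Hodge-theoretic machinery used everywhere else in the section and with the interpretation of $\Gamma(X,\Omega^1(\log P))$ as $F^1$ of a relative cohomology group. As for the step you flag as delicate, note that you can sidestep the precise identification of $\partial$: the algebraic residue theorem gives $\mathrm{image}(\mathrm{Res})\subseteq\ker(\sum_i\mathrm{Tr}_{k(P_i)/k})$, and Riemann--Roch gives $\dim_k\mathrm{image}(\mathrm{Res})=h^0(\Omega^1_X(P))-g=\deg P-1$, which equals the dimension of that kernel, so the two subspaces coincide and existence follows.
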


\begin{proof} The divisor $D$ is given by a rational linear combination of points in $X(K)$ for some finite extension $K$ of $k$. We can assume that it is Galois with $G= \mathrm{Gal}(K/k)$. Since $D$ is $G$-invariant,  the support $P$ of $D$ is indeed defined over $k$. 

We work over the field $K$, and denote  $X\times_k K$, $P\times_k K$ and so on by $X, P$ for simplicity. All vector spaces below are over $K$.  Proposition \ref{prop:global log Hodge} implies that 
$$\Gamma(X, \Omega_{X/K}^1(\log P)) \cong F^1 H^1_\dR(X\min P) \qquad \hbox{ and } \qquad \Gamma(X, \Omega_{X/K}^1) \cong F^1 H^1_\dR(X) \ .$$
To the Gysin (residue) sequence
$$    0 \To   H^1(X) \To H^1(X \backslash P) \overset{\mathrm{Res}}{\To} H^0_\dR(P)(-1) \To  H^2_\dR(X) \ $$
one can apply the exact functor $F^1$  to deduce that there is a short exact sequence
$$  0 \To   F^1 H^1_\dR(X) \To F^1 H^1_\dR(X \backslash P) \overset{\mathrm{Res}}{\To} \mathrm{ker} \left( H^0_\dR(P)(-1) \To  H^2_\dR(X)\right)\To 0  \ .$$
Interpreting $\mathrm{ker} \left( H^0_\dR(P)(-1) \To  H^2_\dR(X)\right)$ as the $K$-vector space of divisors supported on $P$ of degree $0$, we deduce that there exists $\nu \in \Gamma(X, \Omega_{X/K}^1(\log P))$
such that  $\mathrm{Res} \, \nu =D$.  Now define $\nu_D = |G|^{-1} \sum_{g\in G} g(\nu)$. It is $G$-invariant, and hence defined over $k$. Since $D$ is also $G$-invariant, it satisfies $\mathrm{Res} \, \nu_D =D$. 
\end{proof}
It follows from the lemma that, given any choice of $k$-basis for the regular differentials $\omega_1,\ldots, \omega_g$ of $\Gamma(X,\Omega^1_{X/k})$, the element
$$\nu_D \wedge \omega_1 \wedge  \cdots \wedge \omega_g \quad \in \quad \bigwedge^{g+1} F^1 H^1_\dR(X \backslash P)$$
is well-defined up to scalar multiple in $k^{\times}$. Now consider two  divisors of degree zero
$$ D , E \quad \in  \quad \mathrm{Div}^0(X)$$
  with  disjoint supports $A ,B \subset X$  respectively. 
Consider the  object  in $\mathcal{H}(k)$ defined by ($M_{\dR}, \left(M_{\B, \sigma}\right)_{\sigma}$, $\left(\mathrm{comp}_{\sigma,\dR}\right)_{\sigma})$ where 
$$M=  H^1(X \backslash A, B)\ .$$
By Proposition \ref{prop:grW 0 and 2n},  $F^1 M_{\dR} = F^1  H^1(X \backslash A)$ and therefore 
$$\Gamma(X, \Omega^1_{X/k}(\log A)) \overset{\sim}{\To}      F^1 M_{\dR}\ .$$
 Now since $\mathcal{H}(k)$ is an abelian tensor category, the objects $M^{\otimes m}$, $\bigwedge^m M$, and so on, are all objects of $\mathcal{H}(k)$.
The dual of $M$ is the object $M^{\vee} = H^1(X \backslash B, A)(1)$. 
In particular there is an isomorphism 
$$\Gamma(X, \Omega^1_{X/k}(\log B)) \overset{\sim}{\To}  F^0 M^{\vee}_{\dR}$$
which we denote by $\omega \mapsto \omega(1)$.  
Denote the $m$th tensor power of this map by $x\mapsto x(m):   \Gamma(X, \Omega^1_{X/k}(\log(B)))^{\otimes m} \To  (M^{\vee}_{\dR})^{\otimes m} .$ For any basis $\omega_1,\ldots, \omega_g$ of $\Gamma(X,\Omega^1_{X/k})$ write 
$$\omega_{\mathrm{vol}}
= \omega_1 \wedge \cdots \wedge \omega_g \quad \in \quad  \bigwedge^{g} M_{\dR}.$$ 
Denote the corresponding element in the dual space  $\bigwedge^{g} M^{\vee}_{\dR}$ by $\omega_{\vol}(g)$. For some other basis $\eta_1,\ldots,\eta_g$ of $\Gamma(X,\Omega^1_{X/k})$ (possibly the same) denote  the corresponding elements by $\eta_\vol$ and $\eta_\vol(g)$.

\begin{definition} \label{definitionDRheightpairing} Given two such forms $\omega_{\vol}$ and $\eta_{\vol}$  define the  \emph{de Rham  height pairing} for  any pair of divisors $D,E$ with disjoint supports by 
$$\langle D, E \rangle_{\omega, \eta}^{\mm,\dR} = \left[\bigwedge^{g+1} M \  , \   (\nu_{D} \wedge \omega_{\vol}) (g+1)  \ ,  \   \nu_{E} \wedge \eta_{\vol}  \right]^{\mm,\dR} \quad \in \quad \Pe^{\mm,\dR}_{\mathcal{H}(k)}\ .  $$
It is well-defined. It only depends on the choice of  $\omega_{\vol}$, $\eta_\vol$, and not the choice of representatives $\nu_{D}, \nu_E$, nor $A,B$.  Define also
$$\langle\omega_{\vol},\eta_{\vol}\rangle^{\mm,\dR} =  \left[\bigwedge^{g} H^1(X) \  , \     \omega_{\vol} (g) \  ,  \   
\eta_{\vol}  \right]^{\mm,\dR} \quad \in \quad \Pe^{\mm,\dR}_{\mathcal{H}(k)}\ .  $$
Note that the de Rham height pairing defined above will correspond to the usual N\'eron--Tate height pairing multiplied by the factor $\langle\omega_{\vol},\eta_{\vol}\rangle^{\mm,\dR}$. 
\end{definition}

\begin{remark} \label{remarkExpandHeight} 
By expanding the determinant, we have 
\begin{equation} \label{dRheightexpand} \langle D, E \rangle_{\omega, \eta}^{\mm,\dR} = \left[ M , \nu_{D}(1), \nu_{E} \right]^{\mm,\dR}  \cdot \langle \omega_{\vol} ,\eta_{\vol}\rangle^{\mm,\dR} + \xi 
\end{equation}
where  $\xi$ is the signed sum  of products of $(g+1)$ terms:
\begin{equation}  \label{dRheightjunk} 
\pm \, \left[M, \nu_{D}(1), \eta_{j_1} \right]^{\mm,\dR} \cdot   \left[M, \omega_{i_1}(1),  \nu_{E} \right]^{\mm,\dR}  \cdot \, \prod_{r=2}^g  \left[M, \omega_{i_r}(1), \eta_{j_r} \right]^{\mm,\dR}\end{equation}
where $\{i_1,\ldots, i_g\} = \{j_1,\ldots, j_g\} = \{1,\ldots, g\}. $
\end{remark}

\begin{proposition} \label{lemdRHeightproperties} The de Rham height pairing has the following properties: 
\begin{enumerate}
    \item It is  bilinear in $D, E.$
    \item It is symmetric: 
    $ \langle D, E \rangle_{\omega, \eta}^{\mm,\dR} = \langle E, D \rangle_{\eta, \omega}^{\mm,\dR}\ .$
\item Let $f: X \rightarrow \PP^1$ be a rational function on $X$ and let  $D= \sum_{j} n_j x_j$ be a degree zero divisor with support  disjoint from $f^{-1}(\{0,\infty\})$. In this case,  
$$\langle D, \mathrm{div}(f)\rangle_{\omega, \eta}^{\mm,\dR} =    \langle \omega_{\vol}, \eta_{\vol}\rangle^{\mm,\dR} \left(  \sum_j  n_j \log^{\mm,\dR} (f(x_j) ) \right)   \ .  $$
    \end{enumerate}
\end{proposition}

\begin{proof}
 For $(1)$, the class of $\nu_{D} \in F^1 M_{\dR}/ F^1 H_{\dR}^1(X)$ is  uniquely defined and linear in $D$. Therefore $\nu_D \wedge \omega_{\vol}$ is well-defined and depends linearly on $D$. 
  
  Statement $(2)$ follows from Poincar\'e--Verdier duality:
$$\bigwedge^{g+1}  H^1(X\backslash A, B)^{\vee}  = \bigwedge^{g+1}\left( H^1(X\backslash B, A)(1)\right)   $$
which induces the required equivalence of de Rham periods.

 For $(3)$,  let $B$ denote the support of $D$, which is disjoint from $A=f^{-1}(\{0,\infty\})$.  Thus there is a 
morphism of pairs
$$f: (X \backslash A ,B) \To  (\PP^1\min\{0,\infty\}, f(B)) $$
which gives rise to a morphism $f^*:  H^1(\PP^1\min\{0,\infty\}, f(B)) \rightarrow M$ in the category $\mathcal{H}(k)$.
Since $d\log f =f_\dR^* (d\log z)$, where $z$ is the coordinate on $\PP^1$,  we deduce an equivalence of $\mathcal{H}(k)$-periods
$$  [M,  \omega(1), d\log f]^{\mm,\dR}= [ H^1(\PP^1\min\{0,\infty\}, f(B)), (f_{\dR}^*)^{\vee} \omega(1), d \log z ]^{\mm,\dR}$$
for any $\omega \in \Gamma(X,\Omega^1_X(\log B))$. 
Now suppose that $\omega \in \Gamma(X,\Omega^1_X)$ is regular. Then its class lies in $F^1H^1_\dR(X)\simeq F^1H^1_\dR(X,A)$, and its image under $(f_\dR^*)^\vee$ lies in $F^1H^1_\dR(\PP^1)=0$. We thus have $[M,  \omega(1), d\log f]^{\mm,\dR}=0$ 
and every term in $\xi$ in the expansion \eqref{dRheightexpand} vanishes, by \eqref{dRheightjunk},  leaving only 
$$\langle D,  d\log f \rangle_{\omega, \eta}^{\mm,\dR} =   \langle \omega_{\vol} , \eta_{\vol} \rangle^{\mm,\dR}\,[H^1(\PP^1\min\{0,\infty\},f(B)),(f_\dR^*)^\vee\nu_D(1),d\log z]^{\mm,\dR}\ .$$
It remains to compute the second term on the right. 
The commutative square
$$\xymatrix{
 F^1H^1_\dR(X\min B)\ar[r]^-{\mathrm{Res}}\ar[d]_{(f_\dR^*)^\vee}& H^0_\dR(B)\ar[d] \\
  F^1H^1_\dR(\PP^1\min f(B)) \ar[r]_-{\mathrm{Res}}& H^0_\dR(f(B))
}$$
implies  that $(f^*_{\mathrm{dR}})^\vee\nu_D$ and $\sum_jn_j\,d\log(z-f(x_j))$ have the same residue along $f(B)$, hence they are equal since there are no global regular forms on $\PP^1$. One then recognizes the de Rham logarithms from \S\ref{par:example log} and the claim follows.
\end{proof}

\subsubsection{Archimedean height}
Let us now fix an  embedding $\sigma: k \hookrightarrow \CC$ and let $M$ be as above. Without mention to the contrary, $X(\CC)$ denotes $X_{\sigma}(\CC)$ and for any form $\omega \in \Gamma(X, \Omega^1_{X/k})$ we denote its image under $\sigma$ simply by $\omega$.

The  single-valued period induces an isomorphism
$$\s_{\sigma}:  \left(\bigwedge^{r} M_{\dR} \right) \otimes_{k, \sigma} \CC \overset{\sim}{\To} \left( \bigwedge^{r} M_{\dR} \right)\otimes_{k, \overline{\sigma}} \CC\ ,  $$
for all $r\geq 0$, and similarly with $M$ replaced by $H^1(X)$.  If $r=g$, the single-valued period
$ \langle \omega_{\vol}(g)   ,   \s_{\sigma} \eta_{\vol} \rangle$
is non-zero, since it is proportional to 
$\langle \omega_{\vol}(g)   ,   \s_{\sigma} \omega_{\vol} \rangle  < 0$.  
This  follows from the  Hodge--Riemann bilinear relations: the  pairing 
\begin{equation} \label{RiemannPairing} 
\nu \otimes \omega \quad  \mapsto \quad   \langle \nu(1), \s_{\sigma}  \omega \rangle  =    -\frac{1}{2\pi i} \int_{X(\CC)}  \overline{\nu} \wedge \omega
\end{equation}
restricted to the space of  holomorphic differential forms $\Gamma(X, \Omega^1_{X(\CC)})$ is a  negative  definite Hermitian form, and $\langle \omega_{\vol}(g)   ,   \s_{\sigma} \omega_{\vol} \rangle $ is  its determinant.  

\begin{definition} Define the \emph{normalised height pairing}  by 
$$\langle D, E \rangle^{\sigma} =    \frac{   \langle \nu_D \wedge \omega_{\vol}(g+1), \s_{\sigma} (\nu_E \wedge \eta_{\vol} )\rangle }{   \langle \omega_{\vol}(g)   ,   \s_{\sigma} \eta_{\vol} \rangle  }   \qquad \in  \; \CC \ .$$
It is evidently well-defined (independent of the choice of $\omega_\vol$ and $\eta_\vol$ and  the choices of representatives for $\nu_{D}, \nu_{E}$). 
Note the slight difference in notation compared to Definition \ref{definitionDRheightpairing}, which is `motivic'  but not canonical, since it depends on the choice of $\omega_{\vol}, \eta_{\vol}$. By contrast, the normalised height pairing  $\langle D, E \rangle^{\sigma}$ is canonical, but is not a period, rather a quotient of two single-valued  periods. 
\end{definition}

This follows from Proposition \ref{lemdRHeightproperties} that $\langle D, E \rangle^{\sigma}$ is bilinear, symmetric, and satisfies 
$$\langle D, \mathrm{div}(f)\rangle^{\sigma} = \sum_j n_j  \log |\sigma(f(x_j))|^2 $$ 
whenever  $D = \sum_j n_j x_j$ has disjoint support  from $f^{-1}(\{0,\infty\}).$ To compute it, let $\omega_1,\ldots, \omega_g$ and $\eta_1,\ldots, \eta_g$ denote two bases for $\Gamma(X,\Omega^1_X)$, and denote by $P$ the $g\times g$ matrix with entries
$$(P)_{p,q}= \langle \omega_p, \s_{\sigma} \eta_q\rangle  =   -\frac{1}{ 2 \pi i}  
\int_{X(\CC)} \overline{\omega_p} \wedge \eta_q
\ .$$
It is a submatrix of a single-valued period matrix of $H^1(X)$, with determinant $ \det(P)=\langle \omega_{\vol}, \eta_{\vol}   \rangle^{\sigma} . $
\begin{proposition} \label{prop: DEsigmaformula} 
We have the  following formula for the normalised height pairing:
  $$\langle D, E\rangle^{\sigma} = -\frac{1}{2\pi i} \int_{X(\CC)} \overline{\nu_D} \wedge \nu_E + \sum_{p,q=1}^g \alpha_{p,q}  \int_{X(\CC)} \overline{\nu_D} \wedge \eta_p \int_{X(\CC)} \overline{\omega_q} \wedge \nu_E  $$
where  the constants $\alpha_{p,q}$ do not depend on $D,E$ and are 
$$\alpha_{p,q} =  \frac{(-1)^{p+q+1}}{(2\pi i)^2}  \frac{ \det (P(p,q))   }  {  \det(P) }  $$
where $P(p,q)$ denotes the matrix $P$ with row $p$ and column $q$ removed.
\end{proposition}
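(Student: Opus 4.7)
My plan is to interpret the numerator and denominator of
\begin{equation*}
\langle D, E\rangle^{\sigma} = \frac{\langle \nu_D\wedge\omega_\vol(g+1),\,\s_\sigma(\nu_E\wedge\eta_\vol)\rangle}{\langle\omega_\vol(g),\,\s_\sigma\eta_\vol\rangle}
\end{equation*}
as determinants of matrices of pairwise single-valued pairings, and then to extract the stated expression by a Schur-complement expansion of the numerator along the row and column adjacent to the $P$-block.

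First, since $\s_\sigma$ is an isomorphism of fiber functors it is compatible with the exterior power construction in $\mathcal{H}(k)$. Consequently, the pairing induced on $\bigwedge^{g+1}M$ is given by the determinantal formula
\begin{equation*}
\langle u_0\wedge\cdots\wedge u_g(g+1),\,\s_\sigma(v_0\wedge\cdots\wedge v_g)\rangle = \det\bigl(\langle u_i(1),\,\s_\sigma v_j\rangle\bigr)_{0\leq i,j\leq g}\ ,
\end{equation*}
with the analogous identity in the $g$-fold case needed for the denominator. Applied to the ordered tuples $(\nu_D,\omega_1,\ldots,\omega_g)$ and $(\nu_E,\eta_1,\ldots,\eta_g)$, the numerator becomes $\det A$, where $A$ is a $(g+1)\times(g+1)$ matrix whose lower-right $g\times g$ block is exactly $P$ and whose remaining entries (the top row and left column bordering $P$) are pairings of the same shape. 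Theorem \ref{thm:sv recipe} together with Remark \ref{rem:sv recipe number field} then evaluates every entry of $A$ as $-\tfrac{1}{2\pi i}$ times an integral of the form $\int_{X(\CC)}\overline{\alpha}\wedge\beta$; in particular the denominator equals $\det P$.

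The remaining step is purely algebraic. Successive cofactor expansion along the first row of $A$ and then along the first column of each resulting minor (equivalently, the Schur complement identity combined with the expression of $P^{-1}$ through the classical adjugate) yields
\begin{equation*}
\det A = A_{00}\det P - \sum_{i,j=1}^g (-1)^{i+j}\,A_{0,j}\,A_{i,0}\,\det P(i,j)\ .
\end{equation*}
Dividing through by $\det P$ and substituting the integral expressions for $A_{00}$, $A_{0,j}$, and $A_{i,0}$ produces a sum of the advertised shape: the constant term is precisely $-\tfrac{1}{2\pi i}\int_{X(\CC)}\overline{\nu_D}\wedge\nu_E$, and the coefficient of the double integral $\int\overline{\nu_D}\wedge\eta_p\cdot\int\overline{\omega_q}\wedge\nu_E$ is a signed ratio of a minor of $P$ to $\det P$, multiplied by $(2\pi i)^{-2}$.

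I expect the only genuine work to be bookkeeping: tracking the signs from the two nested Laplace expansions and reconciling the row/column indexing of $P$ with the labels of the $\eta_p$ and $\omega_q$ that appear in the integrals. Once this is aligned, the explicit form of $\alpha_{p,q}$ drops out immediately, and the bilinearity of both sides in $D$ and $E$ ensures that the identity holds for arbitrary divisors, not only those with one-point support.
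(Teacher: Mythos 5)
Your proposal is correct and follows essentially the same route as the paper: the paper's proof is precisely "expand the $(g+1)\times(g+1)$ determinant of pairwise pairings (Remark \ref{remarkExpandHeight}), evaluate each entry by the integral formula of Theorem \ref{thm:sv recipe} and Remark \ref{rem:sv recipe number field}, and divide by $\det(P)$," which is what your Schur-complement/cofactor expansion accomplishes. The only point to watch is the final indexing: tracking your own formula through, the minor attached to $\int\overline{\nu_D}\wedge\eta_p\int\overline{\omega_q}\wedge\nu_E$ comes out as the one with the $\omega_q$-row and $\eta_p$-column deleted, so make sure your convention for $P(p,q)$ agrees with the statement's.
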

\begin{proof}
It follows from Remark \ref{remarkExpandHeight}, Theorem \ref{thm:sv recipe} and Remark \ref{rem:sv recipe number field}. 
\end{proof}

\subsubsection{Computation} The  formula in Proposition \ref{prop: DEsigmaformula} for the normalised height pairing simplifies drastically if we allow  differential forms with complex coefficients. 
Let $D$ be a degree zero divisor on $X$ with support in $P\subset X$.

\begin{lemma} \label{lem: complexlogform123} 
There exists a unique logarithmic form over $\CC$
\begin{equation} \label{nuDreal} \nu^{\sigma}_D \quad \in \quad \Gamma(X, \Omega^1_X(\log P))\otimes_{k, \sigma}  \CC 
\end{equation} 
such that $\mathrm{Res}\,  \nu^{\sigma}_D=D$ and the following equivalent conditions are satisfied:
\begin{enumerate}
    \item The single-valued periods of  $H^1(X\backslash P)$  of the form 
$$ \langle  \omega(1)  , \s_{\sigma} \nu_D^{\sigma} \rangle $$
vanish for all  $\omega \in \Gamma(X,\Omega^1_{X/k})$.
    \item  For all $\omega \in \Gamma(X,\Omega^1_{X/k})$, we have $$\int_{X_{\sigma}(\CC)}  \overline{\omega}\wedge\nu^{\sigma}_D  =0 \ , $$
    i.e., $\nu_D^{\sigma}$ is orthogonal to all regular forms on $X$. 
        \item  For all closed cycles $\gamma \subset  (X \backslash P)_{\sigma}(\CC) $ 
        $$\mathrm{Re} \,  \int_{\gamma} \nu^{\sigma}_D =0 \ ,$$
        i.e., $\nu_D^{\sigma}$ has imaginary periods. 
\end{enumerate}
\end{lemma}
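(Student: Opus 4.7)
The plan is to establish existence and uniqueness of $\nu_D^\sigma$ via condition (2), and then to deduce the equivalence with (1) and (3). Extending Lemma \ref{lem:existence log forms} by $\CC$-linearity, forms $\nu \in \Gamma(X, \Omega^1_X(\log P))\otimes_{k,\sigma}\CC$ with $\mathrm{Res}\,\nu = D$ exist and form a torsor under $V := \Gamma(X, \Omega^1_{X/k})\otimes_{k,\sigma}\CC$, canonically identified with the $g$-dimensional complex vector space $H^{1,0}(X_\sigma(\CC))$. The equivalence of (1) and (2) is immediate from Theorem \ref{thm:sv recipe} and Remark \ref{rem:sv recipe number field}: extending their formula by $\CC$-linearity in the second slot (permitted because $\omega$ is $k$-algebraic), one obtains
$$\langle[\omega(1)], \s_\sigma\nu_D^\sigma\rangle = -\frac{1}{2\pi i}\int_{X_\sigma(\CC)}\overline{\omega}\wedge\nu_D^\sigma,$$
so the two vanishing conditions coincide.

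For existence and uniqueness under (2), I would invoke the Hodge--Riemann bilinear relations. The restriction to $V\times V$ of the pairing $(\omega, \omega') \mapsto -(2\pi i)^{-1}\int_{X_\sigma(\CC)}\overline{\omega}\wedge\omega'$ is the classical negative-definite Hermitian form on $H^{1,0}(X_\sigma(\CC))$, and in particular is nondegenerate. Hence, for any initial form $\nu$ with residue $D$, the linear functional $\omega\mapsto\int_{X_\sigma(\CC)}\overline{\omega}\wedge\nu$ on $V$ is represented by a unique $\omega'\in V$, and $\nu_D^\sigma := \nu - \omega'$ is the unique form satisfying (2).

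For (3)$\Leftrightarrow$(2), I would prove (3)$\Rightarrow$(2) by constructing a Green's function and integrating by parts, and deduce the converse by uniqueness. Since the residues $n_p$ of $D$ are rational, $\int_{\gamma_p}(\nu_D^\sigma + \overline{\nu_D^\sigma}) = 2\pi i n_p - 2\pi i n_p = 0$ around each small loop $\gamma_p$ at $p\in P$, so the real closed smooth $1$-form $\nu_D^\sigma + \overline{\nu_D^\sigma}$ on $X_\sigma(\CC)\backslash P$ extends, after subtracting $d(\sum_p n_p\chi_p\log|z-p|^2)$ for a partition of unity $\{\chi_p\}$, to a closed smooth $1$-form on $X_\sigma(\CC)$, and thus defines a class in $H^1(X_\sigma(\CC),\RR)$. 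Condition (3) forces this class to vanish, so $\nu_D^\sigma + \overline{\nu_D^\sigma} = dG_D$ for a smooth function $G_D$ on $X_\sigma(\CC)\backslash P$ with $G_D \sim n_p\log|z-p|^2$ near each $p$. For $\omega\in V$, $d\bar\omega = 0$ and $\bar\omega\wedge\overline{\nu_D^\sigma} = 0$ by type, whence $d(\bar\omega G_D) = \bar\omega\wedge\nu_D^\sigma$. Applying Stokes on the complement of small $\eps$-disks around the points of $P$, the boundary term $\sum_p\int_{\partial B_\eps(p)}\bar\omega G_D$ is $O(\eps\log\eps)$ and vanishes as $\eps\to 0$, yielding (2). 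Uniqueness under (3) follows from the Hodge-theoretic fact that $\omega'\mapsto[\omega'+\overline{\omega'}]$ is an $\RR$-linear isomorphism $V\to H^1(X_\sigma(\CC),\RR)$; combined with (3)$\Rightarrow$(2), the unique solution to (3) coincides with the unique solution to (2), proving their equivalence.

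The main obstacle is the Stokes computation in the presence of the logarithmic singularities of $G_D$: one must verify both that $\bar\omega\wedge\nu_D^\sigma$ is absolutely integrable and that the boundary contribution $\bar\omega G_D$ on $\partial B_\eps(p)$ decays as $O(\eps\log\eps)$; these estimates, together with the fact that the real periods of $\nu_D^\sigma$ around each $\gamma_p$ automatically vanish (so condition (3) really is a condition on $H^1(X_\sigma(\CC),\RR)$ rather than on $H^1(X_\sigma(\CC)\backslash P,\RR)$), are what make the argument work.
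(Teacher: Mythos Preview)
Your argument is correct. The equivalence $(1)\Leftrightarrow(2)$ and the existence/uniqueness under $(2)$ via Hodge--Riemann are exactly as in the paper. The difference lies in how you link $(3)$ to the other conditions.

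You prove $(3)\Rightarrow(2)$ analytically: you observe that under $(3)$ the real form $\nu_D^\sigma+\overline{\nu_D^\sigma}$ is exact on $X_\sigma(\CC)\setminus P$, write it as $dG_D$ for a Green's function with logarithmic singularities, and then integrate by parts to get $\int_{X_\sigma(\CC)}\overline{\omega}\wedge\nu_D^\sigma=0$, estimating the boundary contribution by $O(\eps\log\eps)$. The paper instead proves $(3)\Rightarrow(1)$ by a period-matrix argument: extending a basis of $H^1_\dR(X)$ by $\nu_D^\sigma$, the last column of the period matrix $A_\sigma$ of $H^1(X\setminus P)$ is purely imaginary under $(3)$, so row operations over $\RR$ can make $A_\sigma$ block-diagonal without altering $\overline{A_\sigma}^{-1}A_\sigma$, which forces the relevant off-diagonal single-valued periods to vanish. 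Your Stokes computation is elementary and self-contained (and indeed the paper performs essentially the same calculation later, when relating the height pairing to the classical Green's function); the paper's matrix argument avoids the local estimates at $P$ and makes the role of the single-valued period matrix more transparent. Both routes then close the circle via the separate uniqueness statements for $(2)$ and $(3)$.
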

 
\begin{proof} Note that conditions $(1)$ and $(2)$ are equivalent by Theorem \ref{thm:sv recipe general} and Remark \ref{rem:sv recipe number field}. 
 The existence and uniqueness in $(2)$ follow from the non-degeneracy of the pairing \eqref{RiemannPairing}. More precisely, let $\mu_D\in\Gamma(X,\Omega^1_{X/k}(\log P))$ be any logarithmic form such that $\mathrm{Res}_D(\mu_D)=D$ as in Lemma \ref{lem:existence log forms}. There exists a unique regular form $\xi^\sigma\in\Gamma(X_\sigma,\Omega^1_{X_\sigma})$ such that
 $$\int_{X_\sigma(\CC)}\overline{\omega}\wedge\mu_D = \int_{X_\sigma(\CC)}\overline{\omega}\wedge\xi^\sigma$$
 for every $\omega\in\Gamma(X_\sigma,\Omega^1_{X_\sigma})$. We can then set $\nu_D^\sigma=\mu_D-\xi^\sigma$. 

Finally, the existence and uniqueness of a form satisfying $(3)$ is classical. The condition in $(3)$ is satisfied if $\gamma$ is a small loop enclosing a point of $P$ and it is enough to check it for classes of closed cycles $\gamma$ in the image of a chosen splitting $H_1^{\B,\sigma}(X)\rightarrow H_1^{\B,\sigma}(X\min P)$. The pairing $H_1^{\B,\sigma}(X;\RR)\otimes \Gamma(X_\sigma,\Omega^1_{X_\sigma})\rightarrow \RR$ given by $\gamma\otimes\omega\mapsto \mathrm{Re}\int_\gamma\omega$ is a perfect pairing of real vector spaces by Hodge symmetry. Thus, for some logarithmic form $\mu_D$ as before, there exists a unique regular form $\xi^\sigma\in\Gamma(X_\sigma,\Omega^1_{X_\sigma})$ such that
$$\mathrm{Re}\int_\gamma\mu_D = \mathrm{Re}\int_\gamma\xi^\sigma$$
for every class $\gamma\in H_1^{\B,\sigma}(X;\RR)$. We can then set $\nu_D^\sigma=\mu_D-\xi^\sigma$. To conclude, it is enough to prove that condition (3) implies condition (1).
For simplicity we may assume that $D$ is of the form $(p)-(q)$.
A basis of $H^1_\dR(X\backslash \{p,q\})$ is obtained by adding to a basis of $H^1_\dR(X)$ the class of $\nu^\sigma_D$ satisfying condition (3). We choose a basis of $H_1^\B(X\backslash P)$ whose last vector is the class $\delta$ of a small positive loop around $\sigma(p)$. In those bases, the period matrix $A_\sigma$ of $H^1(X\backslash P)$ has the following shape:

$$\left(\begin{array} {ccc|c} &&& \\ & M_\sigma &&  ? \\  &&&\\ \hline & 0 & &2\pi i
\end{array}\right)
$$
where $M_\sigma$ is a period matrix for $H^1(X)$ and the last column consists entirely of imaginary numbers by assumption. Thus, by performing row operations with real coefficients, one can always assume that $A_\sigma$ is block-diagonal, i.e., $?=0$ in the above matrix. These operations do not change the single-valued period matrix $S_\sigma=\overline{A_\sigma}^{-1}A_\sigma$ which is thus block-diagonal. This implies condition (1).
\end{proof}

\begin{corollary}\label{coro: formula height pairing holomorphic}
Let $D, E$ be as above.   The normalised height pairing is given by the following integral of complex logarithmic differential forms  \eqref{nuDreal}:
\begin{equation} \label{heightassvintegral} \langle D, E \rangle^{\sigma} = -\frac{1}{2\pi i} \int_{X_{\sigma}(\CC)} \overline{\nu^{\sigma}_{D}} \wedge \nu^{\sigma}_{E}\ . 
\end{equation}
\end{corollary}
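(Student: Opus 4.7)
The plan is to deduce the identity by combining Proposition \ref{prop: DEsigmaformula} with the orthogonality property (2) of Lemma \ref{lem: complexlogform123}, after replacing the $k$-rational logarithmic representatives $\nu_D, \nu_E$ by their complex refinements $\nu_D^\sigma, \nu_E^\sigma$.

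First I would note that $\langle D, E\rangle^\sigma$ depends on $\nu_D$ only through the wedge product $\nu_D \wedge \omega_\vol \in \bigwedge^{g+1} F^1 M_\dR\otimes_{k,\sigma}\CC$. Since $\omega_\vol$ already exhausts a basis of the $g$-dimensional space $\Gamma(X,\Omega^1_{X/k})$, wedging it against any (complex-valued) regular form produces zero, so the canonical pairing is unchanged when $\nu_D$ is modified by an element of $\Gamma(X,\Omega^1_{X/k})\otimes_{k,\sigma}\CC$; the analogous statement holds for $\nu_E$. By the construction in Lemma \ref{lem: complexlogform123}, the forms $\nu_D^\sigma$ and $\nu_E^\sigma$ differ from $\nu_D$ and $\nu_E$ by precisely such regular correctors, so substituting them into the formula does not alter the value of the canonical pairing.

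Because the right-hand side of the formula in Proposition \ref{prop: DEsigmaformula} was derived from the $\CC$-bilinear integration pairing of Theorem \ref{thm:sv recipe} together with Remark \ref{rem:sv recipe number field}, the formula continues to hold verbatim after extending scalars to $\CC$ and replacing $\nu_D, \nu_E$ by $\nu_D^\sigma, \nu_E^\sigma$. I would then invoke condition (2) of Lemma \ref{lem: complexlogform123} applied to $\nu_E^\sigma$: for each basis element $\omega_q \in \Gamma(X, \Omega^1_{X/k})$ it gives $\int_{X_\sigma(\CC)}\overline{\omega_q}\wedge \nu_E^\sigma = 0$, which forces every summand of the double sum in the proposition to vanish. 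What remains is precisely the asserted identity $\langle D, E\rangle^\sigma = -\frac{1}{2\pi i}\int_{X_\sigma(\CC)}\overline{\nu_D^\sigma}\wedge \nu_E^\sigma$.

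The only delicate point in this argument will be the scalar-extension step: one must verify that the expression in Proposition \ref{prop: DEsigmaformula} is $\CC$-bilinear in the representatives $\nu_D, \nu_E$ so that it may be evaluated on complex-coefficient forms. This is immediate from the bilinearity of the single-valued integration pairing on de Rham cohomology, so no genuine obstruction arises; the substantive content of the corollary lies entirely in the orthogonality condition characterising the canonical complex logarithmic forms $\nu_D^\sigma$.
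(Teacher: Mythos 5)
Your proposal is correct and follows essentially the same route as the paper: substitute $\nu_D^\sigma,\nu_E^\sigma$ into Proposition \ref{prop: DEsigmaformula} (the paper justifies this by invariance of determinants under row and column operations, which is the same observation as your remark that wedging a regular corrector against $\omega_{\vol}$ or $\eta_{\vol}$ gives zero), and then kill the double sum using Lemma \ref{lem: complexlogform123}(2). No gaps.
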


\begin{proof} 
We can replace $\nu_{D}, \nu_{E}$ with $\nu^{\sigma}_{D}, \nu^{\sigma}_{E}$ in Proposition  \ref{prop: DEsigmaformula}, since determinants are invariant under row and column operations. 
All terms in the sum on the right-hand side of that formula vanish by Lemma \ref{lem: complexlogform123} (2).
\end{proof}

The proof shows that we also have the `half-algebraic' formulae:  
$$\langle D, E \rangle^{\sigma} = - \frac{1}{2\pi i} \int_{X_{\sigma}(\CC)} \overline{\nu_D^{\sigma}} \wedge \nu_E =  - \frac{1}{2\pi i} \int_{X_{\sigma}(\CC)} \overline{\nu_D} \wedge \nu^{\sigma}_E\ . $$
 
\begin{remark}
We deduce from \eqref{heightassvintegral} that the normalised height pairing  $\langle D,E\rangle^\sigma$ is the classical archimedean height pairing of the divisors $D$ and $E$. 
Indeed, since $\nu_D^\sigma$ has purely imaginary periods, we can write 
$$\nu_D^\sigma+\overline{\nu_D^\sigma}=d(g_D^\sigma)$$ where $g_D^\sigma:X_\sigma(\CC)\backslash |D|\rightarrow \mathbb{R}$ is harmonic (it is classically called the Green's function of $D$). We can thus compute the integral \eqref{heightassvintegral} via the Cauchy--Stokes theorem (see \S\ref{par: cauchy stokes}, compare with \S\ref{par:example log}):
\begin{align*}
\langle D,E\rangle^\sigma & = -\frac{1}{2\pi i} \int_{X(\CC)} \overline{\nu_D^\sigma} \wedge \nu_E^\sigma \\
& = -\frac{1}{2\pi i}\int_{X(\CC)}(\nu_D^\sigma+\overline{\nu_D^\sigma})\wedge \nu_E^\sigma \\
& = -\frac{1}{2\pi i}\int_{X(\CC)} d(g_D^\sigma\nu_E^\sigma) \\
& = \sum_jn_j\,g_D^\sigma(\sigma(x_j))\ ,
\end{align*}
where $E=\sum_jn_jx_j$. This shows \emph{a posteriori} that $\langle D,E\rangle^\sigma$ is a real number, which is obvious from our definition only if $\sigma$ is a real embedding. 
\end{remark}

\subsection{Explicit single-valued multiple zeta values}\label{par: SVMZV} Let $n_1,\ldots, n_r\in \ZZ_{\geq 1}$ with $n_r\geq 2$. The corresponding  multiple zeta value is defined by  the convergent sum
$$\zeta(n_1,\ldots,n_r)=\sum_{1\leq k_1<\cdots <k_r}\frac{1}{k_1^{n_1}\cdots k_r^{n_r}}\ \ . $$
It  can be represented by the  integral
\begin{equation}
\label{MZV integral} \zeta(n_1,\ldots, n_r) = \int_{0< t_1<\cdots < t_n <1} \omega_{n_1,\ldots, n_r} 
\end{equation}
with $n= n_1+\cdots+ n_r$,  and 
$$\omega_{n_1,\ldots, n_r} = (-1)^r \frac{dt_1}{t_1-e_1} \wedge \cdots \wedge \frac{dt_n}{t_n-e_n}$$
where  $(e_1,\ldots, e_n) = (1, 0^{n_{1}-1}, 1, 0^{n_{2}-1}, \ldots, 1, 0^{n_r-1})$ and where $0^k$ denotes a string of $0$'s of length $k$. It  lifts to a  motivic multiple zeta value \cite{brownSVMZV}: 
$$\zeta^{\mm}(n_1,\ldots, n_r) = [ \mathcal{O}(\pi_1^{\mathrm{mot}} (\PP^1 \backslash \{0,1,\infty\},\overset{\rightarrow}{1}_0, -\overset{\rightarrow}{1}_1)), \mathrm{dch}, \omega_{n_1,\ldots, n_r}]^{\mm} $$ 
where $\mathcal{O}(\pi_1^{\mathrm{mot}} (\PP^1 \backslash \{0,1,\infty\},\overset{\rightarrow}{1}_0, -\overset{\rightarrow}{1}_1))$ is the motivic fundamental groupoid of the projective line minus three points \cite{deligneP1, delignegoncharov} relative to tangential basepoints at $0$ and $1$, and  $\mathrm{dch}$ denotes the image of the straight line path from $0$ to $1$.  The motivic multiple zeta value $\zeta^{\mm}(n_1,\ldots, n_r)$
 lies in $ \mathcal{P}^{\mm}_{\MT(\mathbb{Z})}$, where $\MT(\ZZ)\subset \MT(\QQ)$ is the Tannakian category of mixed Tate motives over $\ZZ$, and its image under the period homomorphism is  the number $\zeta(n_1,\ldots, n_r).$
 
 Since the underlying motive is both effective and separated (a fortiori it is mixed Tate and so all non-vanishing Hodge numbers are of type $(p,p)$),  one can define the de Rham versions $\zeta^{\mm,\dR} = \pi^{\mm,\dR} \zeta^{\mm}$,  where $\pi^{\mm,\dR}$  denotes the    de Rham projection  of Definition \ref{defi:dR projection}.  The single-valued multiple zeta values \cite{brownSVMZV} are defined to be 
$$\zeta^{\s}(n_1,\ldots, n_r) = \s\,   \zeta^{\mm,\dR}(n_1,\ldots, n_r ) \ , $$
and arise naturally in a variety of contexts. They were actually defined using the variant $\sv$ of the single-valued period map, which produces the same result in view of Remark \ref{rem: s vs sv MT}.) 
Although there are several different recipes for computing these numbers (for example, as the values at $1$ of single-valued multiple polylogarithms) they are  far from  explicit and require solving a complicated equation involving the Drinfeld associator.

However, we can now write down an explicit formula for the $\zeta^{\s}$ using Theorem \ref{thm:sv recipe}. 
For convenience, we shall actually use a different interpretation of the integrals \eqref{MZV integral},  due  to
 Goncharov and Manin,  as  periods of motives of the moduli space $\mathcal{M}_{0,n+3}$ \cite{goncharovmanin}. 
  The corresponding motivic periods  are no doubt the same as  the motivic multiple zeta values defined via the  motivic fundamental groupoid of the projective line minus 3 points, although this has not been written explicitly in the literature.  Nevertheless, we can also directly  compute the single-valued periods of the latter using a limiting  argument to deal with the tangential base points  (see  \cite[Section 6]{BDLauricella}).
It leads, as expected, to the same  formula.

\begin{theorem} The single-valued multiple zeta value is an integral:
$$\zeta^{\s}(n_1,\ldots, n_r) = \frac{ (-1)^{\frac{n(n+1)}{2}} }{ (2\pi i)^n} \int_{\CC^n} \frac{ d t_1 \wedge \cdots \wedge d t_n}{t_1(t_2-t_1)\cdots (t_n-t_{n-1})(1-t_n)  } \wedge \overline{\omega_{n_1,\ldots, n_r}} \ .$$
\end{theorem}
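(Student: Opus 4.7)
The plan is to realize $\zeta^{\mm}(n_1,\ldots,n_r)$ as an effective mixed Tate motivic period, compute its de Rham projection via the recipe of Corollary \ref{coro:c0vee}, and then apply Theorem \ref{thm:sv recipe}. Following Goncharov--Manin \cite{goncharovmanin}, choose a smooth projective $X$ of dimension $n$ over $\QQ$ (a suitable iterated blow-up of $(\PP^1)^n$ associated to a compactification of $\mathcal{M}_{0,n+3}$) with a normal crossing divisor $A \cup B$ of the type $(\star)_\QQ$, where $A$ contains the polar locus of $\omega_{n_1,\ldots,n_r}$ and $B$ is the boundary of the (closure of the) simplex $\Delta = \{0 < t_1 < \cdots < t_n < 1\}$, namely the closures of $t_1=0$, $t_{i+1}=t_i$ for $1\leq i\leq n-1$, and $t_n=1$. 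Then $H=H^n(X \min A \lmod B)$ is mixed Tate, hence effective and separated, with $\omega_{n_1,\ldots,n_r} \in \Gamma(X,\Omega^n_{X/\QQ}(\log A))$, and the simplex yields a class $[\Delta] \in H_n^\B(X\min A \lmod B)$ such that
$$\zeta^{\mm}(n_1,\ldots, n_r) = [H, [\Delta], [\omega_{n_1,\ldots,n_r}]]^{\mm}.$$

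Applying the de Rham projection of Definition \ref{defi:dR projection} gives
$\zeta^{\mdR}(n_1,\ldots, n_r) = [H, c_0^\vee([\Delta]), [\omega_{n_1,\ldots,n_r}]]^{\mdR},$
so the first task is to identify $c_0^\vee([\Delta])$ explicitly. By Remark \ref{rem:c0vee general} we may use Corollary \ref{coro:c0vee} applied to $(X,B)$. I claim that
$$\tilde\nu \;:=\; \frac{dt_1 \wedge \cdots \wedge dt_n}{t_1(t_2-t_1)\cdots(t_n-t_{n-1})(1-t_n)}$$
is the unique global section of $\Omega^n_{X/\QQ}(\log B)$, up to the normalisation in the corollary, that solves the residue condition. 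Indeed $\tilde\nu$ manifestly has logarithmic singularities along $B$ only; and at each of the $n+1$ vertices of $\Delta$ (viewed as a codimension $n$ stratum of $B$) one checks that its iterated Poincar\'{e}--Leray residues equal $\pm (2\pi i)^n$, while the iterated partial boundaries of $[\Delta]$ compute to $\pm 1$ with signs matched by Corollary \ref{coro:c0vee}. This yields
$$c_0^\vee([\Delta]) = (-1)^{n(n-1)/2}\,(2\pi i)^{-n}\,[\tilde\nu].$$

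Finally, since $\tilde\nu \in \Gamma(X,\Omega^n_{X/\QQ}(\log B))$ and $\omega_{n_1,\ldots,n_r} \in \Gamma(X,\Omega^n_{X/\QQ}(\log A))$, both forms satisfy the hypotheses of Theorem \ref{thm:sv recipe}, giving
$$\zeta^{\sv}(n_1,\ldots,n_r) \;=\; \s\,\zeta^{\mdR}(n_1,\ldots,n_r) \;=\; \frac{(-1)^{n(n-1)/2}}{(2\pi i)^n}\int_{X(\CC)} \tilde\nu \wedge \overline{\omega_{n_1,\ldots,n_r}}.$$
By Lemma \ref{lem:wedge product integrable} the integrand is polar-smooth on $(X,A\cup B)$, hence absolutely integrable, and the complement of the dense open $\CC^n \subset X(\CC)$ is a proper analytic subset of measure zero, so the integral over $X(\CC)$ coincides with the integral over $\CC^n$ written in the statement.

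The main obstacle is the sign bookkeeping in the middle step: one must verify that the iterated residues of $\tilde\nu$ at a chosen vertex equal $+(2\pi i)^n$, that the corresponding iterated partial boundary of $[\Delta]$ equals $+1$, and that these choices are consistent across all $n+1$ vertices modulo the symmetry signs of Corollary \ref{coro:c0vee}. A secondary subtlety is the choice of compactification $X$ in the first step; one relies on the Goncharov--Manin blow-up to ensure that $A$ and $B$ are disjoint normal crossing divisors, which is standard but genuinely needed for $\omega_{n_1,\ldots,n_r}$ to define a class in $H$.
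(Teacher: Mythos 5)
Your strategy is the same as the paper's: realise $\zeta^{\mm}(n_1,\ldots,n_r)$ as a Goncharov--Manin period of $H^n(\overline{\mathcal{M}}_{0,n+3}\min A\lmod B)$ \cite{goncharovmanin}, compute the de Rham projection of the simplex class, and conclude with Theorem \ref{thm:sv recipe}; your treatment of convergence (polar-smoothness) and of replacing $X(\CC)$ by the full-measure open $\CC^n$ is fine. However, the derivation only lands on the stated formula because of two compensating sign errors, located exactly in the step you flag as the main obstacle.

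First, your value $c_0^\vee([\Delta])=(-1)^{n(n-1)/2}(2\pi i)^{-n}[\tilde\nu]$ is off by $(-1)^n$: the correct sign is $(-1)^{n(n+1)/2}$, as quoted in the paper from \cite[\S 2]{BD2}. The point is that in Corollary \ref{coro:c0vee} the iterated residue of $(2\pi i)^{-n}\tilde\nu$ and the iterated partial boundary of $[\Delta]$ at a given vertex do \emph{not} coincide; their ratio is $(-1)^n$, exactly as in the model computation of \S\ref{ex: dR projection P1 powers} for $(\PP^1)^n$, where $\partial_{z_n=1}\cdots\partial_{z_1=1}[0,1]^n=+1$ while the iterated residue of the normalised form is $(-1)^n$. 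The case $n=1$ already refutes your sign: by Example \ref{ex:dR projection P1}, $c_0^\vee([\gamma_{0,1}])=\frac{1}{2\pi i}\,d\log\frac{z-1}{z}=-\frac{1}{2\pi i}\frac{dz}{z(1-z)}$, i.e.\ the coefficient is $-1=(-1)^{n(n+1)/2}$ rather than $+1=(-1)^{n(n-1)/2}$. Second, you write $\zeta^{\sv}=\s\,\zeta^{\mdR}$, but the definition in \S\ref{par: SVMZV} uses the mixed Tate single-valued map $\sv=\tau(-1)\s$ of \S\ref{par:MT case}, not $\s$; the paper's proof inserts the resulting extra factor $(-1)^n$ explicitly. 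Each omission is a factor of $(-1)^n$, so they cancel and your final formula is correct, but both steps must be repaired (or the two sign conventions tracked honestly against each other) for the argument to be valid as a proof of a sign-sensitive identity.
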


\begin{proof}
Using the geometric interpretation of \cite{goncharovmanin}, 
we can define the  motivic multiple zeta value to be:
$$\zeta^{\mm} (n_1,\ldots, n_r)= [H^n(\overline{\mathcal{M}}_{0,n+3}\backslash A_{n_1,\ldots,n_r}, B), [X^{\delta}], [\omega_{n_1,\ldots, n_r} ]]^{\mm}\ ,$$
where $A_{n_1,\ldots,n_r}$ and $B$ are divisors in $\overline{\mathcal{M}}_{0,n+3}$ such that $(\overline{\mathcal{M}}_{0,n+3},A_{n_1,\ldots,n_r},B)$ satisfies $(\star)_\QQ$. It is proved in \cite{goncharovmanin} that the corresponding relative cohomology group is the realisation of a mixed Tate motive over $\mathbb{Z}$. Here $[X^\delta]$ is the homology class of the topological closure in $\overline{\mathcal{M}}_{0,n+3}(\mathbb{R})$ of the locus $\{0<t_1<\cdots <t_n<1\}$, whose image under the projection $c_0^\vee$ is computed in \cite[\S 2]{BD2}. It is given in coordinates by the following form with logarithmic singularities along $B$:
$$\nu = (-1)^{\frac{n(n+1)}{2}} \prod_{i=0}^n (t_{i+1} - t_{i})^{-1} dt_1\wedge \cdots \wedge dt_n \ ,$$
where $t_0=0$, $t_{n+1}=1$. It follows from Theorem \ref{thm:sv recipe} that
$$ \zeta^{\s}(n_1,\ldots, n_r)=  \frac{1}{(2\pi i)^n } \int_{\overline{\mathcal{M}}_{0,n+3}(\CC)} \nu \wedge \overline{\omega_{n_1,\ldots, n_r}}   \ .$$
Since $\CC^n \supset \mathcal{M}_{0,n+3}(\CC) \subset \overline{\mathcal{M}}_{0,n+3}(\CC)$ all differ by  sets of Lebesgue measure zero, the integral above can be viewed as an integral on $\CC$, and the result follows.
\end{proof}

\begin{remark} We know that $\zeta^{\s}(2)=0$. It would be interesting to prove directly via elementary means that the following  integral vanishes: 
$$\zeta^{\s}(2) = -\frac{1}{(2\pi i)^2}\int_{\CC^2} \frac{dt_1\wedge dt_2}{t_1(t_2-t_1)(1-t_2)} \wedge \frac{d\overline{t}_1\wedge d\overline{t}_2}{(1-\overline{t}_1)\overline{t}_2} \ ,$$
and similarly for more general families of (motivic) multiple zeta values which are known to evaluate to multiples of powers of $2\pi i$. 
\end{remark}

\bibliographystyle{alpha}

\bibliography{biblio}

\end{document}